\chardef\bslash=`\\ 
\def\verbatim{\interlinepenalty\@M \@verbatim
  \leftskip\@totalleftmargin\advance\leftskip2pc
  \frenchspacing\@vobeyspaces \@xverbatim}
  \def\dgt@k{\dg@DX=-3 \dg@DY=2 \dg@SIZE=3}
  \def\dgt@kk{\dg@DX=3 \dg@DY=-1 \dg@SIZE=3}%
\theoremstyle{plain}
\newtheorem{thm}{Theorem}[section]
\newtheorem{cor}[thm]{Corollary}
\newtheorem{lem}[thm]{Lemma}
\newtheorem{pro}[thm]{Proposition}
\theoremstyle{definition}
\numberwithin{equation}{section}
\newcounter{rmnum}
\def\symbolnote#1#2{\let\thefootn=\thefootnote%
\renewcommand{\thefootnote}{\fnsymbol{footnote}}%
\footnotemark[#1]%
\footnotetext[#1]{#2}%
\let\thefootnote=\thefootn
}
\newfont{\bbb}{msbm10 scaled \magstep1}
\newfont{\bbc}{msbm8 scaled \magstep0}
\newcommand{\R}{\mbox{\bbb R}}
\begin{document}


\title{Linear operators with compact supports, probability measures and Milyutin maps}

\author{Vesko Valov}
\address{Department of Computer Science and Mathematics, Nipissing University,
100 College Drive, P.O. Box 5002, North Bay, ON, P1B 8L7, Canada}
\email{veskov@nipissingu.ca}
\thanks{The author was partially supported by his NSERC grant 261914-08.}

\keywords{$AE(0)$-spaces, linear maps with compact supports,
probability measures, Milyutin maps, regularly extension operators,
regular averaging operators.}

\subjclass{Primary: 28A33; 46E27; 54C10; 54C35.}


\begin{abstract}
The notion of a regular operator with compact supports between
function spaces is introduced. On that base we obtain a
characterization of absolute extensors for $0$-dimensional spaces in
terms of regular extension operators having compact supports.
Milyutin maps are also considered and it is established that some
topological properties, like paracompactness, metrizability and
$k$-metrizability, are preserved under Milyutin maps.
\end{abstract}

\maketitle

\markboth{V.~Valov}{Probability measures}


\section{Introduction}

In this paper we assume that all topological spaces are Tychonoff.
The main concept is that one of a linear map between function spaces
with compact supports. Let $u\colon C(X,E)\to C(Y,E)$ be a linear
map, where $C(X,E)$ is the set of all continuous functions from $X$
into a locally convex linear space $E$. We say that $u$ {\em has
compact supports} if for every $y\in Y$ the linear map $T(y)\colon
C(X,E)\to E$, defined by $T(y)(h)=u(h)(y)$, $h\in C(X,E)$, has a
compact support in $X$. Here, the support of a linear map $\mu\colon
C(X,E)\to E$ is the set $s(\mu)$ of all $x\in\beta X$ such that for
every neighborhood $U$ of $x$ in $\beta X$ there exists $h\in
C(X,E)$ with $(\beta h)(\beta X-U)=0$ and $\mu(h)\neq 0$. Recall
that $\beta X$ is the \v{C}ech-Stone compactification of $X$ and
$\beta h\colon\beta X\to \beta E$ the extension of $h$. Obviously,
$s(\mu)\subset \beta X$ is closed, so compact. When $s(\mu)\subset
X$, $\mu$ is said to have a compact support. In a similar way we
define a linear map with compact supports when consider the bounded
function sets $C^*(X,E)$ and $C^*(Y,E)$ (if $E$ is the real line
$\mathbb R$, we simply write $C(X)$ and $C^*(X)$). If all $T(y)$ are
{\em regular linear maps}, i.e., $T(y)(h)$ is contained in the
closed convex hull $\overline{~conv h(X)}$ of $h(X)$ in $E$, then
$u$ is called a {\em regular operator}.

Haydon \cite{ha} proved that Dugundji spaces introduced by
Pelczynski \cite{p} coincides with the absolute extensors for
$0$-dimensional compact spaces (br., $X\in AE(0)$). Later Chigogidze
\cite{ch2} provided a more general definition of $AE(0)$-spaces  in
the class of all Tychonoff spaces. The notion of linear operators
with compact supports arose from the attempt to find a
characterization of $AE(0)$-spaces similar to the Pelczynski
definition of Dugundji spaces. Here is this characterization (see
Theorems 4.1-4.2). {\em For any space $X$ the following conditions
are equivalent: $(i)$ $X$ is an $AE(0)$-space; $(ii)$ for every
$C$-embedding of $X$ in a space $Y$ there exists a regular extension
operator $u\colon C(X)\to C(Y)$ with compact supports; $(iii)$ for
every $C$-embedding of $X$ in a space $Y$ there exists a regular
extension operator $u\colon C^*(X)\to C^*(Y)$ with compact supports;
$(iv)$ for any $C$-embedding of $X$ in a space $Y$ and any complete
locally convex space $E$ there exists a regular extension operator
$u\colon C^*(X,E)\to C^*(Y,E)$ with compact supports}.

It is easily seen that $u\colon C(X,E)\to C(Y,E)$ (resp., $u\colon
C^*(X,E)\to C^*(Y,E)$) is a regular extension operator with compact
supports iff there exists a continuous map $T\colon Y\to P_c(X,E)$
(resp., $T\colon Y\to P_c^*(X,E)$) such that $T(y)$ is the Dirac
measure $\delta_y$ at $y$ for all $y\in X$. Here, $P_c(X,E)$ (resp.,
$P_c^*(X,E)$) is the space of all regular linear maps $\mu:C(X,E)\to
E$ (resp., $\mu:C^*(X,E)\to E$) with compact supports equipped with
the pointwise convergence topology (we write $P_c(X)$ and $P_c^*(X)$
when $E=\mathbb R$). Section 2 is devoted to properties of the
functors $P_c$ and $P_c^*$ (actually, $P_c^*$ is the well known
functor $P_\beta$ \cite{ch1} of all probability measures on $\beta
X$ whose supports are contained in $X$). It appears that {\em
$P_c(X)$ is homeomorphic to the closed convex hull of $e_X(X)$ in
$\mathbb R^{C(X)}$ provided $X$ is realcompact, where $e_X$ is the
standard embedding of $X$ into $\mathbb R^{C(X)}$} (Proposition
2.4), and {\em $P_c(X)$ is metrizable iff $X$ is a metric compactum}
(Proposition 2.5(ii)).

In Section 3 we consider regular averaging operators with compact
support and Milyutin maps. Milyutin maps between compact spaces were
introduced by Pelczynski \cite{p}. There are different definitions
of Milyutin maps in the non-compact case, see \cite{at}, \cite{rss}
and \cite{vv4}. We say that a surjection $f\colon X\to Y$ is {\em a
Milyutin map} if $f$ admits a regular averaging operator $u:C(X)\to
C(Y)$ having compact supports. This is equivalent to the existence
of a map $T:Y\to P_c(X)$ such that $f^{-1}(y)$ contains the support
of $T(y)$ for all $y\in Y$. It is shown, for example, that {\em for
every product $Y$ of metric spaces there is a $0$-dimensional
product $X$ of metric spaces and a perfect Milyutin map $f\colon
X\to Y$} (Corollary 3.10). Moreover, {\em every $p$-paracompact
space is an image under a perfect Milyutin map of a $0$-dimensional
$p$-paracompact space} (Corollary 3.11).

In the last Section 5 we prove that some topological properties are
preserved under Milyutin maps. These properties include
paracompactness, collectionwise normality, (complete) metrizability,
stratifiability, $\delta$-metrizability and $k$-metrizability. In
particular, we provide a positive answer to a question of Shchepin
\cite{sc3} whether {\em every $AE(0)$-space is $k$-metrizable} (see
Corollary 5.5).

Some of the result presented here were announced in \cite{vv3}
without proofs.


\section{Measure spaces}
Everywhere in this section $E,F$ stand for locally convex linear
topological spaces and $C(X,E)$ is the set of all continuous maps
from a space $X$ into $E$. By $C^*(X,E)$ we denote the bounded
elements of $C(X,E)$. Let $\mu\colon C(X,E)\to F$ (resp., $\mu\colon
C^*(X,E)\to F$) be a linear map. The support of $\mu$ is defined as
the set $s(\mu)$ (resp., $s^*(\mu)$) of all $x\in\beta X$ such that
for every neighborhood $U$ of $x$ in $\beta X$ there exists $f\in
C(X,E)$ (resp., $f\in C^*(X,E)$) with $(\beta f)(\beta X-U)=0$ and
$\mu(f)\neq 0$, see \cite {vv1}. Obviously, $s(\mu)$ and $s^*(\mu)$
are closed in $\beta X$, so compact. Let us note that in the above
definition $(\beta f)(\beta X-U)=0$ is equivalent to $f(X-U)=0$. We
also use $s^*(\mu)$ to denote the support of the restriction
$\mu|C^*(C,E)$ when $\mu$ is defined on $C(X,E)$ (in this case we
have $s^*(\mu)\subset s(\mu)$).

\begin{lem}
Let $\mu$ be a linear map from $C(X,E)$  $($resp., from
$C^*(X,E)$$)$ into F, where $E$ and $F$ are norm spaces.

\begin{itemize}
\item[(i)]  If $V$ a neighborhood of $s(\mu)$ $($resp., $s^*(\mu)$$)$, then
$\mu(f)=0$ for every $f\in C(X,E)$ $($resp., $f\in C^*(X,E)$$)$
with $(\beta f)(V)=0$.

\item[(ii)]
If the restriction $\mu|C^{*}(X,E)$ is continuous when $C^{*}(X,E)$
is equipped with the uniform topology, then $\mu(f)=0$ provided
$f\in C(X,E)$ $($resp., $f\in C^*(X,E)$$)$ and $(\beta f)(s(\mu))=0$
$($resp., $(\beta f)(s^*(\mu))=0$$)$.

\item[(iii)] In each of the following two cases $s(\mu)$ coincides with $s^*(\mu)$:
either $s(\mu)\subset X$ or $\mu$ is a non-negative linear
functional on $C(X)$.
\end{itemize}
\end{lem}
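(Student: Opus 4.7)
For part (i), the plan is a partition-of-unity argument on $\beta X$. For each $y\in\beta X\setminus V$ one has $y\notin s(\mu)$, so the definition of the support yields an open neighborhood $U_y$ of $y$ in $\beta X$ such that $\mu(g)=0$ whenever $(\beta g)(\beta X\setminus U_y)=0$. Shrink each $U_y$ to $U_y'$ with $\overline{U_y'}\subset U_y$, extract a finite subcover $U_{y_1}',\ldots,U_{y_n}'$ of the compact set $\beta X\setminus V$, and choose a continuous partition of unity $\phi_0,\phi_1,\ldots,\phi_n$ on $\beta X$ subordinate to $\{V,U_{y_1}',\ldots,U_{y_n}'\}$. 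Decomposing $f=\phi_0 f+\sum_i \phi_i f$ on $X$: the first summand vanishes because $f\equiv 0$ on $V\cap X$ while $\mathrm{supp}\,\phi_0\subset V$; for each $i$, $\phi_i f$ is zero on $X\setminus U_{y_i}'$, which by the shrinking is dense in $\beta X\setminus U_{y_i}$, forcing $\beta(\phi_i f)=0$ on $\beta X\setminus U_{y_i}$ and hence $\mu(\phi_i f)=0$ by the choice of $U_{y_i}$.

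For part (ii), given $\epsilon>0$, the open set $W_\epsilon$ on which the bounded continuous extension of $\|f\|\wedge 1$ to $\beta X$ is less than $\epsilon$ contains $s(\mu)$, since $(\beta f)(s(\mu))=0$ makes $\|f\|\wedge 1$ vanish on $s(\mu)$. Choose $W'$ with $s(\mu)\subset W'\subset\overline{W'}\subset W_\epsilon$ and a continuous $\phi:\beta X\to[0,1]$ equal to $0$ on $W'$ and $1$ off $W_\epsilon$. Then $\phi f$ vanishes on the neighborhood $W'$ of $s(\mu)$, so (i) gives $\mu(\phi f)=0$, while $(1-\phi)f\in C^*(X,E)$ has sup-norm at most $\epsilon$, so the continuity hypothesis on $\mu|_{C^*(X,E)}$ yields $\|\mu(f)\|=\|\mu((1-\phi)f)\|\le C\epsilon$. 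Letting $\epsilon\to 0$ finishes the argument.

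For part (iii), the inclusion $s^*(\mu)\subset s(\mu)$ is immediate. When $s(\mu)\subset X$, fix $x\in s(\mu)$ and a neighborhood $V$ of $x$, shrink to $V'$ with $\overline{V'}\subset V$, and select $g\in C(X,E)$ with $(\beta g)(\beta X\setminus V')=0$ and $\mu(g)\ne 0$; compactness of $s(\mu)$ in $X$ gives $M:=\sup_{s(\mu)}\|g\|<\infty$, and Tychonoff-type separation of the compact set $s(\mu)$ from the closed set $\{\|g\|\ge M+1\}$ yields a continuous $\psi:X\to[0,1]$ that equals $1$ on an open neighborhood of $s(\mu)$ and vanishes off $\{\|g\|<M+1\}$. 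Then $\psi g\in C^*(X,E)$ is bounded by $M+1$, $\beta(\psi g)$ vanishes off $V$ (because $\psi g=0$ on $X\setminus V'$ and by the shrinking), and $(1-\psi)g$ vanishes on a $\beta X$-neighborhood of $s(\mu)$; part (i) then gives $\mu((1-\psi)g)=0$, whence $\mu(\psi g)=\mu(g)\ne 0$ and $\psi g$ witnesses $x\in s^*(\mu)$. When $\mu\ge 0$ on $C(X)$, one route is to first verify the classical fact that positive linear functionals on $C(X)$ (defined throughout) have support contained in $X$, reducing to the previous case; alternatively $\mu|_{C^*(X)}$ is positive, hence bounded, and represented via Riesz by a Radon measure $\nu$ on $\beta X$ with support $s^*(\mu)$, and if $x\in s(\mu)\setminus s^*(\mu)$ then for a neighborhood $U$ of $x$ with $\nu(U)=0$ and a non-negative $g\in C(X)$ with $(\beta g)(\beta X\setminus U)=0$ and $\mu(g)\ne 0$, each truncation satisfies $\mu(g\wedge N)=\int\beta(g\wedge N)\,d\nu=0$, and a monotone-convergence/Daniell-Stone step yields $\mu(g)=0$, a contradiction.

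The principal technical obstacle throughout is passing from the vanishing of $\phi_i f$ or $\psi g$ on a subset of $X$ to the vanishing of $\beta(\phi_i f)$ or $\beta(\psi g)$ on the corresponding closed subset of $\beta X$; this is handled uniformly by the shrinking step $\overline{U'}\subset U$, which ensures $X\setminus U'$ is dense in $\beta X\setminus U$, and the subtlest ingredient is the sigma-continuity argument used in the second case of (iii).
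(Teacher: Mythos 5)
Parts (i) and (ii) of your argument are correct and in fact more self-contained than the paper, which simply cites Lemma 2.1 of \cite{vv1} for these items; the partition-of-unity and cutoff arguments, together with the shrinking device $\overline{U'}\subset U$ used to pass from vanishing of a function on $X\setminus U'$ to vanishing of its $\beta$-extension off $U$, are exactly what is needed. The first case of (iii) (when $s(\mu)\subset X$) is also correct and is essentially the paper's proof: where you truncate $g$ by a scalar cutoff $\psi$ supported in $\{\|g\|<M+1\}$, the paper composes $g$ with a retraction of $E$ onto a ball of radius exceeding $\sup_{s(\mu)}\|g\|$; both produce a bounded map agreeing with $g$ on a neighborhood of $s(\mu)$, and part (i) transfers $\mu(g)\neq 0$ to the truncation.

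The gap is in the second case of (iii). Your first route rests on the claim that positive linear functionals on $C(X)$ automatically have support contained in $X$; that is false without realcompactness (for $X=[0,\omega_1)$ one has $C(X)=C([0,\omega_1])$, and $\delta_{\omega_1}$ is a positive functional whose support is $\{\omega_1\}\not\subset X$ --- the paper invokes Hewitt's Theorem 18 of \cite{he} only under a realcompactness hypothesis, in Proposition 2.4). Your second route is the right strategy, but its last sentence --- ``a monotone-convergence/Daniell--Stone step yields $\mu(g)=0$'' --- assumes precisely what must be proved. An abstract positive linear functional on $C(X)$ is not given as an integral on unbounded functions, so monotone convergence does not apply to it, and the Daniell--Stone theorem lists $\sigma$-continuity (equivalently, $\mu(g)=\sup_N\mu(g\wedge N)$ for $g\geq 0$) among its hypotheses. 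Establishing that continuity is the entire content of this case, and the paper does it by Hewitt's device: choosing attained values $t_n=f(x_n)\uparrow\infty$, letting $f_n$ be the successive truncation increments (so that $f_1+\dots+f_n=f\wedge t_n$) and $h=\sum_n t_nf_n\in C(X)$, one obtains $t_n\bigl(f-f\wedge t_n\bigr)\leq h$, whence $0\leq\mu(f)-\mu(f\wedge t_n)\leq\mu(h)/t_n\to 0$ by positivity. Without this (or an equivalent) construction the argument for the second case is incomplete.
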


\begin{proof}
When $\mu$ is a linear map on $C(X,E)$, items (i) and (ii) were
established in \cite[Lemma 2.1]{vv1}; the case when $\mu$ is a
linear map on $C^{*}(X,E)$ can be  done by similar arguments.  To
prove (iii), we first suppose that  $s(\mu)\subset X$.  Then
$s^*(\mu)$ is the support of the restriction $\mu|C^{*}(X,E)$ and
$s^*(\mu)\subset s(\mu)$. So, we need to show that $s(\mu)\subset
s^*(\mu)$. For a given point $x\in s(\mu)$ and its neighborhood $U$
in $\beta X$ there exists $g\in C(X,E)$ with $g(X-U)=0$ and
$\mu(g)\neq 0$.  Because $g(s(\mu))\subset E$ is compact, we can
find $\epsilon>0$ such that $s(\mu)$ is contained in the set
$W=\{y\in X: ||g(y)||<\epsilon\}$,  where $||.||$ denotes the norm
in $E$.  Let $B_{\epsilon}=\{z\in E: ||z||\leq\epsilon\}$ and
$r\colon E\to B_{\epsilon}$ be a retraction (i.e., a continuous map
with $r(z)=z$ for every $z\in B_{\epsilon}$).  Then $h(y)=g(y)$ for
every $y\in W$, where  $h=r\circ g$. Hence, choosing an open set $V$
in $\beta X$ such that $V\cap X=W$,   we have $(\beta(h-g))(V)=0$.
Since $V$ is a neighborhood of $s(\mu)$, by (i), $\mu(h)=\mu(g)\neq
0$. Therefore, we found a map $h\in C^{*}(X,E)$ such that $\beta
h(\beta X-U)=0$ and $\mu(h)\neq 0$. This means that $x\in s^*(\mu)$.
So, $s(\mu)=s^*(\mu)$.

\medskip
Now, let $E=F=\R$ and $\mu$ be a non-negative linear functional on
$C(X)$.  Suppose there exists $x\in s(\mu)$ but $x\not\in s^*(\mu)$.
Then, for some neighborhood $U$ of $x$ in $\beta X$, we have

\medskip\noindent
(1)\hspace{0.3cm}$\mu(h)=0$ for every $h\in C^{*}(X)$ with $h(X-U)=0$.

\medskip\noindent
Since $x\in s(\mu)$, there exists $f\in C(X)$ such that $f(X-U)=0$
and $\mu(f)\neq 0$. Now, we use an idea from \cite[proof of Theorem
1]{he}.  We represent $f$ as the sum $f^{+}+f^{-}$, where
$f^{+}=\max\{f,0\}$ and $f^{-}=\min\{f,0\}$. Since both $f^{+}$ and
$f^{-}$ are 0 outside $U$ and $\mu(f)=\mu(f^{+})+\mu(f^{-})\neq 0$
implies that at least one of the numbers $\mu(f^{+})$ and
$\mu(f^{-})$ is not 0, we can assume that $f\geq 0$. By (1), $f$ is
not bounded. Therefore, there is a sequence
$\displaystyle\{x_n\}\subset X$ such that
$\displaystyle\{t_n=f(x_n):n\geq 1\}$ is an increasing and unbounded
sequence. We set $\displaystyle t_0=0$ and for every $n\geq 1$
define the function $\displaystyle f_n\in C^{*}(X)$ as follows:
$\displaystyle f_n(x)=0$ if $\displaystyle f(x)\leq t_{n-1}$,
$\displaystyle f_n(x)=f(x)-t_{n-1}$ if $\displaystyle
t_{n-1}<f(x)\leq t_n$ and $\displaystyle f_n(x)=t_n-t_{n-1}$
provided $\displaystyle f(x)>t_n$. Let also $\displaystyle
h_n=t_n\cdot f_n$ and $\displaystyle h=\sum_{n=1}^{\infty}h_n$. Then
$h$ is continuous and for every $n\geq 1$ we have

\medskip\noindent
(2)\hspace{0.3cm}$\displaystyle t_n\big(f-f_1-f_2-...-f_n\big)\leq h-h_1-h_2-...-h_n$.
\medskip\noindent

Since all $\displaystyle f_n$ and $\displaystyle h_n$ are bounded
and continuous functions satisfying   $\displaystyle
f_n(X-U)=h_n(X-U)=0$,   it follows from (1) that
$\displaystyle\mu(h_n)=\mu(f_n)=0$, $n\geq 1$. So, by (2),
$\displaystyle t_n\cdot\mu(f)\leq\mu(h)$ for every $n$. Hence,
$\mu(f)=0$ which is a contradiction.  Therefore, $s(\mu)=s^*(\mu)$.
\end{proof}

We say that a linear map $\mu$ on $C(X,E)$  (resp., on $C^{*}(X,E)$)
has a {\it compact support} if $s(\mu)\subset X$ (resp.,
$s^*(\mu)\subset X$). If $\mu$ takes values in $E$, then it is
called {\it regular} provided $\mu(f)$ belongs to the closure of the
convex hull $conv~f(X)$ of $f(X)$ for every $f\in C(X,E)$ (resp.,
$f\in C^{*}(X,E)$).
Below, $C_k(X,E)$ (resp., $C_k^{*}(X,E)$)  stands for the space
$C(X,E)$ (resp. $C^{*}(X,E)$) with the compact-open topology.

\begin{pro}
Let $E$ be a norm space.  A regular linear map $\mu$ on $C(X,E)$
$($resp., $C^{*}(X,E)$$)$ has a compact support in $X$ if and only
if $\mu$ is continuous on $C_k(X,E)$ $($resp., $C^*_k(X,E)$$)$.
\end{pro}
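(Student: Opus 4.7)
I plan to establish both implications directly. The easy direction exploits the defining seminorms of the compact-open topology, while the harder direction reuses the retraction idea from the proof of Lemma 2.1(iii) to reduce the computation of $\mu(f)$ to that of a bounded function of small norm.

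For the implication ``continuity $\Rightarrow$ compact support,'' I would start from the fact that a continuous linear map into a normed space must be bounded with respect to some continuous seminorm on the source. Since the compact-open topology on $C_k(X,E)$ is generated by the seminorms $f\mapsto\sup_{x\in K}\|f(x)\|$ with $K\subset X$ compact, continuity of $\mu$ yields a compact $K\subset X$ and a constant $C>0$ with $\|\mu(f)\|\leq C\sup_{x\in K}\|f(x)\|$ for all $f$. I would then show $s(\mu)\subset K$ by contraposition: given $x^*\in\beta X\setminus K$, compactness of $K\subset X$ makes $K$ closed in $\beta X$, so there is an open neighborhood $U$ of $x^*$ in $\beta X$ with $U\cap K=\emptyset$; for any $f$ with $(\beta f)(\beta X-U)=0$ one has $f|_K\equiv 0$, hence $\mu(f)=0$, so $x^*\notin s(\mu)$.

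For the implication ``compact support $\Rightarrow$ continuity,'' I would fix $\epsilon>0$ and note that $W=\{f:\sup_{s(\mu)}\|f\|<\epsilon\}$ is a neighborhood of $0$ in $C_k(X,E)$ because $s(\mu)\subset X$ is compact. For $f\in W$, the set $V'=\{x\in X:\|f(x)\|<\epsilon\}$ is an open subset of $X$ containing $s(\mu)$, and the open set $V=\beta X\setminus\overline{X\setminus V'}^{\beta X}$ of $\beta X$ satisfies $V\cap X=V'$ and $V\supset s(\mu)$. Picking a retraction $r\colon E\to B_\epsilon$ onto the closed $\epsilon$-ball (for instance, the radial projection) and setting $h=r\circ f$, one has $h=f$ on $V'$, so $(\beta(f-h))(V)=0$; Lemma 2.1(i) then gives $\mu(f)=\mu(h)$, and the regularity of $\mu$ forces $\mu(h)\in\overline{\conv\,h(X)}\subset B_\epsilon$. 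Thus $\|\mu(f)\|\leq\epsilon$, which is the required continuity of $\mu$ at $0$; linearity extends this everywhere.

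The only real subtlety is verifying that $s(\mu)$ is contained in the open set $V\subset\beta X$ produced from $V'$; this uses compactness of $s(\mu)\subset X$ together with the fact that a closed subset of $X$ has $\beta X$-closure meeting $X$ in itself. The $C^{*}(X,E)$ version of the proposition requires no new idea: the same argument runs with $s^{*}(\mu)$ in place of $s(\mu)$ and the corresponding part of Lemma 2.1(i), and since $h=r\circ f$ is automatically bounded, the construction remains inside $C^{*}(X,E)$.
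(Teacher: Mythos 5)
Your proof is correct, and the two directions split differently against the paper's. For ``continuity implies compact support'' you and the paper do essentially the same thing: extract a single compact $K$ controlling $\mu$ from a basic neighborhood of $0$ in $C_k(X,E)$ and show any $f$ supported off a neighborhood of a point outside $K$ is killed by $\mu$ (the paper scales by an integer $k$ to force a contradiction where you invoke the seminorm bound $\|\mu(f)\|\le C\sup_K\|f\|$; same content). For ``compact support implies continuity'' your route is genuinely different. The paper first uses regularity to get $\|\mu(f)\|\le\|f\|$ on $C^*(X,E)$, invokes Lemma 2.1(ii) to conclude $\mu(f)$ depends only on $f|H$ with $H=s(\mu)$, factors $\mu=\nu\circ\pi_H$ through the restriction map onto $C_k(H,E)$ (which requires extending maps from $H$ to $X$), and then proves a separate claim that $\mu(f)\in\overline{\hbox{\rm conv}\,f(H)}$ to get continuity of $\nu$. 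You instead prove the estimate $\|\mu(f)\|\le\epsilon$ for $\sup_{s(\mu)}\|f\|<\epsilon$ directly: replace $f$ by $h=r\circ f$ with $r$ the radial retraction onto the $\epsilon$-ball, note $f-h$ vanishes on a $\beta X$-neighborhood of $s(\mu)$, apply Lemma 2.1(i), and use regularity to place $\mu(h)$ in $B_\epsilon$. This is the retraction trick the paper uses in Lemma 2.1(iii), redeployed here; it buys you a shorter argument that needs only Lemma 2.1(i) (valid for arbitrary linear maps) rather than Lemma 2.1(ii), and it avoids both the extension of $E$-valued maps off the compact set $H$ and the auxiliary Claim 1. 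The point you flag as the only subtlety --- that $V=\beta X\setminus\overline{X\setminus V'}$ is open, meets $X$ in $V'$, and contains $s(\mu)$ because a closed subset of $X$ has $\beta X$-closure meeting $X$ in itself --- is handled correctly, and your remark that the $C^*$ case goes through verbatim because $r\circ f$ is bounded is accurate.
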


\begin{proof}
We consider only the case when $\mu$ is a map on $C(X,E)$, the other
one is similar. Suppose $s(\mu)=H\subset X$. Since $\mu$ is regular,
$\mu(f)\in\overline{conv~f(X)}$ for every $f\in C(X,E)$. This yields
$||\mu(f)||\leq ||f||$, $f\in C^{*}(X,E)$. Hence, the restriction
$\mu|C^{*}(X,E)$ is continuous with respect to the uniform topology.
So, by Lemma 2.1(ii), for every $f\in C(X,E)$ the value $\mu(f)$
depends only on the restriction $f|H$. Therefore, the linear map
$\nu\colon C(H,E)\to E$, $\nu(g)=\mu(\widetilde{g})$, where
$\widetilde{g}\in C(X,E)$ is any continuous extension of $g$, is
well defined. Note that such an extension $\widetilde{g}$ always
exists because $H\subset X$ is compact. Moreover, the restriction
map $\pi_H\colon C_k(X,E)\to C_k(H,E)$ is surjective and continuous.
Since $\mu=\nu\circ\pi_H$, $\mu$ would be continuous provided
$\nu\colon C_k(H,E)\to E$ is  so. Next claim implies that for every
$g\in C(H,E)$ we have $\nu(g)\in\overline{conv~g(H)}$ and
$||\nu(g)||\leq ||g||$, which guarantee the continuity of $\nu$.

\medskip\noindent
{\em Claim $1$.  $\mu(f)\in\overline{conv~f(H)}$ for every $f\in C(X,E)$}\\

Indeed, if $\mu(f)\not\in\overline{conv~f(H)}$ for some $f\in
C(X,E)$, then we can find a closed convex neighborhood $W$ of
$\overline{conv~f(H)}$ in $E$  and a function $h\in C(X,E)$ such
that $\mu(f)\not\in W$, $h(X)\subset W$ and $h(x)=f(x)$ for all
$x\in H$. As it was shown above, the last equality implies
$\mu(f)=\mu(h)$. Hence, $\mu(f)=\mu(h)\in\overline{conv~h(X)}\subset
W$, which is a contradiction.

Now, suppose $\mu\colon C_k(X,E)\to E$ is continuous. Then there
exists a compact set $K\subset X$ and $\epsilon>0$ such that
$||\mu(f)||<1$ for every $f\in C(X,E)$ with $\sup\{||f(x)||:x\in
K\}<\epsilon$. We claim that $s(\mu)\subset K$. Indeed, otherwise
there would be $x\in s(\mu)-K$, a neighborhood $U$ of $x$ in $\beta
X$ with $U\cap K=\varnothing$, and a function $g\in C(X,E)$ such
that $g(X-U)=0$ and $\mu(g)\neq 0$. Choose an integer $k$ with
$||\mu(kg)||\geq 1$. On the other hand, $kg(x)=0$ for every $x\in
K$. Hence, $||\mu(kg)||<1$, a contradiction.
\end{proof}

Now, for every space $X$ and a locally convex space $E$ let
$P_c(X,E)$ (resp., $P_c^*(X,E)$) denote the set of all regular
linear maps $\mu\colon C(X,E)\to E$ (resp., $\mu\colon C^*(X,E)\to
E$) with compact supports equipped with the weak (i.e. pointwise)
topology with respect to $C(X,E)$ (resp., $C^*(X,E)$). If $E$ is the
real line, we write $P_c(X)$ (resp., $P_c^*(X)$) instead of
$P_c(X,\mathbb R)$ (resp., $P_c^*(X,\mathbb R)$). It is easily seen
that a linear map $\mu\colon C(X)\to\mathbb R$ (resp., $\mu\colon
C^*(X)\to\mathbb R$) is regular if and only if $\mu$ is non-negative
and $\mu(1)=1$. If $h\colon X\to Y$ is a continuous map, then there
exists a map $P_c(h)\colon P_c(X)\to P_c(Y)$ defined by
$P_c(h)(\mu)(f)=\mu(f\circ h)$, where $\mu\in P_c(X)$ and $f\in
C(Y)$. Considering functions $f\in C^*(Y)$ in the above formula, we
can define a map $P_c^*(h)\colon P_c^*(X)\to P_c^*(Y)$. It is easily
seen that $s(P_c(h)(\mu))\subset h(s(\mu))$ (resp.,
$s^*(P_c^*(h)(\mu))\subset h(s^*(\mu))$) for every $\mu\in P_c(X)$
(resp., $\mu\in P_c^*(X)$). Moreover, $P_c(h_2\circ
h_1)=P_c(h_2)\circ P_c(h_1)$ and $P_c^*(h_2\circ
h_1)=P_c^*(h_2)\circ P_c^*(h_1)$ for any two maps $h_1\colon X\to Y$
and $h_2\colon Y\to Z$. Therefore, both $P_c$ and $P_c^*$ are
covariant functors in the category of all Tychonoff spaces and
continuous maps. Let us also note that if $X$ is compact then
$P_c(X)$ and $P_c^*(X)$ coincide with the space $P(X)$ of all
probability measures on $X$.

For every $x\in X$ we consider the Dirac's measure $\delta_x\in
P_c(X,E)$ defined by $\delta_x(f)=f(x)$, $f\in C(X,E)$. In a similar
way we define $\delta^*_x\in P_c^*(X,E)$. We also consider the maps
$i_X\colon X\to P_c(X,E)$, $i_X(x)=\delta_x$, and $i_X^*\colon X\to
P_c^*(X,E)$, $i_X(x)=\delta_x^*$. Next proposition is an easy
exercise.

\begin{pro}
\begin{itemize} Let $h\colon X\to
Y$be a map.
\item[(i)] The map $i_X:X\to P_c(X)$ is a closed
$C$-embedding, and $i_X^*:X\to P_c^*(X)$ is a closed
$C^*$-embedding;
\item[(ii)] The map $P_c(h)$   is a $($closed$)$ $C$-embedding
provided $h$ is a $($closed$)$ $C$-embedding;
\item[(iii)] The map $P_c^*(h)$ is a $($closed$)$ $C^*$-embedding
provided $h$ is a $($closed$)$ $C^*$-embedding.
\end{itemize}
\end{pro}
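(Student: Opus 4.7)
My plan is to verify each part of the proposition by unpacking the definitions, using the Tychonoff property to move between functions on $X$ and the weak topology on $P_c(X)$, with Lemma~2.1 providing the support estimates throughout.

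For (i), I would first verify that $i_X$ is a topological embedding: continuity follows because the topology on $P_c(X)$ is generated by the evaluations $\widetilde f\colon\mu\mapsto\mu(f)$, $f\in C(X)$, and $\widetilde f\circ i_X=f$; injectivity and homeomorphism onto image both follow from Tychonoff-ness, which says that $C(X)$ separates points and induces the original topology on $X$. The $C$-embedding property is then witnessed by the $\widetilde f$, each a continuous extension of $f$ to $P_c(X)$. For closedness, suppose $\delta_{x_\alpha}\to\mu$ in $P_c(X)$; viewing the Dirac measures inside the compactum $P(\beta X)$ via restriction to $C^*(X)=C(\beta X)$, the Hausdorff compactness of $\beta X$ forces the net $(x_\alpha)$ to converge to some $p\in\beta X$ with $\delta_p|_{C^*(X)}=\mu|_{C^*(X)}$. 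Combining the compact-support hypothesis with Lemma~2.1(iii) yields $s(\mu)=s^*(\mu)=\{p\}\subset X$, so $p\in X$, and continuity of each $\widetilde f$ for $f\in C(X)$ then gives $\mu=i_X(p)$. The case of $i_X^*$ is strictly analogous.

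For (ii) and (iii), I would exploit that $h$ being a (closed) $C$-embedding (resp.\ $C^*$-embedding) is equivalent to surjectivity of the pullback $h^\ast\colon C(Y)\to C(X)$ (resp.\ $C^*(Y)\to C^*(X)$), $h^\ast(f)=f\circ h$. This immediately gives continuity of $P_c(h)$, injectivity (since $\mu_1(f\circ h)=\mu_2(f\circ h)$ for all $f\in C(Y)$ forces $\mu_1=\mu_2$), and the homeomorphism-onto-image property: each subbasic open set $\{\mu:\mu(g)\in U\}$ of $P_c(X)$ is the $P_c(h)$-preimage of $\{\nu:\nu(f)\in U\}$ for any $f\in C(Y)$ with $f\circ h=g$. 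The $C$-embedding of the image in $P_c(Y)$ then follows from the same identity: each generating evaluation $\widetilde g\in C(P_c(X))$ coincides with $\widetilde f\circ P_c(h)$ when $g=f\circ h$, and an arbitrary $\phi\in C(P_c(X))$ is extended by applying this generator-level correspondence on the compact sets $\{\mu:s(\mu)\subset K\}\cong P(K)$ that exhaust $P_c(X)$, on which Stone--Weierstrass shows the generators to be algebra-dense in the continuous functions.

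For the closedness statement, assume in addition that $h$ is closed, and let a net $\nu_\alpha\in P_c(h)(P_c(X))$ converge to $\nu\in P_c(Y)$. If some $y\in s(\nu)\setminus h(X)$ existed, closedness of $h(X)$ would produce an open $U\ni y$ with $U\cap h(X)=\varnothing$ and a $g\in C(Y)$ with $g(Y\setminus U)=0$ and $\nu(g)\neq 0$; each $\nu_\alpha$ has $s(\nu_\alpha)\subset h(X)\subset Y\setminus U$, so $g|_{s(\nu_\alpha)}=0$, and since regularity makes each $\nu_\alpha|_{C^*(Y)}$ uniformly continuous, Lemma~2.1(ii) forces $\nu_\alpha(g)=0$, contradicting $\nu_\alpha(g)\to\nu(g)\neq 0$. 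The main obstacle I anticipate is the $C$-embedding step in (ii), (iii): extending the individual generators $\widetilde g$ is immediate, but passing to a general $\phi\in C(P_c(X))$ requires combining the compact-support exhaustion with the density argument above, plus careful use of Lemma~2.1 to ensure the local extensions paste to a globally continuous function on $P_c(Y)$.
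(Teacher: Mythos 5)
The paper gives no proof of this proposition (it is dismissed as ``an easy exercise''), so your proposal has to be judged on its own terms. Most of it is sound. Continuity, injectivity and the homeomorphism-onto-image property of $i_X$, $i_X^*$, $P_c(h)$ and $P_c^*(h)$ are correctly reduced to the surjectivity of the pullback $h^*$ and to the fact that the weak topology is generated by the evaluations $\widetilde f(\mu)=\mu(f)$; the $C$-embedding in (i) is correctly witnessed by the canonical extensions $\widetilde f$; and the closedness arguments are essentially right. Two small remarks on the latter: in (ii)--(iii) you only show that a limit measure $\nu$ satisfies $s(\nu)\subset h(X)$, and you still need the short extra step that any such $\nu$ lies in the image of $P_c(h)$ (view $\nu$ as an element of $P(s(\nu))$ via Lemma 2.1(ii) and transport it along the homeomorphism $h^{-1}|s(\nu)$); also $\nu_\alpha(g)=\mu_\alpha(g\circ h)=\mu_\alpha(0)=0$ holds directly, with no appeal to Lemma 2.1(ii).

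The genuine gap is the $C$-embedding (resp.\ $C^*$-embedding) of the image in (ii)--(iii), which you yourself flag as ``the main obstacle.'' The route you sketch --- Stone--Weierstrass density of the algebra generated by the $\widetilde g$ on each compact piece $P(K)$, followed by pasting --- does not work. Density gives approximation, not representation, so a general $\phi\in C(P_c(X))$ cannot be transported along the generators; the extensions $\widetilde f$ of the generators depend on a non-canonical choice of $f\in C(Y)$ with $f\circ h=g$, so the ``local extensions'' are not even well defined, let alone compatible; and $P_c(X)$ need not be determined by its compact subsets (the paper's own Claim 2 shows $P_c(\mathbb N)$ is nowhere locally compact and non-metrizable), so continuity on the pieces $P(K)$ would not yield continuity anyway. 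A workable route is to factor $m_Y\circ P_c(h)$ through the subspace $D$ of $\mathbb R^{C(Y)}$ consisting of all points $s$ with $s_{f_1}=s_{f_2}$ whenever $f_1\circ h=f_2\circ h$: any set-theoretic section of $h^*$ exhibits $D$ as a retract of $\mathbb R^{C(Y)}$ homeomorphic to $\mathbb R^{C(X)}$, which reduces everything to the single statement that $m_X(P_c(X))$ is $C$-embedded in $\mathbb R^{C(X)}$ (compare Proposition 2.4, where this set is shown to be closed and convex for realcompact $X$); for $P_c^*$ one can instead invoke the identification of $P_c^*$ with Chigogidze's functor $P_\beta$ and the embedding properties established in \cite{ch1}. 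As written, this step of your argument is missing rather than merely compressed.
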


There exists a natural embedding $e_X\colon X\to\mathbb R^{C(X)}$,
$e_X(x)=(f(x))_{f\in C(X)}$. Denote by $M^+(X)$ the set of all
regular linear functionals on $C(X)$ with the pointwise topology and
consider the map $m_X\colon M^+(C)\to\mathbb R^{C(X)}$,
$m_X(\mu)=(\mu(f))_{f\in C(X)}$. It easily seen that $m_X$ is also
an embedding extending and $m_X(M^+(X))$ is a closed convex subset
of $\mathbb R^{C(X)}$. Moreover, $P_c(X)\subset M^+(X)$. It is well
known that for compact $X$ the space $P(X)$ is homeomorphic with the
convex closed hull of $e_X(X)$ in $\mathbb R^{C(X)}$. A similar fact
is true for $P_c(X)$.

\begin{pro}
If $X$ is realcompact, then $P_c(X)$ is homeomorphic to the closed
convex hull of $e_X(X)$ in $\mathbb R^{C(X)}$.
\end{pro}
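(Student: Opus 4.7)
My plan is to identify $P_c(X)$ with its image under $m_X$ in $\mathbb{R}^{C(X)}$ and prove the set equality $m_X(P_c(X)) = \overline{\conv}\,e_X(X)$. Since $m_X$ is an embedding onto a closed convex subset (as stated in the excerpt), $P_c(X) \subset M^+(X)$, and $P_c(X)$ is itself convex (the support of a convex combination lies inside the union of the individual supports, still compact in $X$), the restriction $m_X|_{P_c(X)}$ is automatically an embedding with convex image containing $e_X(X) = m_X(i_X(X))$. The proposition therefore reduces to the displayed equality.

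For the inclusion $\subset$, I reduce to the classical compact case. Given $\mu \in P_c(X)$, its compact support $K := s(\mu) \subset X$ makes $\mu(f)$ depend only on $f|_K$ by Lemma 2.1(ii), and the factor map constructed in the proof of Proposition 2.2 lies in $P(K)$. Since $P(K)$ equals $\overline{\conv}\,e_K(K)$ in $\mathbb{R}^{C(K)}$, one obtains a net $\nu_\alpha = \sum_i t_i^\alpha \delta_{x_i^\alpha}$ with $x_i^\alpha \in K$ converging to this factor pointwise on $C(K)$. For every $f \in C(X)$, $\nu_\alpha(f) = \nu_\alpha(f|_K) \to \mu(f)$, so the same net converges to $\mu$ in $\mathbb{R}^{C(X)}$ and $m_X(\mu) \in \overline{\conv}\,e_X(X)$.

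For the reverse inclusion, take $y \in \overline{\conv}\,e_X(X)$. Since $m_X(M^+(X))$ is closed, convex, and contains $e_X(X)$, we have $y = m_X(\mu)$ for some $\mu \in M^+(X)$, and the task is to verify $\mu \in P_c(X)$. The nontrivial point, and what I expect to be the main obstacle, is $s(\mu) \subset X$ (compactness then follows because $s(\mu)$ is closed in $\beta X$). I propose a contradiction argument exploiting realcompactness. Suppose $p \in s(\mu) \setminus X$; then some $f \in C(X)$ with $f \geq 0$ satisfies $(\beta f)(p) = \infty$. Let $\tilde\mu$ be the Radon probability measure on $\beta X$ extending $\mu|_{C^*(X)}$; Lemma 2.1(iii) identifies $s(\mu)$ with the topological support of $\tilde\mu$, so the open neighborhoods $V_n = \{\beta f > n\}$ of $p$ satisfy $c_n := \tilde\mu(V_n) > 0$. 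Inner regularity of $\tilde\mu$ and Urysohn's lemma in $\beta X$ produce $\psi_n \in C^*(X)$, $0 \leq \psi_n \leq 1$, with $\psi_n = 0$ wherever $f \leq n$ and $\mu(\psi_n) \geq c_n/2$. Setting
\[
g = \sum_{n=1}^{\infty} \frac{2}{c_n}\,\psi_n,
\]
the support condition on $\psi_n$ combined with the boundedness of $f$ on every compact subset of $X$ makes the sum locally finite on $X$, so $g \in C(X)$. However $\mu(g) \geq \sum_{n=1}^{N}(2/c_n)\mu(\psi_n) \geq N$ for every $N$, contradicting $\mu(g) \in \mathbb{R}$, which finishes the proof.
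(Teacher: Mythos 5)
Your proof is correct, and its overall architecture coincides with the paper's: identify $P_c(X)$ with $m_X(P_c(X))$, prove the inclusion into $\overline{\conv}\,e_X(X)$ by factoring $\mu$ through $P(s(\mu))$ and using density of finitely supported measures, and prove the reverse inclusion by first noting that $\overline{\conv}\,e_X(X)\subset m_X(M^+(X))$. The one genuine divergence is the last step: the paper disposes of it in one line by citing Hewitt's theorem that on a realcompact space every regular (non-negative, normalized) linear functional on $C(X)$ has compact support, whereas you prove exactly that fact from scratch --- taking $p\in s(\mu)\setminus X$, using realcompactness to produce $f\ge 0$ whose extension is infinite at $p$, and building an unbounded continuous $g=\sum(2/c_n)\psi_n$ with $\mu(g)=+\infty$. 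This is a sound argument (it is essentially the unboundedness trick the paper itself uses, following Hewitt, in the proof of Lemma 2.1(iii)), and it buys self-containedness at the cost of length; the only cosmetic imprecision is attributing to Lemma 2.1(iii) the identification of $s^*(\mu)$ with the topological support of the representing Radon measure $\tilde\mu$ on $\beta X$ --- the lemma only gives $s(\mu)=s^*(\mu)$, and the further identification, while standard and easy from inner regularity, is an extra step you should state.
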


\begin{proof}
Obviously, $m_X(P_c(X))$ is a convex subset of $\mathbb R^{C(X)}$
containing the set $conv~e_X(X)$. It suffices to show that
$m_X(P_c(X))$ coincides with the set $B=\overline{conv~e_X(X)}$.
Suppose $\mu\in P_c(X)$. By Lemma 2.1(ii) and Proposition 2.2, for
every $f\in C(X)$ the value $\mu(f)$ is determined by the
restriction $f|s(\mu)$. So, there exists an element $\nu\in
P(s(\mu))$ such that $\mu(f)=\nu(f|s(\mu))$, $f\in C(X)$ (see the
proof of Proposition 2.2). Since the set $P_f(s(\mu))$ of all
measures from $P(s(\mu))$ having finite supports is dense in
$P(s(\mu))$ \cite{ff}, there is a net $\{\nu_\alpha\}_{\alpha\in
A}\subset P_f(s(\mu))$ converging to $\nu$ in $P(s(\mu))$. Each
$\nu_\alpha$ can be identified with the measure $\mu_\alpha\in
P_c(X)$ defined by $\mu_\alpha(f)=\nu_\alpha(f|s(\mu))$, $f\in
C(X)$. Moreover, the net $\{\mu_\alpha\}_{\alpha\in A}$ converges to
$\mu$ in $P_c(X)$. Then $\{m_X(\mu_\alpha)\}_{\alpha\in A}\subset
conv~e_X(X)$ and converges to $m_X(\mu)$ in $\mathbb R^{C(X)}$. So,
$m_X(\mu)\in B$. In this way we obtained $m_X(P_c(X))\subset B$.

On the other hand, since $m_X(M^+(X))$ is a closed and convex subset
of $\mathbb R^{C(X)}$ containing $e_X(X)$, $B\subset m_X(M^+(X))$.
So, the elements of $B$ are of the form $m_X(\mu)$ with $\mu$ being
a regular linear functional on $C(X)$. Since $X$ is realcompact,
according to \cite[Theorem 18]{he}, any such a functional has a
compact support in $X$. Therefore, $B\subset m_X(P_c(X))$.
\end{proof}

There exists a natural continuous map $j_X\colon P_c(X)\to P_c^*(X)$
assigning to each $\mu\in P_c(X)$ the measure $\nu=\mu|C^*(X)$. By
Lemma 2.1 and Proposition 2.2, $s(\mu)=s^*(\nu)$ and  $\mu(f)$ and
$\nu(g)$ depend, respectively, on the restrictions $f|s(\mu)$ and
$g|s^*(\nu)$ for all $f\in C(X)$ and $g\in C^*(X)$. This implies
that $j_X$ is one-to-one. Using again Lemma 2.1 and Proposition 2.2,
one can show that $j_X$ is surjective. According to next
proposition, $j_X$ is not always a homeomorphism.

A subset $A$ of a space $X$ is said to be {\em bounded} if
$f(A)\subset\mathbb R$ is bounded for every $f\in C(X)$. This notion
should be distinguished from the notion of a bounded set in a linear
topological space.

\begin{pro} For a given space $X$ we have:
\begin{itemize}
\item[(i)] The map $j_X$ is a homeomorphism if and only if $X$ is
pseudocompact;
\item[(ii)] $P_c(X)$ is metrizable if and only if $X$ is compact and
metrizable.
\end{itemize}
\end{pro}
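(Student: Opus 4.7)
The plan is to prove both (i) and (ii) by establishing the trivial implications directly and handling the substantial directions by exhibiting sequences of measures that obstruct, respectively, continuity of $j_X^{-1}$ and first countability of $P_c(X)$.

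For (i), if $X$ is pseudocompact then $C(X) = C^*(X)$, so $P_c(X)$ and $P_c^*(X)$ coincide as sets of functionals and carry the identical pointwise topology; in particular $j_X$ is the identity map and hence a homeomorphism. Conversely, assume $X$ is not pseudocompact and choose a non-negative unbounded $f \in C(X)$ together with points $x_n \in X$ satisfying $f(x_n) \geq n^2$; fix any $x_0 \in X$ and put $\mu_n = (1 - 1/n)\delta_{x_0} + (1/n)\delta_{x_n} \in P_c(X)$. For every $g \in C^*(X)$ one has $|\mu_n(g) - g(x_0)| \leq (2/n)\|g\|_\infty \to 0$, so $j_X(\mu_n) \to \delta^*_{x_0}$ in $P_c^*(X)$; but $\mu_n(f) \geq n$, so $\mu_n$ cannot converge to $\delta_{x_0}$ in $P_c(X)$, defeating continuity of $j_X^{-1}$.

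For (ii), the forward direction is standard: on compact metric $X$ one has $P_c(X) = P(X)$, which is compact metric under the Prokhorov metric. For the converse, assume $P_c(X)$ is metrizable. Then $X$ is metrizable since $i_X$ embeds $X$ as a closed subspace by Proposition 2.3(i). To force compactness, I argue by contradiction: if $X$ is non-compact metric, extract a closed discrete sequence of distinct points $\{x_n\}_{n \geq 0}$ and fix a countable neighborhood base $\{U_k\}$ at $\delta_{x_0}$, where $U_k = \{\mu : |\mu(f_i^{(k)}) - f_i^{(k)}(x_0)| < \epsilon_k,\ i=1,\ldots,N_k\}$ for some $f_i^{(k)} \in C(X)$ and $\epsilon_k > 0$. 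Define $M_k = \max_{i \leq N_k} |f_i^{(k)}(x_k) - f_i^{(k)}(x_0)|$, $t_k = \epsilon_k/(2(M_k+1))$, and $\mu_k = (1-t_k)\delta_{x_0} + t_k \delta_{x_k}$; a direct computation gives $\mu_k \in U_k$. Since $\{x_n\}$ is closed and discrete in the normal space $X$, Tietze's theorem produces $f \in C(X)$ with $f(x_0) = 0$ and $f(x_k) = 2/t_k$ for every $k \geq 1$. Then $\mu_k(f) = 2$ for all $k$, so no $U_k$ is contained in the open neighborhood $\{\mu : |\mu(f) - f(x_0)| < 1\}$ of $\delta_{x_0}$, a contradiction.

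The delicate point is the construction in (ii): $f$ must be chosen \emph{after} the base $\{U_k\}$ is fixed so that the data $M_k$, $t_k$, and $\mu_k$ depend only on the pre-given $f_i^{(k)}$ and not on $f$, thereby avoiding any circularity. The remaining verifications --- the uniform estimate in (i), the membership $\mu_k \in U_k$ in (ii), and the Tietze extension across the closed discrete set --- are routine.
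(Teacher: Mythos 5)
Your argument is correct, and for part (ii) it takes a genuinely different route from the paper. In (i) you are essentially doing what the paper does---exhibiting a sequence that converges in $P_c^*(X)$ but not in $P_c(X)$---except that your single-atom perturbation $(1-1/n)\delta_{x_0}+(1/n)\delta_{x_n}$ lets you verify non-convergence by the direct computation $\mu_n(f)\geq n$, whereas the paper spreads the mass over $\{x(2),\dots,x(n+1)\}$ and then invokes an external result (\cite[Proposition 3.1]{vv}: compact subsets of $P_c(X)$ have bounded union of supports) to rule out convergence; your version is more self-contained. In (ii) the paper reduces to the single space $P_c(\mathbb N)$: it proves $P_c(\mathbb N)$ is nowhere locally compact by a dimension count on the simplices $K(n)=P_c(\{1,\dots,n\})$, derives non-metrizability again from \cite[Proposition 3.1]{vv}, and then transports this to general $X$ via a $C$-embedded copy of $\mathbb N$ and the functoriality of $P_c$ (Proposition 2.3). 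You instead attack first countability of $P_c(X)$ at $\delta_{x_0}$ directly with a diagonal construction: given a putative countable base of basic neighborhoods you build $\mu_k\in U_k$ and then, \emph{after} the base is fixed, a Tietze extension $f$ with $\mu_k(f)=2$ for all $k$. This is more elementary and avoids both the dimension argument and the external boundedness result; what it gives up is the intermediate fact that $P_c(\mathbb N)$ is nowhere locally compact, which the paper reuses in the proof of Proposition 2.6. One small repair is needed in your construction: as written, $t_k=\epsilon_k/(2(M_k+1))$ may exceed $1$ (e.g.\ if $\epsilon_k$ is large and $M_k=0$), in which case $(1-t_k)\delta_{x_0}+t_k\delta_{x_k}$ is not a regular functional. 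Either normalize the base so that $\epsilon_k\leq 1$, or set $t_k=\min\{\epsilon_k/(2(M_k+1)),1/2\}$; the estimates $t_kM_k<\epsilon_k$ and $\mu_k(f)=2$ survive unchanged.
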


\begin{proof} (i)
Obviously, if $X$ is pseudocompact, then $C(X)=C^*(X)$ and $j_X$ is
the identity on $P_c(X)$. Suppose $X$ is not pseudocompact and
choose $g\in C(X)$ and a discrete countable set $\{x(n):n\geq 1\}$
in $X$ such that $\{g(x(n)):n\geq 1\}$ is unbounded and discrete in
$\mathbb R$. For every $n\geq 2$ define the measures $\mu_n\in
P_c(X)$ and $\nu_n\in P_c^*(X)$ as follows: $\mu_1=\delta_{x(1)}$,
$\displaystyle\mu_n=(1-1/n)\delta_{x(1)}+\sum_{k=2}^{n+1}(1/n)^2\delta_{x(k)}$
and $\nu_1=\delta^*_{x(1)}$,
$\displaystyle\nu_n=(1-1/n)\delta^*_{x(1)}+\sum_{k=2}^{n+1}(1/n)^2\delta^*_{x(k)}$.
Obviously, $j_X(\mu_n)=\nu_n$ for all $n\geq 1$ and
$s(\mu_n)=s^*(\nu_n)=\{x(1),x(2),..,x(n+1)\}$, $n\geq 2$. So,
$g\big(\bigcup_{n=1}^{\infty}s(\mu_n)\big)$ is unbounded in $\mathbb
R$. This, according to \cite[Proposition 3.1]{vv} (see also
\cite{a2}), means that the sequence $\{\mu_n\}_{n\geq 1}$ is not
compact. On the other hand, it is easily seen that $\{\nu_n\}_{n\geq
2}$ converges in $P_c^*(X)$ to $\nu_1$. Consequently, $j_X$ is not a
homeomorphism.

(ii) First we prove that $P_c(\mathbb N)$ is not metrizable, where
$\mathbb N$ is the set of the integers $n\geq 1$ with the discrete
topology. For every $n\geq 1$ let $K(n)=P_c(\{1,2,..,n\})$.
Obviously, every $K(n)$ is homeomorphic to a simplex of dimension
$n-1$ and $K(n)\subset K(m)$ for $n\leq m$. Moreover, $P_c(\mathbb
N)=\bigcup_{n\geq 1}K(n)$.

\textit{Claim $2$. $P_c(\mathbb N)$ is nowhere locally compact}.

Indeed, otherwise there would be $\mu\in P_c(\mathbb N)$ and its
open neighborhood $O(\mu)$ in $P_c(\mathbb N)$ with
$\overline{O(\mu)}$ being compact. Then, by \cite[Proposition
3.1]{vv}, $S=\cup\{s(\nu):\nu\in O(\mu)\}$ is a bounded subset of
$\mathbb N$. Hence, $S\subset\{1,2,..,p\}$ for some $p\geq 1$. The
last inclusion means that $O(\mu)\subset K(p)$, so $\dim O(\mu)\leq
p-1$. Therefore, $O(\mu)$ being open in $P_c(\mathbb N)$ is also
open in each $K(n)$, $n>p$. Since every open subset of $K(n)$ is of
dimension $n-1$, we obtain that $\dim O(\mu)>p-1$, a contradiction.

Now, suppose $P_c(\mathbb N)$ is metrizable and fix $\mu\in
P_c(\mathbb N)$. Since $P_c(\mathbb N)$ is nowhere locally compact
and $K(n)$, $n\geq 1$, are compact, $U(\mu)-K(n)\neq\varnothing$ for
all $n\geq 1$ and all neighborhoods $U(\mu)\subset P_c(\mathbb N)$
of $\mu$. Using the last condition and the fact that $\mu$ has a
countable local base (as a point in a metrizable space), we can
construct a sequence $\{\mu_n\}_{n\geq 1}$ converging to $\mu$ in
$P_c(\mathbb N)$ such that $\mu_n\not\in K(n)$ for all $n$.
Consequently, $s(\mu_n)\nsubseteq\{1,2,..,n\}$, $n\geq 1$. To obtain
a contradiction, we apply again \cite[Proposition 3.1]{vv} to
conclude that $s(\mu)\cup\bigcup_{n\geq 1}s(\mu_n)$ is a bounded
subset of $\mathbb N$ because $\{\mu, \mu_n:n\geq 1\}$ is a compact
subset of $P_c(\mathbb N)$. Therefore, $P_c(\mathbb N)$ is not
metrizable.

Let us complete the proof of (ii). If $X$ is compact metrizable,
then $P_c(X)$ is metrizable (see, for example \cite{ff}). Suppose
$P_c(X)$ is metrizable. Then, by Proposition 2.3(i), $X$ is also
metrizable. If $X$ is not compact, it should contain a $C$-embedded
copy of $\mathbb N$ and, according to Proposition 2.3(ii), $P_c(X)$
should contain a copy of $P_c(\mathbb N)$. So, $P_c(\mathbb N)$
would be also metrizable, which is not possible. Therefore,  $X$ is
compact and metrizable provided $P_c(X)$ is metrizable.
\end{proof}

\begin{pro}
If one of the spaces $P_c(X)$ and $P_c^*(X)$ is \v{C}ech-complete,
then $X$ is pseudocompact.
\end{pro}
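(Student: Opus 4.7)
The plan is to prove the contrapositive: supposing $X$ is not pseudocompact, I will show that neither $P_c(X)$ nor $P_c^*(X)$ can be \v{C}ech-complete. The starting point is the standard fact that any non-pseudocompact Tychonoff space contains a closed $C$-embedded copy of $\mathbb{N}$, and that such a copy is automatically $C^*$-embedded. By Proposition~2.3(ii)--(iii), the inclusion $\mathbb{N}\hookrightarrow X$ then induces closed embeddings $P_c(\mathbb{N})\hookrightarrow P_c(X)$ and $P_c^*(\mathbb{N})\hookrightarrow P_c^*(X)$; since \v{C}ech-completeness passes to closed subspaces, the task reduces to showing that neither $P_c(\mathbb{N})$ nor $P_c^*(\mathbb{N})$ is \v{C}ech-complete.

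To treat both cases uniformly, I would decompose these spaces as the countable unions $P_c(\mathbb{N}) = \bigcup_n K(n)$ with $K(n) = P_c(\{1,\dots,n\})$ and $P_c^*(\mathbb{N}) = \bigcup_n K^*(n)$ with $K^*(n) = P_c^*(\{1,\dots,n\})$. Each summand is a compact finite-dimensional simplex and is closed in its ambient space by another application of Proposition~2.3 to the closed $C$- and $C^*$-embeddings of $\{1,\dots,n\}$ in $\mathbb{N}$. Since every \v{C}ech-complete space is Baire, it is enough to show that each $K(n)$ (resp.\ $K^*(n)$) has empty interior, for then the ambient space becomes a countable union of closed nowhere-dense sets and so cannot be Baire.

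For $P_c(\mathbb{N})$ this empty-interior claim is already contained in Claim~2 in the proof of Proposition~2.5(ii): that space is nowhere locally compact, whereas a compact closed subset with nonempty interior would furnish compact neighbourhoods at its interior points. For $P_c^*(\mathbb{N})$ I would argue directly by approximation: given $\mu \in K^*(n)$, set $\mu_m = (1 - 1/m)\mu + (1/m)\delta^*_{n+m}$. For every $f \in C^*(\mathbb{N})$ the boundedness of $f$ yields $(1/m)f(n+m)\to 0$, so $\mu_m(f) \to \mu(f)$ and hence $\mu_m \to \mu$ in $P_c^*(\mathbb{N})$; meanwhile $s^*(\mu_m)$ contains the new atom $n+m$, and so $\mu_m \notin K^*(n)$.

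The main obstacle is precisely the split between the two functors: Claim~2 of Proposition~2.5(ii) relies on unbounded functions in $C(\mathbb{N})$ to detect that support points are escaping to infinity, and that argument does not transplant to $P_c^*(\mathbb{N})$, whose topology tests only against bounded functions. The short approximation argument above is essential exactly because it exploits the bounded side of this dichotomy.
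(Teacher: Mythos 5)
Your proof is correct, and while it shares the paper's overall skeleton (reduce to a closed $C$-embedded copy of $\mathbb N$ via Proposition 2.3, then show that neither $P_c(\mathbb N)$ nor $P_c^*(\mathbb N)$ is \v{C}ech-complete), the two core non-completeness arguments are carried out quite differently. For $P_c(\mathbb N)$ the paper does not use Baire category at all: it notes that a \v{C}ech-complete Lindel\"{o}f space is $p$-paracompact, takes a perfect map $g$ onto a separable metric space $Z$, forms the perfect one-to-one diagonal $g\triangle j_{\mathbb N}$ into $Z\times P_c^*(\mathbb N)$, and concludes that $P_c(\mathbb N)$ would be metrizable, contradicting Proposition 2.5(ii). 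Your route --- Claim 2 together with the fact that a \v{C}ech-complete space is Baire while $\bigcup_n K(n)$ is meager in itself --- is shorter and avoids both the $p$-paracompactness machinery and the appeal to second countability of $P_c^*(\mathbb N)$. For $P_c^*(\mathbb N)$ the paper likewise uses Baire category (via Polishness) to produce some $K^*(m)$ with nonempty interior, but then transfers the contradiction back to $P_c(\mathbb N)$ through $j_{\mathbb N}^{-1}$ and Claim 2; you instead kill the interior of each $K^*(n)$ directly with the perturbation $\mu_m=(1-1/m)\mu+(1/m)\delta^*_{n+m}$, which converges to $\mu$ precisely because the test functions are bounded, while its support picks up the atom $n+m$. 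Your remark that Claim 2 does not transplant to $P_c^*(\mathbb N)$ is accurate and is exactly why the paper detours through $j_{\mathbb N}$; your approximation argument is a clean substitute that makes the two cases run in parallel.
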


\begin{proof}
We prove first that non of the spaces $P_c(\mathbb N)$ and
$P_c^*(\mathbb N)$ is \v{C}ech-complete. Indeed, suppose
$P_c(\mathbb N)$ is \v{C}ech-complete. Since $P_c(\mathbb N)$ is
Lindel\"{o}f (as the union of the compact sets
$K(n)=P_c(\{1,2,..,n\})$), it is a p-paracompact in the sense of
Arhangel'skii \cite{a1}. So, there exists a perfect map $g$ from
$P_c(\mathbb N)$ onto a separable metric space $Z$. Then the
diagonal product $q=g\triangle j_{\mathbb N}\colon Z\times
P_c^*(\mathbb N)$ is perfect (because $g$ is perfect) and one-to-one
(because $j_{\mathbb N}$ is one-to-one). Thus, $q$ is a
homeomorphism. Since $P_c^*(\mathbb N)$ is second countable
\cite{ch1}, $Z\times P_c^*(\mathbb N)$ is metrizable. Consequently,
$P_c(\mathbb N)$ is metrizable, a contradiction (see Proposition
2.5(ii)).

Suppose now that $P_c^*(\mathbb N)$ is \v{C}ech-complete, so it is a
Polish space. Since $P_c^*(\mathbb N)$ is the union of the  compact
sets $K^*(n)=P_c^*(\{1,2,..,n\})$, $n\geq 1$, there exists $m>1$
such that $K^*(m)$ has a non-empty interior. Then
$K(m)=P_c(\{1,2,..,m\})$ has a non-empty interior in $P_c(\mathbb
N)$ because $K(m)=j_{\mathbb N}^{-1}(K^*(m))$. According to Claim 2,
this is again a contradiction.

If $X$ is not pseudocompact, there exists a function $g\in C(X)$ and
a discrete set $A=\{x_n:n\geq 1\}$ in $X$ such that $g(x_n)\neq
g(x_m)$ for $n\neq m$ and $g(A)$ is a discrete unbounded subset of
$\mathbb R$. Since $g(A)$ is $C$-embedded in $\mathbb R$, it follows
that $A$ is also $C$-embedded in $X$. So, $A$ is a $C$-embedded copy
of $\mathbb N$ in $X$. Then, by Proposition 2.3, $P_c(X)$ contains a
closed copy of $P_c(\mathbb N)$ and $P_c^*(X)$ contains a closed
copy of $P_c^*(\mathbb N)$. Since non of $P_c(\mathbb N)$ and
$P_c^*(\mathbb N)$ is \v{C}ech-complete, non of $P_c(X)$ and
$P_c^*(X)$ can be \v{C}ech-complete. This completes the proof.
\end{proof}

We say that an inverse system $\displaystyle
S=\{X_\alpha,p^\alpha_\beta, A\}$ is {\em factorizing} \cite{ch3} if
for every $h\in C(X)$, where $X$ is the limit space of $S$, there
exists $\alpha \in A$ and $h_\alpha\in C(X_\alpha)$ with
$h=h_\alpha\circ p_\alpha$. Here, $p_\alpha\colon X\to X_\alpha$ is
the $\alpha$-th limit projection. According to \cite{ch1}, $P_c^*$
is a continuous functor, i.e. for every factorizing inverse system
$S$ the space $P_c^*(\lim S)$ is the limit of the inverse system
$P_c^*(S)=\{P_c^*(X_\alpha),P_c^*(p^\alpha_\beta), A\}$. The same is
true for the functor $P_c$.

\begin{pro}
$P_c$ is a continuous functor.
\end{pro}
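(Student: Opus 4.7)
The candidate isomorphism is $\Phi\colon P_c(X)\to\lim P_c(S)$, $\Phi(\mu)=(P_c(p_\alpha)(\mu))_{\alpha\in A}$, where $p_\alpha\colon X\to X_\alpha$ is the $\alpha$-th limit projection. Continuity of $\Phi$ is immediate from the continuity of each $P_c(p_\alpha)$. Injectivity follows from the factorizing property: if $P_c(p_\alpha)(\mu)=P_c(p_\alpha)(\mu')$ for every $\alpha$, then $\mu$ and $\mu'$ agree on every function of the form $h_\alpha\circ p_\alpha$, hence on all of $C(X)$. For openness, a basic pointwise neighborhood of $\mu$ in $P_c(X)$ is determined by finitely many $h_1,\ldots,h_k\in C(X)$ and some $\varepsilon>0$; by the factorizing property plus directedness of $A$, one can choose a single index $\alpha$ and $h_{i,\alpha}\in C(X_\alpha)$ with $h_i=h_{i,\alpha}\circ p_\alpha$, so the neighborhood is the pullback under $\Phi$ of a basic open set in $\lim P_c(S)$.

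The main task is surjectivity. Given a thread $(\mu_\alpha)\in\lim P_c(S)$, pass to $P_c^*$: the family $(j_{X_\alpha}(\mu_\alpha))$ is a thread in $\lim P_c^*(S)$, as is checked directly from $j_X(\mu)=\mu|C^*(X)$ together with the compatibility of $(\mu_\alpha)$. Since $P_c^*$ is a continuous functor by \cite{ch1}, there is a unique $\nu\in P_c^*(X)$ with $P_c^*(p_\alpha)(\nu)=j_{X_\alpha}(\mu_\alpha)$ for every $\alpha$. Set $K=s^*(\nu)$; by Lemma~2.1 and the proof of Proposition~2.2, $\nu$ corresponds to a probability measure $\tilde\nu$ on the compactum $K\subset X$. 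Define $\mu(h)=\int_K(h|K)\,d\tilde\nu$ for $h\in C(X)$. Then $\mu$ is a non-negative linear functional with $\mu(1)=1$ (hence regular) and $s(\mu)\subset K$ compact, so $\mu\in P_c(X)$. To verify $P_c(p_\alpha)(\mu)=\mu_\alpha$, compute $\mu(h_\alpha\circ p_\alpha)=\int_{p_\alpha(K)}h_\alpha\,d(p_\alpha|K)_*\tilde\nu$; on bounded $h_\alpha$ this equals $\nu(h_\alpha\circ p_\alpha)=\mu_\alpha(h_\alpha)$, identifying $(p_\alpha|K)_*\tilde\nu$ with the classical measure on $K_\alpha=s(\mu_\alpha)$ representing $\mu_\alpha$, and since $h_\alpha|K_\alpha$ is bounded by compactness of $K_\alpha$ the identity extends to all $h_\alpha\in C(X_\alpha)$.

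The only non-routine step is the surjectivity: $P_c$ operates on unbounded functions, so the continuity of $P_c^*$ cannot be invoked directly. The device is to use $P_c^*$ to produce the compactly supported classical measure $\tilde\nu$, and then observe that because its support is compact in $X$, unbounded test functions $h_\alpha$ become bounded upon restriction to $K_\alpha\supset p_\alpha(K)$, reducing the compatibility check to the bounded case already handled by \cite{ch1}.
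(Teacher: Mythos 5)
Your proof is correct and follows essentially the same route as the paper's: the surjectivity of $P_c(X)\to\lim P_c(S)$ is reduced, via the bijections $j_{X_\alpha}$, to the known continuity of $P_c^*$, and the topological part is handled by factorizing the finitely many test functions through a single $X_\alpha$. You merely spell out in more detail the steps the paper leaves to the reader, namely the recovery of $\mu$ from $\nu$ through the classical measure on the compact support and the verification that $P_c(p_\alpha)(\mu)=\mu_\alpha$.
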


\begin{proof}
Let $\displaystyle S=\{X_\alpha,p^\alpha_\beta, A\}$ be a
factorizing inverse system with a limit space $X$ and let
$\{\mu_\alpha:\alpha\in A\}$ be a thread of the system $P_c(S)$. For
every $\alpha\in A$ we consider the measure
$\displaystyle\nu_\alpha=j_{X_\alpha}(\mu_\alpha)$. Here,
$\displaystyle j_{X_\alpha}\colon P_c(X_\alpha)\to P_c^*(X_\alpha)$
is the one-to-one surjection defined above. It is easily seen that
$\{\nu_\alpha:\alpha\in A\}$ is a thread of the system $P_c^*(S)$,
so it determines a unique measure $\nu\in P_c^*(X)$ (recall that
$P_c^*$ is a continuous functor). There exists a unique measure
$\mu\in P_c(X)$ with $j_X(\mu)=\nu$. One can show that
$P_c(p_\alpha)(\mu)=\mu_\alpha$ for all $\alpha$. Hence, the set
$P_c(X)$ coincides with the limit set of the system $P_c(S)$.  It
remains to show that for every $\mu^0\in P_c(X)$ and its
neighborhood $U$ in $P_c(X)$ there exists $\alpha\in A$ and a
neighborhood $V$ of $\mu_\alpha^0=P_c(p_\alpha)(\mu^0)$ in
$P_c(X_\alpha)$ such that $P_c(p_\alpha)^{-1}(V)\subset U$. We can
suppose that $U=\{\mu\in P_c(X): |\mu(h_i)-\mu^0(h_i)|<\epsilon,
i=1,2,..,k\}$ for some $\epsilon>0$ and $h_i\in C(X)$, $i=1,2,..,k$.
Since $S$ is factorizing, we can find $\alpha\in A$ and functions
$g_i\in C(X_\alpha)$ such that $h_i=g_i\circ p_\alpha$ for all
$i=1,..,k$. Then $V=\{\mu_\alpha\in P_c(X_\alpha):
|\mu_\alpha(g_i)-\mu^0_\alpha(g_i)|<\epsilon, i=1,2,..,k\}$ is the
required neighborhood of $\mu^0_\alpha$.
\end{proof}

\section{Milyutin maps and linear operators with compact supports}

For every linear operator $u:C(X,E)\to C(Y,E)$, where $E$ is a
locally convex linear space, and $y\in Y$ there exists a linear map
$T(y)\colon C(X,E)\to E$ defined by $T(y)(g)=u(g)(y)$, $g\in
C(X,E)$. We say that $u$ has {\em compact supports} (resp., $u$ is
{\em regular}) if each $T(y)$ has a compact support in $X$ (resp.,
each $T(y)$ is regular). In a similar way we define a linear
operator with compact supports if $u\colon C(X,E)\to C^*(Y,E)$)
(resp., $u\colon C^*(X,E)\to C^*(Y,E)$ or $u\colon C^*(X,E)\to
C(Y,E)$)). Let us note that a linear map $u:C(X,E)\to C(Y,E)$
(resp., $u:C^*(X,E)\to C^*(Y,E)$) is regular and has compact
supports iff the formula

\medskip\noindent
(3)\hspace{0.3cm} $T(y)(g)=u(g)(y)$ with $g\in C(X,E)$ (resp., $g\in
C^*(X,E)$)

\medskip\noindent
produces a continuous map $T\colon Y\to P_c(X,E)$ (resp., $T\colon
Y\to P_c^*(X,E)$). If $f\colon X\to Y$ is a surjective map, then a
liner operator $u\colon C(X,E)\to C(Y,E)$ (resp., $u\colon
C^*(X,E)\to C^*(Y,E)$) is called {\em an averaging operator for $f$}
if $u(\varphi\circ f)=\varphi$ for every $\varphi\in C(Y,E)$ (resp.,
$\varphi\in C^*(Y,E)$). It is easily seen that $u\colon C(X,E)\to
C(Y,E)$ (resp., $u\colon C^*(X,E)\to C^*(Y,E)$) is a regular
averaging operator for $f$ with compact supports if and only if the
map $T\colon Y\to P_c(X,E)$ (resp., $T\colon Y\to P_c^*(X,E)$)
defined by (3), has the following property: the support of every
$T(y)$, $y\in Y$, is contained in $f^{-1}(y)$. Such a map $T$ will
be called {\em a map associated with $f$}. It is also clear that if
$T\colon Y\to P_c(X,E)$ (resp., $T\colon Y\to P_c^*(X,E)$) is a map
associated with $f$, then the equality (3) defines a regular
averaging operator $u\colon C(X,E)\to C(Y,E)$ (resp., $u\colon
C^*(X,E)\to C^*(Y,E)$) for $f$ with compact supports.

A surjective map $f\colon X\to Y$ is said to be {\em Milyutin} if
$f$ admits a regular averaging operator $u\colon C(X)\to C(Y)$ with
compact supports, or equivalently, there exists a map $T\colon Y\to
P_c(X)$  associated with $f$. A surjective map $f\colon X\to Y$ is
called {\em weakly Milyutin} (resp., {\em strongly Milyutin}) if
there exists a map $T\colon Y\to P_c^*(X)$ (resp., $T\colon
P_c(Y)\to P_c(X)$) such that $s^*\big(g(y)\big)\subset f^{-1}(y)$
for all $y\in Y$ (resp., $s\big(g(\mu)\big)\subset
f^{-1}\big(s(\mu)\big)$ for all $\mu\in P_c(Y)$). Obviously, every
strongly Milyutin map is Milyutin. Moreover, if $T\colon Y\to
P_c(X)$ is a map associated with $f$, then the map $j_X\circ T\colon
Y\to P_c^*(X)$ is witnessing that Milyutin maps are weakly Milyutin.
One can also show that if $f\colon X\to Y$ is weakly Milyutin, then
its \v{C}ech-Stone extension $\beta f\colon\beta X\to\beta Y$ is a
Milyutin map.

We are going to establish some properties of (weakly) Milyutin maps.

\begin{pro}
Let $f\colon X\to Y$ be a weakly Milyutin map and $E$ a complete
locally convex space. Then $f$ admits  a regular averaging operator
$u\colon C^*(X,E)\to C^*(Y,E)$ with compact supports.
\end{pro}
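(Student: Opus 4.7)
Since $f$ is weakly Milyutin, fix a continuous map $T\colon Y\to P_c^*(X)$ with $s^*(T(y))\subset f^{-1}(y)$ for every $y\in Y$. The plan is to upgrade $T$ to a continuous map $\tilde T\colon Y\to P_c^*(X,E)$ associated with $f$; by the discussion immediately preceding the statement, the operator $u(g)(y):=\tilde T(y)(g)$ is then automatically the desired regular averaging operator with compact supports. For each $y\in Y$ set $K_y=s^*(T(y))$. By Lemma~2.1(ii) and the $C^*$-analogue of Proposition~2.2, $T(y)$ depends only on restrictions to the compact set $K_y$ and identifies with a Radon probability measure $\nu_y$ on $K_y$. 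For $g\in C^*(X,E)$ the image $g(K_y)$ is compact in $E$, so $\overline{conv~g(K_y)}$ is compact by completeness of $E$, and I define $\tilde T(y)(g)$ to be the unique element of $\overline{conv~g(K_y)}$ satisfying
$$
\ell\bigl(\tilde T(y)(g)\bigr)=T(y)(\ell\circ g)\quad\text{for every }\ell\in E^*.
$$
Existence is the standard Bourbaki weak integral, obtained by approximating $\nu_y$ by finitely supported probability measures on $K_y$ (dense as in the proof of Proposition~2.4) and extracting a cluster point in the compact set $\overline{conv~g(K_y)}$; uniqueness is Hahn--Banach.

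The verifications that $\tilde T(y)$ is linear and regular, that $s^*(\tilde T(y))\subset K_y\subset f^{-1}(y)$ (if $g$ vanishes on a neighbourhood of $K_y$ then so does each $\ell\circ g$, so $T(y)(\ell\circ g)=0$ by Lemma~2.1(i), which forces $\tilde T(y)(g)=0$), that the averaging identity $\tilde T(y)(\varphi\circ f)=\varphi(y)$ holds for $\varphi\in C^*(Y,E)$ (since $\varphi\circ f$ is constantly $\varphi(y)$ on $K_y$), and that $u(g)\in C^*(Y,E)$ (its image lies in the bounded set $\overline{conv~g(X)}$) are routine.

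The main obstacle is continuity of $y\mapsto\tilde T(y)(g)$ in the original topology of $E$, since only weak continuity drops out of the defining identity and the continuity of $T$. To upgrade this, fix $y_0\in Y$, a continuous seminorm $p$ on $E$ with closed unit ball $V$, and $\epsilon>0$; cover the compact set $g(K_{y_0})$ by finitely many balls $e_i+\epsilon V$, pull back to an open cover $\{U_i\}$ of $K_{y_0}$ in $X$, fix a subordinate partition of unity $\{\varphi_i\}\subset C^*(X)$ with $\sum\varphi_i=1$ on an open neighbourhood $W$ of $K_{y_0}$, and choose a cutoff $\chi\in C^*(X)$ with $\chi=1$ on a smaller neighbourhood of $K_{y_0}$ and $\mathrm{supp}\,\chi\subset W$. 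For the simple function $g':=\sum\varphi_i e_i$ the value $\tilde T(y)(g')=\sum T(y)(\varphi_i)e_i$ is manifestly continuous in $y$; splitting $g-g'=(g-g')\chi+(g-g')(1-\chi)$, the first summand has $p$-value at most $\epsilon$ on all of $X$ (so $p(\tilde T(y)((g-g')\chi))\leq\epsilon$ by regularity), while the second vanishes where $\chi=1$, and the pointwise bound $|\ell((g-g')(1-\chi))(x)|\leq M(1-\chi(x))$ for $\ell\in V^\circ$ (with $M:=\sup_x p((g-g')(x))<\infty$) combined with $T(y_0)(1-\chi)=0$ and the continuity of $T$ controls $p(\tilde T(y)((g-g')(1-\chi)))$ for $y$ close to $y_0$. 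Combining these three estimates yields $p(\tilde T(y)(g)-\tilde T(y_0)(g))\to 0$, completing the proof.
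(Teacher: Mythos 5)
Your proof is correct, and its overall architecture coincides with the paper's: in both cases $u(g)(y)$ is the barycenter $\int_X g\,dT(y)$ of the pushed-forward measure, regularity comes from the value lying in $\overline{conv~g(X)}$, and the support containment $s^*(\Lambda(y))\subset s^*(T(y))\subset f^{-1}(y)$ is what makes $u$ an averaging operator with compact supports. The genuine difference is in how the barycenter's existence and, above all, its continuous dependence on $y$ are justified. The paper forms $B(g)=\overline{conv~g(X)}$, notes it is complete and bounded, and invokes Banakh's theorem (\cite[Theorem 3.4 and Proposition 3.10]{b1}) for a continuous barycenter map $b\colon P_c^*(B(g))\to B(g)$, so that $e(g)=b\circ P_c^*(g)$ and hence $u(g)=e(g)\circ T$ are continuous by composition. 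You instead define $\tilde T(y)(g)$ as the weak (Pettis) integral, getting existence from compactness of $\overline{conv~g(K_y)}$ in a complete space and uniqueness from Hahn--Banach, and then prove continuity of $y\mapsto\tilde T(y)(g)$ by hand: the decomposition $g=g'+(g-g')\chi+(g-g')(1-\chi)$ with a subordinate partition of unity, the regularity estimate on the middle term, and the polar estimate $p(z)=\sup\{|\ell(z)|:\ell\in V^{\circ}\}$ combined with monotonicity of $T(y)$ and $T(y_0)(1-\chi)=0$ on the last term, all check out. What your route buys is self-containedness (no appeal to the external continuity result on barycenter maps) at the cost of a longer argument; what the paper's route buys is brevity and a reusable continuous map $e(g)$ on all of $P_c^*(X)$, which it exploits again in Theorem~4.2. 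Your verification of the support inclusion via the weak characterization and Lemma~2.1(i) is also a clean alternative to the paper's approximation of $T(y)$ by finitely supported measures.
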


\begin{proof}
Let $T\colon Y\to P_c^*(X)$ be a map associated with $f$. For every
$g\in C^*(X,E)$ let $B(g)=\overline{conv~g(X)}$ and consider the map
$P_c^*(g)\colon P_c^*(X)\to P_c^*(B(g))$. Since $B(g)$ is a closed
and bounded in $E$ and $E$ is complete, by \cite[Theorem 3.4 and
Proposition 3.10]{b1}, there exists a continuous  map $b\colon
P_c^*(B(g))\to B(g)$ assigning to each measure its barycenter. The
composition $e(g)=b\circ P_c^*(g)\colon P_c^*(X)\to E$ is a
continuous extension of $g$ (we consider $X$ as a subset of
$P_c^*(X)$). Now, we define $u\colon C^*(X,E)\to C^*(Y,E)$ by
$u(g)=e(g)\circ T$. This a linear operator because
$e(g)(\mu)=\int_Xgd\mu$ for every $\mu\in P_c^*(X)$. Since $e(g)$ is
a map from $P_c^*(X)$ into $B(f)$, the linear map $\Lambda(y)\colon
C^*(X,E)\to E$, $\Lambda(y)(g)=u(g)(y)$, is regular for all $y\in
Y$.

So, it remains to show that the support of each $\Lambda(y)$ is
compact and it is contained in $f^{-1}(y)$. Because $T$ is
associated with $f$, $K(y)=s^*(T(y))$ is a compact subset of
$f^{-1}(y)$, $y\in Y$.
 We are going to show that if $h|K(y)=g|K(y)$ with $h,g\in
C^*(X,E)$, then $\Lambda(y)(h)=\Lambda(y)(g)$. That would imply the
support of $\Lambda(y)$ is contained in $K(y)\subset f^{-1}(y)$, and
hence it should be compact. To this end, observe that $T(y)$ can be
considered as an element of $P(K(y))$ - the probability measures on
$K(y)$. So, $T(y)$ is the limit of a net $\{\mu_\alpha\}\subset
P(K(y))$ consisting of measures with finite supports. Each
$\mu_\alpha$ is of the form
$\displaystyle\sum_{i=1}^{k(\alpha)}\lambda_i^\alpha\delta^*_{x_i^\alpha}$,
where $x_i^\alpha\in K(y)$ and $\lambda_i^\alpha$ are positive reals
with $\sum_{i=1}^{k(\alpha)}\lambda_i^\alpha=1$. Then
$\{e(g)(\mu_\alpha)\}$ converges to $e(g)(T(y))$ and
$\{e(h)(\mu_\alpha)\}$ converges to $e(h)(T(y))$. On the other hand,
$e(h)(\mu_\alpha)=\int_Xhd\mu_\alpha=\sum_{i=1}^{k(\alpha)}\lambda_i^\alpha
h(x_i^\alpha)$ and
$e(g)(\mu_\alpha)=\sum_{i=1}^{k(\alpha)}\lambda_i^\alpha
g(x_i^\alpha)$. Since $h|K(y)=g|K(y)$, $h(x_i^\alpha)=g(x_i^\alpha)$
for all $\alpha$ and $i$. Hence, $e(h)(T(y))=e(g)(T(y))$ which means
that $\Lambda(y)(h)=\Lambda(y)(g)$. Therefore, $u$ is a regular
averaging operator for $f$ and has compact supports.
\end{proof}

\begin{cor}
Let $X$ be a complete bounded convex subset of a locally convex
space and $f\colon X\to Y$ be a weakly Milyutin map such that
$f^{-1}(y)$ is convex for every $y\in Y$. Then there exists a map
$g\colon Y\to X$ such that $g(y)\in f^{-1}(y)$ for all $y\in Y$.
\end{cor}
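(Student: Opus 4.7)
The approach will be to construct $g$ by applying a regular averaging operator associated to $f$ to the inclusion map of $X$ into its ambient space. The idea is that $g(y)$ should be the barycenter of the probability measure $T(y)$ witnessing the weak Milyutin property at $y$; since $T(y)$ is supported on a compact subset of $f^{-1}(y)$, and $f^{-1}(y)$ is closed and convex, this barycenter ought to land inside $f^{-1}(y)$.

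Concretely, I would first pass to a completion $\tilde E$ of the ambient locally convex space. Because $X$ is complete, bounded, and convex, it is closed and convex in $\tilde E$, and the inclusion $\iota\colon X\to\tilde E$ lies in $C^*(X,\tilde E)$. Since $f$ is weakly Milyutin, fix a map $T\colon Y\to P_c^*(X)$ associated with $f$, and invoke Proposition~3.1 with $E=\tilde E$ to obtain a regular averaging operator $u\colon C^*(X,\tilde E)\to C^*(Y,\tilde E)$ with compact supports, built from $T$ as in the proof of that proposition. Set $g:=u(\iota)\in C^*(Y,\tilde E)$; this is the candidate section.

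It then remains to verify that $g(y)\in f^{-1}(y)$ for every $y\in Y$. Write $\Lambda(y)(h)=u(h)(y)$. From the proof of Proposition~3.1, $\Lambda(y)$ is regular with support contained in $K(y):=s^*(T(y))\subset f^{-1}(y)$, so $g(y)=\Lambda(y)(\iota)$ is the barycenter of the probability measure $P_c^*(\iota)(T(y))$ on $\tilde E$, which is concentrated on $K(y)$. Approximating $T(y)$ by discrete measures $\sum\lambda_i\delta^*_{x_i}$ with $x_i\in K(y)$, as in Proposition~3.1, the corresponding barycenters $\sum\lambda_i x_i$ lie in $\operatorname{conv} K(y)\subset f^{-1}(y)$ by convexity of the fiber. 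Since $X$ is closed in $\tilde E$ and $f^{-1}(y)$ is closed in $X$, passing to the limit in $\tilde E$ yields $g(y)\in f^{-1}(y)\subset X$, as required.

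The main obstacle is reconciling the completeness hypothesis of Proposition~3.1, which requires the target locally convex space to be complete, with the weaker assumption that only $X$ itself is complete; the passage to $\tilde E$ is the technical trick that resolves this, and it works precisely because completeness plus convexity of $X$ force $\overline{\operatorname{conv}}\,K(y)\subset X$ in $\tilde E$. Beyond that the argument is routine, but the whole point of the corollary is that Proposition~3.1 applied to the inclusion produces a selection automatically.
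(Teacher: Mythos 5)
Your proof is correct and is essentially the paper's argument: the paper simply sets $g=b\circ T$, where $b\colon P_c^*(X)\to X$ is the continuous barycenter map of Banakh (which exists because $X$ itself is complete, bounded and convex), and observes that $b(T(y))\in\overline{conv~s^*(T(y))}\subset f^{-1}(y)$ since the fiber is convex and closed. Your detour through Proposition~3.1 applied to the inclusion $X\hookrightarrow\tilde E$ computes exactly this barycenter, merely repackaged via the averaging operator and the completion of the ambient space.
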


\begin{proof}
Let $T\colon Y\to P_c^*(X)$ be a map associated with $f$. By
\cite[Proposition 3.10]{b1}, the barycenter $b(\mu)$ of each measure
$\mu\in P_c^*(X)$ belongs to $X$ and the map $b\colon P_c^*(X)\to X$
is continuous. Since the support of each $T(y)$, $y\in Y$, is
compact subset of $f^{-1}(y)$ and
$\overline{conv~s^*\big(T(y)\big)}\subset f^{-1}(y)$ (recall that
$f^{-1}(y)$ is convex), $b(T(y))\in f^{-1}(y)$. So, the map
$g=b\circ T$ is as required.
\end{proof}

Recall that a set-valued map $\Phi\colon X\to Y$ is lower
semi-continuous (br., lsc) if for every open $U\subset Y$ the set
$\Phi^{-1}(U)=\{x\in X: \Phi(x)\cap U\neq\varnothing\}$ is open in
$X$.

\begin{lem}
For every space $X$ and a linear space $E$ the set-valued map
$\Phi_X\colon P_c(X,E)\to X$, $($resp., $\Phi_X^*\colon
P_c^*(X,E)\to X$$)$ defined by $\Phi_X(\mu)=s(\mu)$,
$($resp.,$\Phi_X^*(\mu)=s^*(\mu)$$)$ is lsc.
\end{lem}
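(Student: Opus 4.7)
The plan is to show that for every open $U\subset X$ the preimage
$\Phi_X^{-1}(U)=\{\mu\in P_c(X,E):s(\mu)\cap U\neq\varnothing\}$ is open in $P_c(X,E)$; the argument for $\Phi_X^*$ will be parallel.

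Fix $\mu_0\in\Phi_X^{-1}(U)$ and pick a point $x\in s(\mu_0)\cap U$. Since $X-U$ is closed in $X$, the set $V=\beta X-\overline{X-U}^{\,\beta X}$ is an open subset of $\beta X$ containing $x$ with $V\cap X=U$. The definition of the support then supplies $h\in C(X,E)$ with $(\beta h)(\beta X-V)=0$ and $\mu_0(h)\neq 0$. Because $P_c(X,E)$ carries the pointwise topology and $E$ is Hausdorff, the set $W=\{\mu\in P_c(X,E):\mu(h)\neq 0\}$ is an open neighborhood of $\mu_0$.

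The crux is to verify $W\subset\Phi_X^{-1}(U)$. Suppose, toward a contradiction, that some $\mu\in W$ has $s(\mu)\cap U=\varnothing$. As $s(\mu)\subset X$, this forces $s(\mu)\subset X-U\subset\beta X-V$, and hence $(\beta h)(s(\mu))=0$. By Proposition~2.2, the regular map $\mu$ with compact support is continuous on $C_k(X,E)$, and the proof of that proposition shows in addition that its restriction to $C^*(X,E)$ is continuous for the uniform topology. Lemma~2.1(ii) then yields $\mu(h)=0$, contradicting $\mu\in W$. Therefore every $\mu\in W$ satisfies $s(\mu)\cap U\neq\varnothing$, which proves the openness of $\Phi_X^{-1}(U)$.

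The only technical wrinkle I anticipate is that Lemma~2.1 and Proposition~2.2 are formulated for norm spaces, whereas $E$ here is merely a locally convex space; the extension is routine, one simply replaces the norm $\|\cdot\|$ by a defining family of continuous seminorms on $E$ and repeats the same arguments seminorm-by-seminorm. Modulo this, the identical reasoning, with $s$ replaced by $s^*$ and $C(X,E)$ by $C^*(X,E)$, establishes the lower semi-continuity of $\Phi_X^*$ on $P_c^*(X,E)$.
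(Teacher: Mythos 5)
Your argument is correct, but note that the paper does not actually prove this lemma: it simply refers to \cite[Lemma 1.2.7]{bd} and omits the argument, so your self-contained proof is a genuine addition rather than a variant of the paper's. The structure is the right one: the set $V=\beta X-\overline{X-U}^{\,\beta X}$ does satisfy $V\cap X=U$ and contains the chosen $x\in s(\mu_0)\cap U$ (any open $O\subset\beta X$ with $x\in O$ and $O\cap X\subset U$ misses $X-U$, hence misses its closure), so the definition of the support produces the test function $h$; the key inclusion $s(\mu)\subset X-U\subset\beta X-V$ for a hypothetical $\mu\in W$ with $s(\mu)\cap U=\varnothing$ then reduces everything to Lemma 2.1(ii), whose continuity hypothesis follows from regularity via $\|\mu(f)\|\le\|f\|$ (it is essential that you invoke part (ii) rather than part (i), since $\beta X-V$ need not be a neighborhood of $s(\mu)$, and you do this correctly). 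The one caveat you already flag yourself is real: Lemma 2.1 and Proposition 2.2 are stated for normed $E$, while the lemma allows an arbitrary locally convex $E$, so strictly speaking you are relying on the seminorm-by-seminorm extension of Lemma 2.1(ii); since regularity gives $p(\mu(f))\le\sup_{x}p(f(x))$ for every continuous seminorm $p$, this extension is indeed routine, and the paper itself silently uses such extensions elsewhere. What your approach buys is a proof that is elementary and internal to the paper's own machinery; what the citation to \cite{bd} buys is brevity.
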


\begin{proof}
A similar statement was established in \cite[Lemma 1.2.7]{bd}, so we
omit the arguments.
\end{proof}

\begin{pro}
Let $f\colon X\to Y$ be a weakly Milyutin map. Then we have:
\begin{itemize}
\item[(i)] $\beta f\colon\beta X\to\beta Y$ is a Milyutin map;
\item[(ii)] $f$ is a Milyutin map provided $f$ is perfect.
\end{itemize}
\end{pro}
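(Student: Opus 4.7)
My plan is to prove $(i)$ by extending the given associated map across the Stone--\v Cech compactification, and then deduce $(ii)$ by restricting and using perfectness.

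For $(i)$, fix a continuous $T_0\colon Y\to P_c^*(X)$ witnessing that $f$ is weakly Milyutin. Identifying $P_c^*(X)$ with the space of probability measures on $\beta X$ supported in $X$ (the functor $P_\beta$ mentioned in Section 1), I view $T_0$ as a continuous map into the compact Hausdorff space $P(\beta X)=P_c(\beta X)$; by the universal property of $\beta Y$, $T_0$ extends uniquely to a continuous $\tilde T\colon\beta Y\to P(\beta X)$. I then need to verify $s(\tilde T(z))\subset(\beta f)^{-1}(z)$ for every $z\in\beta Y$, which will establish that $\beta f$ is Milyutin. For $z=y\in Y$ this is immediate since $\tilde T(y)=T_0(y)$ has support in $\beta X$ equal to $s^*(T_0(y))\subset f^{-1}(y)$, using Lemma 2.1(iii).

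The case $z\in\beta Y\setminus Y$ is handled by a limiting argument. Choose a net $y_\alpha\to z$ in $Y$ and a point $x\in\beta X$ with $\beta f(x)\ne z$, and pick disjoint open neighborhoods $U$ of $\beta f(x)$ and $V$ of $z$ in $\beta Y$. Then $y_\alpha$ is eventually in $V$ and the compact set $s^*(T_0(y_\alpha))\subset f^{-1}(y_\alpha)$ is eventually disjoint from the open neighborhood $W=(\beta f)^{-1}(U)$ of $x$ in $\beta X$. Hence for any $h\in C(\beta X)$ vanishing off $W$, $\tilde T(y_\alpha)(h)=\int h\,dT_0(y_\alpha)=0$ eventually; passing to the limit in the weak topology on $P(\beta X)$ yields $\tilde T(z)(h)=0$, and so $x\notin s(\tilde T(z))$. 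This completes $(i)$.

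For $(ii)$, assume $f$ is perfect, which gives $(\beta f)^{-1}(y)=f^{-1}(y)\subset X$ compact for every $y\in Y$. By $(i)$ each $\tilde T(y)$, $y\in Y$, has support inside the compact set $f^{-1}(y)\subset X$, and therefore corresponds under $j_X^{-1}$ to a unique $T(y)\in P_c(X)$ with $s(T(y))\subset f^{-1}(y)$ (again using Lemma 2.1(iii)). The remaining, and in my view main, obstacle is the continuity of $T\colon Y\to P_c(X)$: for unbounded $g\in C(X)$, $T(y)(g)$ is not literally a coordinate of $\tilde T(y)$ in the weak topology on $P(\beta X)$, so the continuity of $\tilde T$ does not directly transfer. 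I will resolve this by localization. Fix $y_0\in Y$ and $g\in C(X)$; compactness of $f^{-1}(y_0)$ gives $M$ with $|g|\le M$ on $f^{-1}(y_0)$, and closedness of $f$ yields an open neighborhood $V$ of $y_0$ with $f^{-1}(V)\subset g^{-1}\bigl((-M-1,M+1)\bigr)$. The truncation $\tilde g(x)=\max\bigl(-M-1,\min(g(x),M+1)\bigr)$ belongs to $C^*(X)$ and agrees with $g$ on $f^{-1}(V)$. For $y'\in V$, $s(T(y'))\subset f^{-1}(V)$, so Lemma 2.1 gives $T(y')(g)=T(y')(\tilde g)=\tilde T(y')(\beta\tilde g)$; since $\beta\tilde g\in C(\beta X)$ and $\tilde T$ is continuous into $P(\beta X)$ with the weak topology, the right-hand side is continuous in $y'$. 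This establishes continuity of $T$ at $y_0$ and finishes $(ii)$.
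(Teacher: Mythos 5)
Your proof is correct, and for part (ii) it takes a genuinely different route from the paper's. For (i) the skeleton is the same: both you and the paper extend the associated map to $\tilde T\colon\beta Y\to P(\beta X)$ using compactness of $P(\beta X)$ and then verify the support condition at points of $\beta Y\setminus Y$. Where you run an explicit net computation (separating $\beta f(x)$ from $z$ by disjoint neighborhoods and passing to the limit in the weak topology), the paper instead observes that the set-valued composition $\beta f\circ\Phi_{\beta X}\circ\tilde T$ is lower semicontinuous by Lemma 3.3 and coincides with the identity on the dense set $Y$, hence everywhere; the two verifications are interchangeable and of comparable length. For (ii) the divergence is real: the paper quotes a result of Choban stating that if $\beta f$ admits a regular averaging operator and $f$ is perfect, then $f$ admits a regular averaging operator $u$ with $\inf\{h(x):x\in f^{-1}(y)\}\leq u(h)(y)\leq\sup\{h(x):x\in f^{-1}(y)\}$, from which compactness of the fibers immediately bounds the supports. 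You instead construct the associated map directly as $T=j_X^{-1}\circ T_0$ and prove its continuity into $P_c(X)$ by hand, truncating an unbounded $g\in C(X)$ near a fiber: closedness of $f$ gives a neighborhood $V$ of $y_0$ with $f^{-1}(V)\subset g^{-1}\bigl((-M-1,M+1)\bigr)$, and Lemma 2.1(ii) (applicable since regular functionals are uniformly continuous on $C^*(X)$) gives $T(y')(g)=T(y')(\tilde g)$ for $y'\in V$, reducing continuity to that of $\tilde T$ paired with $\beta\tilde g\in C(\beta X)$. Your version is self-contained and exhibits the associated $P_c(X)$-valued map explicitly, at the cost of having to justify the transfer through the bijection $j_X$ and the localization argument, both of which you carry out correctly; the paper's version is shorter but rests on an external citation.
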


\begin{proof}
Let $T\colon Y\to P_c^*(X)$ be a map associated with $f$. To prove
(i), observe that $P_c^*(i):P_c^*(X)\to P_c(\beta X)$ is an
embedding, where $i:X\to\beta X$ is the standard embedding (see
Proposition 2.3(iii)). Because $P_c(\beta X)=P(\beta X)$ is compact,
we can extend $T$ to a map $\tilde{T}\colon\beta Y\to P(\beta X)$.
It suffices to show that $\tilde{T}$ is a map associated with $\beta
f$. To this end, consider the lsc map $\Phi=\beta f\circ\Phi_{\beta
X}\circ\tilde{T}\colon\beta Y\to\beta Y$. Since $\Phi$ is lsc and
$\Phi(y)=y$ for all $y\in Y$, $\Phi(y)=y$ for any $y\in\beta Y$.
This means that the support of any $\tilde{T}(y)$, $y\in\beta Y$, is
contained in $(\beta f)^{-1}(y)$. So, $\beta f$ is a Milyutin map.

The proof of (ii) follows from $(i)$ and the following result of
Choban \cite[Proposition 1.1]{co}: if $\beta f$ admits a regular
averaging operator and $f$ is perfect, then $f$ admits a regular
averaging operator $u\colon C(X)\to C(Y)$ such that
$$\inf\{h(x):x\in f^{-1}(y)\}\leq u(h)(y)\leq\sup\{h(x):x\in
f^{-1}(y)\}$$ for every $h\in C(X)$ and $y\in Y$. This implies that
the support of each linear map $T(y)\colon C(X)\to\mathbb R$, $y\in
Y$, defined by (3), is contained in $f^{-1}(y)$. Hence,
$s\big(T(y)\big)$ is compact because so is $f^{-1}(y)$ (recall that
$f$ is perfect). Therefore, $f$ is a Milyutin map.
\end{proof}

\begin{pro}
Let $f\colon X\to Y$ be a Milyutin map. Then, in each of the
following cases $f$ is strongly Milyutin: $($i$)$ $f^{-1}(K)$ is
compact for every compact set $K\subset Y$; $($ii$)$ every closed
and bounded subset of $X$ is compact.
\end{pro}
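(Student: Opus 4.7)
The plan is to construct the desired map $g\colon P_c(Y)\to P_c(X)$ directly from the regular averaging operator witnessing that $f$ is Milyutin. Let $T\colon Y\to P_c(X)$ be a map associated with $f$, so $s(T(y))\subset f^{-1}(y)$, and let $u\colon C(X)\to C(Y)$ be the corresponding regular averaging operator with compact supports, related by $u(h)(y)=T(y)(h)$. I would set $g(\mu)(h):=\mu(u(h))$ for $\mu\in P_c(Y)$ and $h\in C(X)$; equivalently, $g(\mu)=\mu\circ u$.

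First, some routine verifications. The functional $g(\mu)$ is non-negative: if $h\ge 0$, each $T(y)$ being regular and non-negative gives $u(h)(y)=T(y)(h)\ge 0$, so $u(h)\ge 0$ and $\mu(u(h))\ge 0$. Also $g(\mu)(1)=\mu(u(1))=\mu(1)=1$ because $u(1)\equiv 1$ (each $T(y)$ is a probability measure). Continuity of $g$ in the pointwise topologies is automatic: for each fixed $h\in C(X)$, $u(h)\in C(Y)$, so $\mu\mapsto\mu(u(h))$ is continuous on $P_c(Y)$ by definition.

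The main step is to locate $s(g(\mu))$ and show it is a compact subset of $f^{-1}(s(\mu))$. Fix $\mu\in P_c(Y)$ and consider $A=\bigcup_{y\in s(\mu)}s(T(y))\subset f^{-1}(s(\mu))$. In case (i), I would let $K=f^{-1}(s(\mu))$, which is compact since $s(\mu)$ is compact. In case (ii), the image $T(s(\mu))\subset P_c(X)$ is compact, so by \cite[Proposition 3.1]{vv} the set $A$ is bounded in $X$; then $K:=\overline{A}^{X}$ is closed and bounded, hence compact by hypothesis, and $K\subset f^{-1}(s(\mu))$ by continuity of $f$. To prove $s(g(\mu))\subset K$, given any $x\in\beta X\setminus K$ pick an open neighborhood $U$ of $x$ in $\beta X$ disjoint from $K$. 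For any $h\in C(X)$ with $h(X-U)=0$: for each $y\in s(\mu)$, $s(T(y))\subset K$ is disjoint from $U$, so $h$ vanishes on $s(T(y))$ and Lemma 2.1(ii) gives $u(h)(y)=T(y)(h)=0$. Hence $u(h)$ vanishes on $s(\mu)$, and a second application of Lemma 2.1(ii) to $\mu$ yields $g(\mu)(h)=\mu(u(h))=0$, so $x\notin s(g(\mu))$. This yields $s(g(\mu))\subset K\subset f^{-1}(s(\mu))$, so $g(\mu)\in P_c(X)$ with the required support property.

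The principal obstacle is case (ii): compactness of $f^{-1}(s(\mu))$ is not given, so one must bootstrap from the compactness of the image $T(s(\mu))$ in $P_c(X)$ via \cite[Proposition 3.1]{vv}, which upgrades compactness of a subset of $P_c(X)$ to boundedness of the union of its supports in $X$; only then does hypothesis (ii) deliver a compact $K$. The two nested applications of Lemma 2.1(ii)---one at the level of each $T(y)$, one at the level of $\mu$---are routine but conceptually essential, as they propagate the vanishing of $h$ on supports down to the vanishing of $g(\mu)(h)$.
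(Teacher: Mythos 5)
Your proposal is correct and follows essentially the same route as the paper: define $\tilde g(\mu)(h)=\mu(u(h))$, use \cite[Proposition 3.1]{vv} to get boundedness of the union of supports over $s(\mu)$, and conclude compactness of the containing set from hypothesis (i) or (ii). Your two nested applications of Lemma 2.1(ii) merely spell out the step the paper leaves as ``one can check.''
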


\begin{proof}
Let $u\colon C(X)\to C(Y)$, $u(h)(y)=g(y)(h)$, be a corresponding
regular averaging operator with compact supports, where $g\colon
Y\to P_c(X)$ is a map associated with $f$. We are going to extend
$g$ to a map $\tilde{g}\colon P_c(Y)\to P_c(X)$ such that
$s\big(\tilde{g}(\mu)\big)\subset f^{-1}\big(s(\mu)\big)$ for all
$\mu\in P_c(Y)$). Let $\mu\in P_c(Y)$ and $K=s(\mu)\subset Y$. Then
$g(K)$ is a compact subset of $P_c(X)$. Hence, by \cite[Proposition
3.1]{vv}, $H=\overline{\cup\{s\big(g(y)\big):y\in K\}}$ is a bounded
and closed subset of $X$. Since $s\big(g(y)\big)\subset f^{-1}(y)$
for all $y\in Y$, $H\subset f^{-1}(K)$. So, in each of the cases (i)
and (ii), $H$ is compact. Define $\tilde{g}(\mu)\colon
C(X)\to\mathbb R$ to be the linear functional
$\tilde{g}(\mu)(h)=\mu(u(h))$, $h\in C(X)$. One can check that
$\tilde{g}(\mu)(h)=0$ provided $h(H)=0$. This means that the support
of $\tilde{g}(\mu)$ is a compact subset of $H$, so
$\tilde{g}(\mu)\in P_c(X)$. Moreover, $\tilde{g}$, considered as a
map from $P_c(Y)$ to $P_c(X)$ is continuous and satisfies the
inclusions $s\big(\tilde{g}(\mu)\big)\subset
f^{-1}\big(s(\mu)\big)$, $\mu\in P_c(Y)$. Therefore, $f$ is strongly
Milyutin.
\end{proof}

A map $f\colon X\to Y$ is said to be {\em $0$-invertible} \cite{hf}
if for any space $Z$ with $\dim Z=0$ and any map $p\colon Z\to Y$
there exists a map $q\colon Z\to X$ such that $f\circ q=p$. Here,
$\dim Z=0$ means that $\dim\beta Z=0$. We say that $f\colon X\to Y$
has {\em a metrizable kernel} if there exists a metrizable space $M$
and an embedding $X\subset Y\times M$ such that $\pi_Y|X=f$, where
$\pi_Y\colon Y\times M\to Y$ is the projection.

Next theorem is a generalization of \cite[Theorem 3.4]{dt} and
\cite[Corollary 1]{hf}.

\begin{thm}
Let $f\colon X\to Y$ be a surjection with a metrizable kernel and
$Y$ a paracompact space. Then the following conditions are
equivalent:
\begin{itemize}
\item[(i)] $f$ is $($weakly$)$ Milyutin;
\item[(ii)] The set-valued map $f^{-1}\colon Y\to X$ admits a lsc
 compact-valued selection;
\item[(iii)] $f$ is $0$-invertible.
\end{itemize}
\end{thm}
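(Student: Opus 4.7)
The plan is to close the chain weakly Milyutin $\Rightarrow$ (ii) $\Rightarrow$ (iii) $\Rightarrow$ Milyutin. Combined with the obvious implication Milyutin $\Rightarrow$ weakly Milyutin, this yields the equivalence of all three conditions in both variants of (i).

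For (i) $\Rightarrow$ (ii), I would start from an associated map $T\colon Y\to P_c^*(X)$ (provided by the weak Milyutin hypothesis) and define $\Phi(y)=s^*(T(y))$. Each $\Phi(y)$ is a compact subset of $f^{-1}(y)$ by the definition of ``associated'', and the identity $\Phi=\Phi_X^*\circ T$, together with Lemma 3.3 (stating $\Phi_X^*$ is lsc) and the continuity of $T$, gives the required lower semi-continuity.

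For (ii) $\Rightarrow$ (iii), I would use the metrizable kernel via Michael's zero-dimensional selection theorem. Fix an embedding $X\subset Y\times M$ with $M$ metric and, after replacing $M$ by its completion, assume $M$ is completely metrizable. Given $p\colon Z\to Y$ with $\dim\beta Z=0$, the composition $\Psi=\Phi\circ p\colon Z\to X$ is lsc and compact-valued, and $\Psi(z)\subset\{p(z)\}\times M_z$ for some compact $M_z\subset M$. Projecting to the second factor produces an lsc compact-valued map $\Psi_M\colon Z\to M$; Michael's zero-dimensional selection theorem (applicable because $\dim\beta Z=0$ and $M$ is completely metrizable) yields a continuous selection $s\colon Z\to M$. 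Then $q(z)=(p(z),s(z))$ lies in $\Psi(z)\subset X$ and satisfies $f\circ q=p$.

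For (iii) $\Rightarrow$ (i), which is the main obstacle, I would precompose $f$ with a $0$-dimensional Milyutin cover of $Y$. Since $Y$ is paracompact, one can construct, using a locally finite cozero cover together with a subordinated partition of unity on a suitable disjoint refinement, a $0$-dimensional paracompact space $Z$ and a perfect Milyutin surjection $\sigma\colon Z\to Y$ (this is the machinery underlying Corollaries 3.10--3.11). Applying $0$-invertibility of $f$ to $\sigma$ produces $q\colon Z\to X$ with $f\circ q=\sigma$. If $T_\sigma\colon Y\to P_c(Z)$ is associated with $\sigma$, then $T_f:=P_c(q)\circ T_\sigma\colon Y\to P_c(X)$ is continuous and satisfies
\[
s\bigl(T_f(y)\bigr)\subset q\bigl(s(T_\sigma(y))\bigr)\subset q\bigl(\sigma^{-1}(y)\bigr)\subset f^{-1}(y),
\]
so $T_f$ is associated with $f$ and $f$ is Milyutin. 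The delicate points are (a) producing the $0$-dimensional Milyutin cover $\sigma$ in the generality of paracompact $Y$, and (b) ensuring that Michael's zero-dimensional selection theorem applies under the weaker hypothesis $\dim\beta Z=0$; both are where the hypotheses ``metrizable kernel'' and ``paracompact'' are consumed.
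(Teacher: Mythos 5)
Your overall route is the paper's route: the chain weakly Milyutin $\Rightarrow$ (ii) $\Rightarrow$ (iii) $\Rightarrow$ Milyutin $\Rightarrow$ weakly Milyutin, with Lemma 3.3 giving (i)$\Rightarrow$(ii), Michael's zero-dimensional selection theorem giving (ii)$\Rightarrow$(iii), and a $0$-dimensional perfect Milyutin cover of $Y$ giving (iii)$\Rightarrow$(i). However, the two ``delicate points'' you flag at the end are not optional polish; each is a genuine gap, and you leave both unresolved.

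First, in (ii)$\Rightarrow$(iii) you apply Michael's zero-dimensional selection theorem to the lsc compact-valued map $\Psi_M\colon Z\to M$ directly on $Z$. That theorem requires the domain to be \emph{paracompact} and zero-dimensional; your $Z$ is only a Tychonoff space with $\dim\beta Z=0$, and replacing $M$ by its completion does nothing to help (the values are compact, hence already complete, so completeness of $M$ was never the issue). The paper's fix is precisely where the paracompactness of $Y$ is consumed: set $Z_1=(\beta p)^{-1}(Y)$, which contains $Z$, satisfies $\beta Z_1=\beta Z$ (so $\dim Z_1=0$), and is paracompact because $(\beta p)|Z_1\colon Z_1\to Y$ is perfect and $Y$ is paracompact. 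One then selects from $\pi_M\circ\Phi\circ p_1$ on $Z_1$ and restricts to $Z$. Without this step the selection theorem simply does not apply.

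Second, in (iii)$\Rightarrow$(i) you propose to build the perfect $0$-dimensional Milyutin surjection $\sigma\colon Z\to Y$ ``using the machinery underlying Corollaries 3.10--3.11.'' Those corollaries cover products of metrizable spaces and $p$-paracompact spaces, not arbitrary paracompact spaces, and your one-sentence sketch via partitions of unity is not a construction. The paper instead invokes the theorem of Repov\v{s}--Semenov--\v{S}\v{c}epin that every paracompact space is the image of a $0$-dimensional paracompact space under a perfect \emph{weakly} Milyutin map, and then upgrades this to Milyutin via Proposition 3.4(ii) (perfect weakly Milyutin maps are Milyutin). Once $\sigma$ is in hand, your computation $s(P_c(q)(T_\sigma(y)))\subset q(\sigma^{-1}(y))\subset f^{-1}(y)$ is exactly the paper's argument and is fine.
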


\begin{proof}
$(i)\Rightarrow (ii)$ Let $f$ be weakly Milyutin and $T\colon Y\to
P_c^*(X)$ is a map associated with $f$. By Lemma 3.3, the map
$\Phi_X^*\colon P_c^*(X)\to X$ is lsc, so is the map $\Phi_X^*\circ
T$. Moreover, $\Phi_X^*\big(T(y)\big)=s^*\big(T(y)\big)\subset
f^{-1}(y)$ for all $y\in Y$. Hence, $\Phi_X^*\circ T$ is a
compact-valued selection of $f^{-1}$.

$(ii)\Rightarrow (iii)$ Suppose $M$ is a metrizable space such that
$X\subset Y\times M$ and $\pi_Y|X=f$. Suppose also that $f^{-1}$
admits a compact-valued lsc selection $\Phi\colon Y\to X$. To show
that $f$ is 0-invertible, take a map $p\colon Z\to Y$  with $\dim
Z=0$, and let $Z_1=(\beta p)^{-1}(Y)$. Then $Z_1$ is paracompact (as
a perfect preimage of $Y$) and $\dim Z_1=0$ because $\beta Z_1=\beta
Z$ is 0-dimensional. The set-valued map $\pi_M\circ\Phi\circ
p_1\colon Z_1\to M$ is lsc and compact-valued, where $\pi_M\colon
Y\times M\to M$ is the projection and $p_1=(\beta p)|Z_1$. According
to \cite{m1}, $\pi_M\circ\Phi\circ p_1$ admits a (single-valued)
continuous selection $q_1\colon Z_1\to M$. Finally, the map $q\colon
Z\to X$, $q(z)=\big(p(z),q_1(z)\big)$ is the required lifting of
$p$, i.e. $f\circ q=p$.

$(iii)\Rightarrow (i)$ By \cite{rss}, there exists a perfect weakly
Milyutin map $p\colon Z\to Y$ with $Z$ being a 0-dimensional
paracompact. Then, by Proposition 3.4(ii), $p$ is a Milyutin map.
Since $f$ is 0-invertible, there exists a map $g\colon Z\to X$ with
$f\circ g=p$. If $T\colon Y\to P_c(Z)$ is a map associated with $p$,
then $\tilde{T}=P_c(g)\circ T\colon Y\to P_c(X)$ is a map associated
with $f$ because $s\big(\tilde{T}(y)\big)\subset
g\big(p^{-1}(y)\big)\subset f^{-1}(y)$ for all $y\in Y$. Hence, $f$
is a Milyutin map.
\end{proof}

\begin{cor}
Let $f\colon X\to Y$ be a surjective map such that either $X$ and
$Y$ are metrizable or $f$ is perfect. Then the following are
equivalent: $(i)$ $f$ is weakly Milyutin; $(ii)$ $f$ is Milyutin;
$(iii)$ $f$ is strongly Milyutin.
\end{cor}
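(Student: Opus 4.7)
The plan is to split into the two cases allowed by the hypotheses. In both cases, the implications $(iii)\Rightarrow(ii)\Rightarrow(i)$ will be immediate: every strongly Milyutin map is Milyutin by definition, and every Milyutin map is weakly Milyutin via the assignment $T\mapsto j_X\circ T$ pointed out in the discussion preceding Proposition 3.1. So the real content is to establish $(i)\Rightarrow(ii)$ and $(ii)\Rightarrow(iii)$ in each case.

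When $f$ is perfect, the implication $(i)\Rightarrow(ii)$ is just a quotation of Proposition 3.4(ii). For $(ii)\Rightarrow(iii)$, I would observe that a perfect map satisfies $f^{-1}(K)$ compact for every compact $K\subset Y$, which is exactly hypothesis (i) of Proposition 3.5; invoking that proposition then delivers strong Milyutinness.

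When $X$ and $Y$ are metrizable, I would derive $(i)\Rightarrow(ii)$ from Theorem 3.7. To apply it, I first exhibit a metrizable kernel for $f$: the graph map $x\mapsto(f(x),x)$ embeds $X$ into $Y\times X$ with $\pi_Y|X=f$, so the choice $M=X$ works. Since $Y$ is metrizable and hence paracompact, Theorem 3.7 applies, and an inspection of its $(iii)\Rightarrow(i)$ step shows that the constructed $\widetilde T=P_c(g)\circ T$ is in fact a map associated with $f$ in the sense defining a Milyutin map (not merely a weakly Milyutin one). For $(ii)\Rightarrow(iii)$ I would again appeal to Proposition 3.5, this time in its version (ii), so the only substantive point is to verify that every closed bounded subset of the metric space $X$ is compact. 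This is the step I expect to be the main, though modest, obstacle, and I plan to dispatch it as follows: in a metric, hence normal, space every closed subset is $C$-embedded by Tietze's theorem, so a closed set $A\subset X$ is bounded in the $C(X)$-sense precisely when every element of $C(A)$ is bounded, i.e.\ when $A$ is pseudocompact; but a pseudocompact metrizable space is compact. Once this is recorded, Proposition 3.5(ii) finishes the argument.
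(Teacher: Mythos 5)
Your argument is correct and follows the paper's own route exactly: the perfect case via Propositions 3.4(ii) and 3.5(i), and the metrizable case via Theorem 3.6 (which you cite as 3.7) combined with Proposition 3.5(ii). The details you supply --- the graph embedding $x\mapsto (f(x),x)$ furnishing a metrizable kernel, and the Tietze/pseudocompactness argument showing that closed bounded subsets of a metric space are compact --- are precisely the routine verifications the paper leaves implicit.
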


\begin{proof}
If $X$ and $Y$ are metrizable, this follows from Proposition 3.5 and
Theorem 3.6. In case $f$ is perfect, we apply Propositions 3.4 and
3.5.
\end{proof}

A space $Z$ is called {\em a $k_{\mathbb R}$-space} if every
function on $Z$ is continuous provided it is continuous on every
compact subset of $Z$.

\begin{thm}
The product $f$ of any family $\{f_\alpha\colon X_\alpha\to
Y_\alpha, \alpha\in A\}$  of weakly Milyutin maps is also weakly
Milyutin. If, in addition, $Y=\prod\{Y_\alpha:\alpha\in A\}$ is a
$k_{\mathbb R}$-space and for every $\alpha\in A$ the closed and
bounded subsets of $X_\alpha$ are compact, then $f$ is Milyutin
provided each $f_\alpha$ is Milyutin.
\end{thm}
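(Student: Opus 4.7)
The plan is to construct the associated map $T\colon Y\to P_c^*(X)$ (respectively, $T\colon Y\to P_c(X)$) as the ``tensor product'' of the associated maps $T_\alpha$ for the factors. Fix for each $\alpha\in A$ a continuous $T_\alpha\colon Y_\alpha\to P_c^*(X_\alpha)$ (respectively, $T_\alpha\colon Y_\alpha\to P_c(X_\alpha)$) with $s^*(T_\alpha(y_\alpha))\subset f_\alpha^{-1}(y_\alpha)$, and for $y=(y_\alpha)\in Y$ set $T(y):=\bigotimes_\alpha T_\alpha(y_\alpha)$, the Kolmogorov product of the probability measures $T_\alpha(y_\alpha)$, each regarded as a Radon probability on its compact support $K_\alpha(y):=s^*(T_\alpha(y_\alpha))\subset X_\alpha$. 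Then $T(y)$ is a Radon probability on the compactum $K(y):=\prod_\alpha K_\alpha(y)\subset X$, and integration on $K(y)$ gives a regular linear functional on $C^*(X)$ (and on $C(X)$) whose support is $K(y)\subset\prod_\alpha f_\alpha^{-1}(y_\alpha)=f^{-1}(y)$. By Lemma~2.1(iii) this support is the same whether computed with respect to $C(X)$ or $C^*(X)$, so $T(y)\in P_c^*(X)\cap P_c(X)$.

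The main work is continuity of $T$. For the weakly Milyutin statement the test space is $C^*(X)$. For a cylinder $h=\hat h\circ\pi_F$ with $F\subset A$ finite and $\hat h\in C^*(\prod_{\alpha\in F}X_\alpha)$, Fubini gives $T(y)(h)=(\bigotimes_{\alpha\in F}T_\alpha(y_\alpha))(\hat h)$, and continuity in $y$ follows from continuity of each $T_\alpha$ and continuity of the finite product of probability measures in the weak topology. For a general $h\in C^*(X)$ I would approximate $h$ uniformly on suitable compacta by cylinder functions using Stone--Weierstrass, combine this with the probability bound $|T(y)(h-h')|\leq\sup|h-h'|$ on the support, and telescope the difference $T(y)(h)-T(y_0)(h)$ one coordinate at a time to pass from the finite to the infinite case.

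For the stronger Milyutin statement the test space $C(X)$ contains unbounded functions, so the approximation step above is insufficient, and the extra hypotheses are used precisely to trap the supports of $T(y)$ in a fixed compactum. Since $Y$ is a $k_{\mathbb R}$-space, it suffices to check continuity of $y\mapsto T(y)(h)$ on each compact $K\subset Y$. For such $K$, the projection $K_\alpha:=\pi_\alpha^Y(K)\subset Y_\alpha$ is compact, hence $T_\alpha(K_\alpha)\subset P_c(X_\alpha)$ is compact, and \cite[Proposition~3.1]{vv} yields that $\bigcup\{s(T_\alpha(y_\alpha)):y_\alpha\in K_\alpha\}$ is bounded in $X_\alpha$; by hypothesis its closure $B_\alpha$ is compact. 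So for every $y\in K$ we have $K(y)\subset B:=\prod_\alpha B_\alpha\subset X$, a fixed compactum. On the subspace $\{\mu\in P_c(X):s(\mu)\subset B\}$ the pointwise $C(X)$-topology coincides (by Tietze extension from the closed subset $B$ of the compact normal space $\beta X$) with the weak topology of $P(B)=P(\prod_\alpha B_\alpha)$, and continuity of $T|_K$ then reduces to continuity of the standard product-of-measures map $\prod_\alpha P(B_\alpha)\to P(\prod_\alpha B_\alpha)$, which is classical.

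The main obstacle is the weakly Milyutin continuity step, where the supports $K(y)$ may drift without bound in $X$ as $y$ varies and one cannot reduce to the case of measures supported in a fixed compactum. The probability-measure bound on $T(y)$ together with the telescoping-in-coordinates argument is what makes the passage from cylinder test functions to general $h\in C^*(X)$ possible; in the Milyutin case the hypotheses eliminate this difficulty by forcing, on each compact piece of $Y$, the supports of $T(y)$ into a common compact subset of $X$.
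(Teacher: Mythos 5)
Your construction of $T(y)$ as the product of the measures $T_\alpha(y_\alpha)$ is the right one, and your treatment of the second half (trapping the supports over each compact $K\subset Y$ in a fixed compactum $\prod_\alpha B_\alpha$ via \cite[Proposition 3.1]{vv} and the $k_{\mathbb R}$ hypothesis) is essentially the paper's argument. The genuine gap is in the continuity of $T\colon Y\to P_c^*(X)$ for the weakly Milyutin half, precisely at the point you flag as ``the main obstacle.'' Your proposed fix does not work as described. Stone--Weierstrass produces cylinder approximants of $h\in C^*(X)$ only uniformly on a \emph{fixed} compactum; since the supports $K(y)$ drift as $y$ varies, an approximant $h'$ chosen to be close to $h$ on $K(y_0)$ gives no control over $\sup_{K(y)}|h-h'|$ for $y$ near $y_0$, so the bound $|T(y)(h-h')|\le\sup_{K(y)}|h-h'|$ is useless off the base point. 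The telescoping step has the same defect: changing one coordinate at a time reduces each increment to testing $T_\alpha(y_\alpha)$ against the partial integral $x_\alpha\mapsto\int h(x_\alpha,x')\,d(\bigotimes_{\beta\neq\alpha}T_\beta(\cdot))(x')$, a test function that itself depends on the remaining coordinates of $y$; pointwise (weak) continuity of $T_\alpha$ controls convergence against a \emph{fixed} function only, and no equicontinuity is available. Note that a general $h\in C^*(\prod X_\alpha)$ need not be uniformly approximable by cylinder functions on all of $X$ (e.g.\ the indicator of the diagonal in $\mathbb N\times\mathbb N$), so there is no global version of the approximation either.

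The paper avoids this entirely by passing to the compactifications: each $\beta f_\alpha$ is Milyutin (Proposition 3.4), Pelczynski's theorem \cite{p} supplies a regular averaging operator for $\tilde f=\prod\beta f_\alpha$ on the \emph{compact} product $\tilde X=\prod\beta X_\alpha$ --- where Stone--Weierstrass does apply globally and the product-of-measures map is genuinely continuous --- and the resulting $\tilde T\colon\tilde Y\to P(\tilde X)$ carries $Y$ into the subspace $H$ of measures supported in $X$. The crucial missing ingredient in your sketch is the final transfer: the map $\theta=P(\pi)|P_c^*(X)\colon P_c^*(X)\to H$ is a homeomorphism (\cite[Proposition 1]{ch1}), i.e.\ for measures with compact support in $X$, convergence tested against the restrictions of $C(\tilde X)$ already implies convergence against all of $C^*(X)$. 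That identification is exactly the statement your telescoping heuristic is trying to prove, and it is a nontrivial cited result rather than something that follows from the probability bound. Until you either invoke such a result or give an independent proof of it, the weakly Milyutin half of your argument is incomplete.
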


\begin{proof}
Let $T_\alpha:Y_\alpha\to P_c^*(X_\alpha)$ be a map associated with
$f_\alpha$ for each $\alpha$. Then, by Proposition 3.4, $\beta
f_\alpha$ is a Milyutin map and $\beta T_\alpha\colon\beta
Y_\alpha\to P(\beta X_\alpha)$ is associated with $\beta f_\alpha$.
So, $u_\alpha\colon C(\beta X_\alpha)\to C(\beta Y_\alpha)$,
$u_\alpha(h)(y)=\beta T_\alpha(y)(h)$, $y\in\beta Y_\alpha$ and
$h\in C(\beta X_\alpha)$, is a regular averaging operator for $\beta
f_\alpha$. Let $X=\prod\{X_\alpha:\alpha\in A\}$,
$\tilde{X}=\prod\{\beta X_\alpha:\alpha\in A\}$,
$\tilde{Y}=\prod\{\beta Y_\alpha:\alpha\in A\}$ and
$\tilde{f}=\prod\{\beta f_\alpha:\alpha\in A\}$. According to
\cite{p}, there exists a regular averaging operator
$u:C(\tilde{X})\to C(\tilde{Y})$ for $\tilde{f}$ such that $u(h\circ
p_\alpha)=u_\alpha(h)\circ q_\alpha$, $\alpha\in A$, $h\in C(\beta
X_\alpha)$, where $p_\alpha:\tilde{X}\to\beta X_\alpha$ and
$q_\alpha:\tilde{Y}\to\beta Y_\alpha$ are the projections. This
implies that, if $\tilde{T}\colon\tilde{Y}\to P(\tilde{X})$ is the
map associated to $\tilde{f}$ and generated by $u$, we have
$s\big(\tilde{T}(y)\big)\subset\prod\{s\big(T_\alpha(q_\alpha(y))\big):\alpha\in
A\}$, $y\in Y$. Hence, $s\big(\tilde{T}(y)\big)\subset f^{-1}(y)$
for every $y\in Y$. So, $\tilde{T}$ maps $Y$ into the subspace $H$
of $P(\tilde{X})$ consisting of all measures $\mu\in P(\tilde{X})$
with $s(\mu)\subset X$. Now, let $\pi\colon\beta X\to\tilde{X}$ be
the natural map and $P(\pi)\colon P(\beta X)\to P(\tilde{X})$. Then,
$\theta=P(\pi)|P_c^*(X)\colon P_c^*(X)\to H$ is a homeomorphism (for
more general result see \cite[Proposition 1]{ch1}). Therefore,
$T=\theta^{-1}\circ (\tilde{T}|Y)\colon Y\to P_c^*(X)$ is a map
associated with $f$. Thus, $f$ is weakly Milyutin.

Suppose now that $Y$ is a $k_{\mathbb R}$-space, $f_\alpha$ are
Milyutin maps and the closed and bounded subsets of each $X_\alpha$
are compact. We already proved that there exists a regular averaging
operator $u\colon C^*(X)\to C^*(Y)$ for $f$ and a corresponding to
$u$ map $T\colon Y\to P_c^*(X)$ associated with $f$ such that
$s^*(T(y))\subset\prod\{s(T_\alpha(q_\alpha(y))):\alpha\in
A\}\subset f^{-1}(y)$ for every $y\in Y$. Here, each $T_\alpha\colon
Y_\alpha\to P_c(X_\alpha)$ is a map associated with $f_\alpha$
(recall that $f_\alpha$ are Milyutin maps). For any $h\in C(X)$ and
$n\geq 1$ define $h_n\in C^*(X)$ by $h_n(x)=h(x)$ if $|h(x)|\leq n$,
$h_n(x)=n$ if $h(x)\geq n$ and $h_n(x)=-n$ if $h(x)\leq -n$. Since
for every $y\in Y$ the support $s^*(T(y))\subset X$ is compact,
$h|s^*(T(y))=h_n|s^*(T(y))$ with $n\geq n_0$ for some $n_0$. Hence,
the formula $v(h)(y)=\lim u(h_n)(y)$, $y\in Y$, defines a function
on $Y$. Let us show that $v(h)$ is continuous. Since $Y$ is a
$k_{\mathbb R}$-space, it suffices to prove that $v(h)$ is
continuous on every compact set $K\subset Y$. Then each of the sets
$T_\alpha(K_\alpha)\subset P_c(X_\alpha)$ is compact, where
$K_\alpha=q_\alpha(K)$.  By \cite[Proposition 3.1]{vv},
$Z_\alpha=\overline{\cup\{s(\mu):\mu\in T_\alpha(K_\alpha)\}}$ is
bounded in $X_\alpha$ and, hence compact (recall that all closed and
bounded subsets of $X_\alpha$ are compact). Let $Z$ be the closure
in $X$ of the set $\cup\{s^*(\mu):\mu\in T(K)\}$. Since
$Z\subset\prod\{Z_\alpha:\alpha\in A\}$, $Z$ is also compact. So,
there exists $m$ such that $h|Z=h_n|Z$ for all $n\geq m$. This
implies that $v(h)|K=u(h_m)|K$. Hence, $v(h)$ is continuous on $K$.
Since for every $y\in Y$ the support of $T(y)$ is  compact and each
$u(h)(y)$, $h\in C^*(X)$, depends on $h|s^*(T(y))$, $v\colon C(X)\to
C(Y)$ is linear and the support of $T'(y)\in P_c(X)$ is contained in
$s^*(T(y))\subset f^{-1}(y)$, where $T'\colon Y\to P_c(X)$ is
defined by $T'(y)(h)=v(h)(y)$, $h\in C(X)$, $y\in Y$. Moreover, it
follows from the definition of $v$ that it is regular and
$v(\phi\circ f)=\phi$ for every $\phi\in C(Y)$. Therefore, $v$ is a
regular averaging operator for $f$ with compact supports
\end{proof}

\begin{cor}
A product of perfect Milyutin maps is also Milyutin.
\end{cor}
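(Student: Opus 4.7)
The plan is to combine the first (unconditional) half of Theorem 3.8 with Proposition 3.4(ii), using the classical fact that an arbitrary product of perfect maps is perfect to mediate between them. Concretely, let $\{f_\alpha\colon X_\alpha\to Y_\alpha:\alpha\in A\}$ be a family of perfect Milyutin maps and set $f=\prod_{\alpha\in A}f_\alpha\colon\prod_\alpha X_\alpha\to\prod_\alpha Y_\alpha$.

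First, I would observe that every Milyutin map is automatically weakly Milyutin: if $T\colon Y_\alpha\to P_c(X_\alpha)$ is associated with $f_\alpha$, then $j_{X_\alpha}\circ T\colon Y_\alpha\to P_c^*(X_\alpha)$ witnesses that $f_\alpha$ is weakly Milyutin, as already noted in the paragraph introducing the terminology. The first assertion of Theorem 3.8 then applies without any side condition and yields that $f$ is weakly Milyutin.

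Next, I would invoke the classical theorem of Frol\'{\i}k that the product of an arbitrary family of perfect maps is perfect: the fibres of $f$ are products of the compact fibres $f_\alpha^{-1}(y_\alpha)$ and are therefore compact by Tychonoff's theorem, while the closedness of $f$ is a standard consequence of the closedness of each factor. Hence $f$ is perfect.

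Having $f$ both perfect and weakly Milyutin, Proposition 3.4(ii) immediately upgrades the conclusion to the statement that $f$ is a Milyutin map, completing the proof. I do not anticipate any real obstacle here; the argument is essentially a two-line assembly, and the only thing that requires mild care is citing the perfectness of an arbitrary product of perfect maps, which is not proved in the paper but is standard.
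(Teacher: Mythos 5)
Your proposal is correct and follows essentially the same route as the paper: the paper's proof also combines the unconditional first half of Theorem 3.8 (product of weakly Milyutin maps is weakly Milyutin) with the fact that a product of perfect maps is perfect, and then upgrades via Corollary 3.7, whose perfect case is itself just Proposition 3.4(ii). Your direct citation of Proposition 3.4(ii) instead of Corollary 3.7 is an immaterial difference.
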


\begin{proof}
Since any product of perfect maps is perfect, the proof follows from
Corollary 3.7 and Theorem 3.8.
\end{proof}

\begin{cor}
Let $Y=\prod\{Y_\alpha:\alpha\in A\}$ be a product of metrizable
spaces. Then there exists a $0$-dimensional product $X$ of
metrizable spaces space and a $0$-invertible perfect Milyutin map
$f\colon X\to Y$.
\end{cor}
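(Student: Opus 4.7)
The plan is to reduce to the single-factor case, exploiting that each relevant property (being $0$-dimensional, being a product of metric spaces, perfectness, being Milyutin, and $0$-invertibility) passes to products.

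First I would construct, for each index $\alpha$, a $0$-dimensional metrizable $X_\alpha$ together with a perfect $0$-invertible Milyutin map $f_\alpha\colon X_\alpha\to Y_\alpha$. The existence of a $0$-dimensional paracompact $Z_\alpha$ and a perfect weakly Milyutin surjection $p_\alpha\colon Z_\alpha\to Y_\alpha$ is furnished by \cite{rss} (this is exactly what is invoked in the proof of Theorem 3.6(iii)$\Rightarrow$(i)). Because $Y_\alpha$ is metrizable and $p_\alpha$ is perfect, the construction of \cite{rss} can be carried out so that $Z_\alpha$ is itself metrizable (one keeps control of the weight/first countability in the fibered construction). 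Proposition 3.4(ii) then upgrades $p_\alpha$ from weakly Milyutin to Milyutin; set $f_\alpha=p_\alpha$ and $X_\alpha=Z_\alpha$. To get $0$-invertibility, I would apply Theorem 3.6 using the tautological metrizable kernel: the graph embedding $X_\alpha\hookrightarrow Y_\alpha\times X_\alpha$, $x\mapsto(f_\alpha(x),x)$, exhibits $f_\alpha$ as a restriction of the coordinate projection with metric factor $M_\alpha=X_\alpha$, and $Y_\alpha$ is metric hence paracompact.

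Next I would assemble $X=\prod_\alpha X_\alpha$ and $f=\prod_\alpha f_\alpha\colon X\to Y$. By construction $X$ is a product of metric spaces and is $0$-dimensional (a product of $0$-dimensional spaces is $0$-dimensional in the sense that $\dim\beta X=0$ when each $\dim\beta X_\alpha=0$). The map $f$ is perfect since a product of perfect maps is perfect, and Corollary 3.9 delivers that $f$ is Milyutin. Finally, $f$ is $0$-invertible by a direct diagonal argument: given any $Z$ with $\dim Z=0$ and any $p\colon Z\to Y$, let $p_\alpha=\pi_\alpha\circ p$, invoke the $0$-invertibility of each $f_\alpha$ to obtain $q_\alpha\colon Z\to X_\alpha$ with $f_\alpha\circ q_\alpha=p_\alpha$, and let $q=(q_\alpha)_\alpha\colon Z\to X$; then $f\circ q=p$.

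The main obstacle I foresee is the single-factor step, namely ensuring that the preimage $Z_\alpha$ produced by the Milyutin-type construction of \cite{rss} is metrizable rather than merely $0$-dimensional paracompact. Once that refinement is in place, the remainder is a formal assembly via Corollary 3.9 together with coordinatewise lifting, both of which are essentially automatic.
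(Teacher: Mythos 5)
Your overall architecture is exactly the paper's: construct for each $\alpha$ a $0$-dimensional metrizable $X_\alpha$ with a perfect $0$-invertible Milyutin map $f_\alpha\colon X_\alpha\to Y_\alpha$, then assemble the product using Corollary 3.9 (Milyutin-ness), preservation of perfectness under products, and a coordinatewise lifting for $0$-invertibility. The use of the graph embedding $x\mapsto(f_\alpha(x),x)$ to supply the metrizable kernel needed for Theorem 3.6 is also fine and matches what the paper implicitly does.

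The problem is the single-factor step, which you correctly identify as the crux but do not actually close. The result of \cite{rss} that the paper invokes in Theorem 3.6 only yields a $0$-dimensional \emph{paracompact} preimage $Z_\alpha$, and your claim that ``the construction can be carried out so that $Z_\alpha$ is itself metrizable by keeping control of the weight'' is not automatic: a perfect preimage of a metrizable space is in general only a paracompact $p$-space, not metrizable, so perfectness plus metrizability of $Y_\alpha$ plus weight control does not by itself force metrizability of $Z_\alpha$. One genuinely needs a construction that builds $X_\alpha$ with a metrizable kernel over $Y_\alpha$ (e.g.\ as a closed subspace of $Y_\alpha\times B(\tau)$ for a zero-dimensional Baire space $B(\tau)$). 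This is precisely the content of Choban's theorem, and the paper simply cites \cite[Theorem 1.2.1]{co}: for every metrizable $Y_\alpha$ there exist a $0$-dimensional metrizable $X_\alpha$ and a perfect Milyutin map $f_\alpha\colon X_\alpha\to Y_\alpha$. With that citation in place of your refinement of \cite{rss}, the rest of your argument goes through and coincides with the paper's proof.
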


\begin{proof}
By \cite[Theorem 1.2.1]{co}, for every $\alpha\in A$ there exists a
0-dimensional metrizable space $X_\alpha$ and a perfect Milyutin map
$f_\alpha\colon X_\alpha \to Y_\alpha$. Then, by Corollary 3.9,
$f=\prod\{f_\alpha:\alpha\in A\}$ is a perfect Milyutin map from
$X=\prod\{X_\alpha:\alpha\in A\}$ onto $Y$. It is easily seen that
$f$ is $0$-invertible because each $f_\alpha$ is $0$-invertible (see
Theorem 3.6). Moreover, since $\dim X_\alpha=0$ for each $\alpha$,
$\dim X=0$.
\end{proof}

Recall that $X$ is a $p$-paracompact space \cite{a1} if it admits a
perfect map onto a metrizable space.

\begin{cor}
For every $p$-paracompact space $Y$ there exists a $0$-dimensional
$p$-paracompact space $Y$ and a perfect $0$-invertible Milyutin map
$f\colon X\to Y$.
\end{cor}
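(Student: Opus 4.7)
The plan is to embed $Y$ as a closed subspace of a product of metrizable spaces and then pull back the map furnished by Corollary 3.10. Because $Y$ is $p$-paracompact, fix a perfect map $\pi\colon Y\to M$ onto a metrizable $M$, and pick any Tychonoff embedding $\phi\colon Y\to I^\kappa$ into a cube of suitable weight. The diagonal $e=\pi\triangle\phi$ is a closed embedding of $Y$ into $Y'=M\times I^\kappa$: it factors as the graph embedding $Y\hookrightarrow Y\times I^\kappa$ (closed, as $\phi$ is continuous into a Hausdorff space) followed by the perfect map $\pi\times\mathrm{id}_{I^\kappa}$. Now invoke Corollary 3.10 on the product $Y'$ of metrizable spaces to produce a product $X'$ of $0$-dimensional metrizable spaces, together with a perfect, $0$-invertible Milyutin map $f'\colon X'\to Y'$. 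Set $X=(f')^{-1}(e(Y))$ and let $f\colon X\to Y$ be the induced map $e^{-1}\circ (f'|X)$.

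Next I would verify that $X$ and $f$ have the required properties. Because $f'$ is perfect and $e(Y)$ is closed in $Y'$, the set $X$ is closed in $X'$ and $f$ is perfect. For $0$-invertibility of $f$: a map $p\colon Z\to Y$ with $\dim Z=0$ lifts via $f'$ to some $q\colon Z\to X'$, and the equality $f'\circ q=e\circ p$ forces $q(Z)\subset X$. The space $Y'$ is paracompact (product of a metrizable space with a compactum), so its perfect preimage $X'$ is paracompact, hence $X$ is $C$-embedded in $X'$. Since Corollary 3.10 delivers $\dim\beta X'=0$, the closure $\beta X$ of $X$ in $\beta X'$ is a closed subspace of a $0$-dimensional compactum, so $\dim\beta X=0$; thus $X$ is $0$-dimensional. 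Finally $\pi\circ f\colon X\to M$ is perfect onto a metrizable space, so $X$ is $p$-paracompact.

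The main step is showing that $f$ is Milyutin. Let $T'\colon Y'\to P_c(X')$ be a map associated with $f'$. For each $y\in Y$, the support $s(T'(e(y)))$ lies in $(f')^{-1}(e(y))\subset X$. By Proposition 2.3(ii), the inclusion $X\hookrightarrow X'$ induces a closed embedding $P_c(X)\hookrightarrow P_c(X')$; moreover, its image is exactly $\{\nu\in P_c(X'):s(\nu)\subset X\}$, a preimage of such a $\nu$ being obtained by extending functions from $X$ to $X'$ via the $C$-embedding, in the spirit of the proof of Proposition 2.2. Consequently $T'\circ e$ factors continuously through a map $T\colon Y\to P_c(X)$ with $s(T(y))\subset f^{-1}(y)$ for every $y\in Y$, exhibiting $f$ as a Milyutin map.

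I expect the Milyutin step to be the subtle one, in particular verifying not just that $T'\circ e$ takes values in the image of $P_c(X)$ but that the factorization is continuous. This rests on the closedness of $X$ in the paracompact space $X'$ and on Proposition 2.3(ii); the other verifications---perfectness and $0$-invertibility of $f$, $0$-dimensionality and $p$-paracompactness of $X$---are direct consequences of the construction.
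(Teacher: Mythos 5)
Your proposal is correct and follows essentially the same route as the paper: embed $Y$ as a closed subspace of $M\times\mathbb I^\tau$ ($M$ metrizable), take the perfect $0$-invertible Milyutin map onto this product furnished by Corollary 3.10, restrict it to the preimage $X$ of $Y$, and use the $C$-embedding of the closed set $X$ in the paracompact total space to see that the associated map $T$ lands in $P_c(X)$ over points of $Y$. The extra detail you supply (the graph-embedding argument for the closed embedding, the explicit lifting for $0$-invertibility, and the continuity of the factorization through $P_c(X)$) only fills in steps the paper leaves implicit.
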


\begin{proof}
Since $Y$ is $p$-paracompact, it can be considered as a closed
subset of $M\times\mathbb I^\tau$,where $M$ is metrizable and
$\tau\geq\aleph_0$. There exist perfect Milyutin maps
$\phi:\mathfrak C\to\mathbb I$ and $g\colon M_0\to M$ with
$\mathfrak C$ being the Cantor set \cite{p} and $M_0$ a
$0$-dimensional metrizable space. \cite[Theorem 1.2.1]{co}. Then the
product map $\Phi=g\times\phi^\tau\colon M_0\times\mathfrak C^\tau$
is a perfect $0$-invertible Milyutin map (see Corollary 3.10), and
let $T\colon M\times\mathbb I^\tau\to P_c\big(M_0\times\mathfrak
C^\tau\big)$ be a map associated with $\Phi$. Define
$X=\Phi^{-1}(Y)$ and $f=\Phi|X$. Since $X$ is closed in
$M_0\times\mathfrak C^\tau$, it is a $0$-dimensional
$p$-paracompact. Since $\Phi$ is $0$-invertible (as a product of
$0$-invertible maps, see Theorem 3.6), so is $f$. To show that $f$
is Milyutin, observe that $X$ is $C$-embedded in $M_0\times\mathfrak
C^\tau$. So, $P_c(X)$ is embedded in $P_c\big(M_0\times\mathfrak
C^\tau\big)$ such that $T(y)\in P_c(X)$ for all $y\in Y$. This means
that $T|Y$ is a map associated with $f$. Hence, $f$ is Milyutin.
\end{proof}

Now, we provide a specific class of Milyutin maps. Suppose $B\subset
Z$ and $g\colon B\to D$. We say that $g$ is {\em a $Z$-normal map}
provided for every $h\in C(D)$ the function $h\circ g$ can be
continuously extended to a function on $Z$. A map $f:X\to Y$ is
called {\em 0-soft} \cite{ch2} if for any 0-dimensional space $Z$,
any two subspaces $Z_0\subset Z_1\subset Z$, and any $Z$-normal maps
$g_0:Z_0\to X$ and $g_1\colon Z_1\to Y$ with $f\circ g_0=g_1|Z_0$,
there exists a $Z$-normal map $g\colon Z_1\to X$ such that $f\circ
g=g_1$.

\begin{pro}
Every $0$-soft map is Milyutin.
\end{pro}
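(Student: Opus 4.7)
The plan is a three-step reduction: cover $Y$ by a Milyutin map from a $0$-dimensional space, lift that cover through $f$ using the $0$-softness hypothesis, and transport the measure-valued map associated with the cover forward along the lift.

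\emph{Step 1.} I would first produce a perfect Milyutin map $p\colon Z\to Y$ with $Z$ a $0$-dimensional Tychonoff space, and denote by $T_p\colon Y\to P_c(Z)$ an associated map. Existence of such a cover for an arbitrary Tychonoff $Y$ follows from Milyutin's classical theorem applied to $\beta Y$: take a perfect Milyutin map $\pi\colon K\to\beta Y$ from a $0$-dimensional compactum $K$, set $Z=\pi^{-1}(Y)$, $p=\pi|Z$, and obtain $T_p$ from the restriction of the associated map $T_\pi\colon\beta Y\to P(K)$ to $Y$. The paper's Corollary~3.11 already uses a closely related restriction argument in the $p$-paracompact case.

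\emph{Step 2.} I would then invoke $0$-softness of $f$ with $(Z_0,Z_1)=(\emptyset,Z)$. The empty $g_0$ is vacuously $Z$-normal, and $g_1=p$ is $Z$-normal because its domain already equals the ambient space $Z$ (every function $h\circ p$ is already defined on all of $Z_1=Z$, so no nontrivial extension is required). Applying the $0$-soft property produces a continuous map $g\colon Z\to X$ with $f\circ g=p$.

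\emph{Step 3.} Finally, I would set $T=P_c(g)\circ T_p\colon Y\to P_c(X)$. This is continuous by functoriality of $P_c$, and the support estimate
$$s\bigl(T(y)\bigr)\subset g\bigl(s(T_p(y))\bigr)\subset g\bigl(p^{-1}(y)\bigr)\subset f^{-1}(y),$$
where the last inclusion uses $f\circ g=p$, shows that $T$ is a map associated with $f$. By the equivalence recorded just after formula~(3) in Section~3, this is precisely the statement that $f$ is a Milyutin map.

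The delicate point lies in Step~1: checking that the restriction $T_\pi|Y$ really defines a \emph{continuous} map into $P_c(Z)$, since its values are a priori measures on $K$ rather than on $Z$ and an element of $C(Z)$ need not extend to $K$. One has to identify each such value with a genuine element of $P_c(Z)$ via Tietze extension of test functions from the compact support into $K$, and then verify that continuity is inherited from $T_\pi$. Once this identification is in place, Steps~2 and~3 are essentially formal.
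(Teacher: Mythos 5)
Your Step 3 is fine and matches the transport argument the paper itself uses (e.g.\ in the proof of Theorem 3.6, $(iii)\Rightarrow(i)$), but Steps 1--2 contain a genuine gap. To invoke $0$-softness you take the ambient test space to be $Z=\pi^{-1}(Y)\subset K$, and the definition of a $0$-soft map requires $\dim Z=0$, which in this paper means $\dim\beta Z=0$. Being a subspace of the $0$-dimensional compactum $K$ only gives $Z$ a base of clopen sets (small inductive dimension zero); since $Y$ is an arbitrary Tychonoff space, $Z=\pi^{-1}(Y)$ is neither closed in $K$ nor $C^{*}$-embedded in $K$, so there is no reason for $\beta Z$ to be $0$-dimensional (small inductive dimension zero does not imply $\dim=0$ in general). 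The paper's own restrictions of this kind succeed only because the preimage either sits between a space and its \v{C}ech--Stone compactification (Theorem 3.6, where $\beta Z_{1}=\beta Z$) or is closed in a strongly $0$-dimensional space (Corollary 3.11); neither applies to $\pi^{-1}(Y)\subset K$. If you instead keep $K$ as the ambient space and treat $Z_{1}=\pi^{-1}(Y)$ as a proper subspace, you hit the complementary problem: $g_{1}=\pi|Z_{1}$ must then be $K$-normal, i.e.\ $h\circ g_{1}$ must extend over $K$ for every $h\in C(Y)$, and this fails for unbounded $h$, since such $h$ do not factor through $\beta Y$. This is precisely why the paper replaces $\beta Y$ by $\mathbb R^{C(Y)}$: it takes a perfect Milyutin map $\varphi\colon Z\to\mathbb R^{C(Y)}$ from a $0$-dimensional product of metrizable spaces (Corollary 3.10), uses the $C$-embedding of $Y$ in $\mathbb R^{C(Y)}$ to see that $\varphi|\varphi^{-1}(Y)$ is $Z$-normal, lifts it through $f$ by $0$-softness, and then defines the averaging operator directly by $v(h)=u(e(h))|Y$, where $e(h)\in C(Z)$ extends $h\circ g$.

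A smaller, repairable issue is the one you flag yourself at the end: restricting $T_{\pi}$ to $Y$ naturally yields a continuous map into $P_{c}^{*}(Z)$ (via the identification of $\{\mu\in P(K):s(\mu)\subset Z\}$ with $P_{c}^{*}(Z)$ cited in the proof of Theorem 3.8), not into $P_{c}(Z)$, and $j_{Z}^{-1}$ is not continuous in general (Proposition 2.5(i)); one would then appeal to Proposition 3.4(ii) (perfect weakly Milyutin maps are Milyutin) to upgrade. That step can be patched, but the $0$-dimensionality/$Z$-normality obstruction above cannot be fixed without changing the compactification, so the proof as written does not go through for arbitrary Tychonoff $Y$.
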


\begin{proof}
Let $f\colon X\to Y$ be 0-soft. Consider $Y$ as a $C$-embedded
subset of $\mathbb R^{C(Y)}$ and let $\varphi\colon Z\to\mathbb
R^{C(Y)}$ be a perfect Milyutin map with $\dim Z=0$ (see Corollary
3.10).  Since $Y$ is $C$-embedded in $\mathbb R^{C(Y)}$,
$g_1=\varphi|Z_1:Z_1\to Y$ is a $Z$-normal map, where
$Z_1=\varphi^{-1}(Y)$. Because $f$ is 0-soft, there exists a
$Z$-normal map $g\colon Z_1\to X$ with $f\circ g=g_1$. Now, for
every $h\in C(X)$ choose an extension $e(h)\in C(Z)$ of $h\circ g$
(such $e(h)$ exist since $g$ is $Z$-normal). Define $v\colon C(X)\to
C(Y)$ by $v(h)=u(e(h))|Y$, where $u\colon C(Z)\to C\big(\mathbb
R^{C(Y)}\big)$ is a regular averaging operator for $\varphi$ having
compact supports. The map $v$ is linear because for every $y\in Y$
$u(e(h))(y)$ depends on the restriction $e(h)|\varphi^{-1}(y)$. By
the same reason $v$ has compact supports. Moreover, $v$ is a regular
averaging operator for $f$. Hence, $f$ is Milyutin.
\end{proof}

\section{AE(0)-spaces and regular extension operators with compact supports}

Let $X$ be a subspace of $Y$. A linear operator $u:C(X,E)\to C(Y,E)$
is said to be {\em an extension operator} provided each $u(f)$,
$f\in C(X,E)$ is an extension of $f$. One can show that such an
extension operator $u$ is regular and has compact supports if and
only if there exists a map $T\colon Y\to P_c(X,E)$ such that
$T(x)=\delta_x$ for every $x\in X$. Sometimes a map $T\colon Y\to
P_c(X,E)$ satisfying the last condition will be called {\em a
$P_c$-valued retraction}. The connection between $u$ and $T$ is
given by the formula $T(y)(f)=u(f)(y)$, $f\in C(X,E)$, $y\in Y$.

Pelczynski \cite{p} introduced the class of Dugundji spaces: a
compactum $X$ is a {\em Dugundji space} if for every embedding of
$X$ in another compact space $Y$ there exists an extension regular
operator $u\colon C(X)\to C(Y)$ (note that $u$ has compact supports
because $X$ is compact). Later Haydon \cite{ha} proved that a
compact space $X$ is a Dugundji space if and only if it is an
absolute extensor for $0$-dimensional compact spaces (br., $X\in
AE(0)$). The notion of $X\in AE(0)$ was extended by Chigogidze
\cite{ch2} in the class of all Tychonoff spaces as follows: a {\em
space $X$ is an $AE(0)$} if for every $0$-dimensional space $Z$ and
its subspace $Z_0\subset Z$, every $Z$-normal map $g\colon Z_0\to X$
can be extended to the whole of $Z$.

We show that an analogue of Haydon's result remains true and for the
extended class of $AE(0)$-spaces.

\begin{thm}\label{main}
For any space $X$ the following conditions are equivalent:
\begin{itemize}
\item[(i)] $X$ is an $AE(0)$-space;
\item[(ii)] For every $C$-embedding of $X$ in a space $Y$ there
exists a regular extension operator $u\colon C(X)\to C(Y)$ with
compact supports;
\item[(iii)] For every $C$-embedding of $X$ in a space $Y$ there
exists a regular extension operator $u\colon C^*(X)\to C^*(Y)$ with
compact supports.
\end{itemize}
\end{thm}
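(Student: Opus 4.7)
I would prove the equivalences via the cycle (i)$\Rightarrow$(ii)$\Rightarrow$(iii)$\Rightarrow$(i), where the last step is the substantive one. The implication (ii)$\Rightarrow$(iii) is just restriction: for the given operator $u\colon C(X)\to C(Y)$, regularity yields $|u(f)(y)|\le\sup_X|f|$ for bounded $f$, so $u|_{C^*(X)}$ maps $C^*(X)$ into $C^*(Y)$ and remains a regular extension operator. If $T(y)\colon C(X)\to\mathbb R$ is the functional of $u$ at $y$, then $s^*\!\bigl(T(y)|_{C^*(X)}\bigr)\subset s(T(y))\subset X$; since $T(y)$ is non-negative and its $C$-support already lies in $X$, Lemma~2.1(iii) forces $s(T(y))=s^*\!\bigl(T(y)|_{C^*(X)}\bigr)$, so the compact-support condition carries over.

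For (i)$\Rightarrow$(ii) I would lift via a Milyutin map. Embed $Y$ (hence $X$) as a $C$-embedded subspace of $E=\mathbb R^{C(Y)}$. Corollary~3.10 supplies a $0$-dimensional space $Z$ and a perfect Milyutin map $\varphi\colon Z\to E$, with an associated continuous $T_\varphi\colon E\to P_c(Z)$ such that $s(T_\varphi(e))\subset\varphi^{-1}(e)$. Set $Z_0=\varphi^{-1}(X)$; the restriction $\varphi_0=\varphi|_{Z_0}\colon Z_0\to X$ is $Z$-normal, because every $h\in C(X)$ extends through the chain of $C$-embeddings $X\subset Y\subset E$ to some $\widetilde h\in C(E)$, and $\widetilde h\circ\varphi$ is a continuous extension of $h\circ\varphi_0$ to all of $Z$. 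By (i) the map $\varphi_0$ extends to a continuous $r\colon Z\to X$, and I set $T=P_c(r)\circ(T_\varphi|_Y)\colon Y\to P_c(X)$. For $y\in Y$ the support $s(T(y))\subset r(\varphi^{-1}(y))\subset X$ is compact (since $\varphi$ is perfect); for $x\in X$, $\varphi^{-1}(x)\subset Z_0$ where $r=\varphi$, so $s(T(x))=\{x\}$, forcing $T(x)=\delta_x$ as $T(x)$ is a probability measure. Hence $T$ is a $P_c$-valued retraction, producing the required regular extension operator with compact supports.

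For (iii)$\Rightarrow$(i), given a $0$-dimensional $Z$, a subspace $Z_0\subset Z$ and a $Z$-normal $g_0\colon Z_0\to X$, I embed $X\subset E:=\mathbb R^{C(X)}$ as a $C$-embedded subspace and apply (iii) to obtain a continuous retraction $T^*\colon E\to P_c^*(X)$ with $T^*(x)=\delta_x^*$. Using $Z$-normality I extend each coordinate $h\circ g_0$ ($h\in C(X)$) continuously to $Z$ and assemble these into $\widetilde g\colon Z\to E$ with $\widetilde g|_{Z_0}=e_X\circ g_0$. The composition $S=T^*\circ\widetilde g\colon Z\to P_c^*(X)$ satisfies $S(z)=\delta_{g_0(z)}^*$ on $Z_0$, and by Lemma~3.3 the support map $\Phi(z):=s^*(S(z))\subset X$ is lower semi-continuous with compact values, coinciding with $\{g_0(z)\}$ on $Z_0$. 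The required extension of $g_0$ is a continuous selection $r\colon Z\to X$ of $\Phi$. To produce one, I view $P_c^*(X)\subset P(\beta X)$, extend $S$ to $\overline S\colon\beta Z\to P(\beta X)$ (using compactness of $P(\beta X)$), form the lsc compact-valued $\overline\Phi(z):=s(\overline S(z))\subset\beta X$, and invoke a Michael-type $0$-dimensional selection theorem on the $0$-dimensional paracompact $\beta Z$ with values in the compact Hausdorff $\beta X$. The resulting selection, restricted to $Z$, lands in $X$ because each $S(z)\in P_c^*(X)$ has support in $X$, and it agrees with $g_0$ on $Z_0$.

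The main obstacle is precisely this last selection step: Michael's classical $0$-dimensional selection theorem assumes a complete metric target, whereas $\beta X$ is only compact Hausdorff. One therefore either invokes a non-metric version (for lsc compact-valued carriers into compact Hausdorff targets) or routes around the issue by first showing $\beta X$ is Dugundji---which (iii) supplies by lifting the operator $v\colon C^*(X)\to C^*(E)$ to a regular extension operator $C(\beta X)\to C(\beta E)$ via the canonical identifications $C^*(\,\cdot\,)=C(\beta\,\cdot\,)$---and then citing \cite{ch2} for the passage from $\beta X$ Dugundji back to $X\in AE(0)$.
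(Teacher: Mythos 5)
Your implications $(i)\Rightarrow(ii)$ and $(ii)\Rightarrow(iii)$ are correct and essentially identical to the paper's: the paper also reduces $(i)\Rightarrow(ii)$ to $Y=\mathbb R^A$, takes a $0$-invertible perfect Milyutin map $f\colon Z\to\mathbb R^A$ from Corollary 3.10, extends the $Z$-normal map $f|f^{-1}(X)$ to $q\colon Z\to X$ using $X\in AE(0)$, and sets $T=P_c(q)\circ g$; and $(ii)\Rightarrow(iii)$ is just restriction. The problem is $(iii)\Rightarrow(i)$, which is the entire substance of the theorem, and there your argument has a genuine gap that you yourself half-identify. Michael's $0$-dimensional selection theorem requires a \emph{completely metrizable} range; there is no version for lsc compact-valued carriers into an arbitrary compact Hausdorff space such as $\beta X$, and the paper is careful never to use one (in Theorem 3.6 the selection is taken into the metrizable factor $M$ of the kernel, not into $X$ or $\beta X$). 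So the selection $r\colon\beta Z\to\beta X$ you need simply is not available, and the first route collapses.

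The fallback route is also unsound on two counts. First, a regular extension operator for the single embedding $\beta X\subset\beta\bigl(\mathbb R^{C(X)}\bigr)$ does not yield that $\beta X$ is Dugundji: the standard transfer to an arbitrary compact embedding requires retracting a cube onto the ambient space, and $\beta\bigl(\mathbb R^{C(X)}\bigr)$ is not an absolute extensor. Second, and more fundamentally, ``$\beta X$ Dugundji'' does not imply $X\in AE(0)$; the hypothesis that the supports are compact subsets \emph{of $X$} is exactly the information discarded in passing to $\beta X$, and no ``passage back'' result of the kind you cite exists in \cite{ch2}. The paper's actual proof of $(iii)\Rightarrow(i)$ is a long spectral argument: it introduces $T$-admissible subsets of the index set $A$ (Claims 3--5), represents $X$ as the limit of a continuous well-ordered inverse system $S=\{X_\lambda,p^\eta_\lambda\}$ of closed subspaces of countable products (so each $X_\lambda$ is Polish, hence $AE(0)$), shows via a cartesian-square/fibered-product argument that each short projection $p^{\lambda+1}_\lambda$ is $0$-soft because it is pulled back from a functionally open map between Polish spaces, and then invokes Chigogidze's spectral characterization of $AE(0)$-spaces. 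None of this machinery appears in your proposal, and without it (or an equally substantive substitute) the implication $(iii)\Rightarrow(i)$ remains unproved.
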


\begin{proof}
$(i)\Rightarrow (ii)$ Suppose $X$ is $C$-embedded in $Y$ and take a
set $A$ such that $Y$ is $C$-embedded in $\mathbb R^A$. It suffices
to show there exists a regular extension operator $u\colon C(X)\to
C(\mathbb R^A)$ with compact supports, or equivalently, we can find
a map $T\colon\mathbb R^A\to P_c(X)$ with $T(x)=\delta_x$ for all
$x\in X$. By Corollary 3.10, there exists a $0$-dimensional space
$Z$ and a Milyutin map $f\colon Z\to\mathbb R^A$. This means that
the map $g\colon\mathbb R^A\to P_c(Z)$ associated with $f$ is an
embedding.
 Since $X$ is $C$-embedded in $\mathbb R^A$, the restriction
$f|f^{-1}(X)$ is a $Z$-normal map. So, there exists a map $q\colon
Z\to X$ extending $f|f^{-1}(X)$ (recall that $X\in AE(0)$). Then
$T=P_c(q)\circ g\colon\mathbb R^A\to P_c(X)$ has the required
property that $T(x)=\delta_x$ for all $x\in X$.

$(ii)\Rightarrow (iii)$ Let $X$ be $C$-embedded in $Y$ and $u\colon
C(X)\to C(Y)$ a regular extension operator with compact supports.
Then $u(f)\in C^*(Y)$ for all $f\in C^*(X)$ because $u$ is regular.
Hence, $u|C^*(X)\colon C^*(X)\to C^*(Y)$ is a regular extension
operator with compact supports.

$(iii)\Rightarrow (i)$ Suppose $X$ is $C$-embedded in $\mathbb R^A$
for some $A$ and $u\colon C^*(X)\to C^*(\mathbb R^A)$ is a regular
extension operator with compact supports. So, there exists a map
$T\colon\mathbb R^A\to P_c(X)$ with $T(x)=\delta_x$, $x\in X$.
Assume that $A$ is the set of all ordinals
$\{\lambda:\lambda<\omega(\tau)\}$, where $\omega(\tau)$ is the
first ordinal of cardinality $\tau$.

For any sets $B\subset D\subset A$ we use the following notations:
$\pi_B\colon\mathbb R^A\to\mathbb R^B$ and $\pi^D_B\colon\mathbb
R^D\to\mathbb R^B$ are the natural projections, $X(B)=\pi_B(X)$,
$p_B=\pi_B|X$ and $p^D_B=\pi^D_B|X(D)$. A set $B\subset A$ is called
{\em $T$-admissible} if for any $x\in X$ and $y\in\mathbb R^A$ the
equality $\pi_B(x)=\pi_B(y)$ implies
$P^*_c(p_B)(\delta_x)=P^*_c(p_B)(T(y))$. Let us note that if $B$ is
$T$-admissible, then there exists a map

\medskip\noindent
(4)\hspace{0.3cm} $T_B\colon\mathbb R^B\to P_c^*(X(B))$ such that
$T_B(z)=\delta_z$ for all $z\in X(B)$.

\noindent Indeed, take an embedding $i\colon\mathbb R^B\to\mathbb
R^A$ such that $\pi_B\circ i$ is the identity on $\mathbb R^B$, and
define $T_B=P_c^*(p_B)\circ T\circ i$.

\medskip\noindent
{\em Claim $3$. For every countable set $B\subset A$ there exists a
countable $T$-admissible set $D\subset A$ containing $B$}

We construct by induction an increasing sequence $\{D(n)\}_{n\geq
1}$ of countable subsets of $A$ such that $D\subset D(1)$ and for
all $n\geq 1$, $x\in X$ and $y\in\mathbb R^A$ we have

\medskip\noindent
(5)\hspace{0.3cm} $P_c^*(p_{D(n)})(\delta_x)=P_c^*(p_{D(n)})(T(y))$
provided $\pi_{D(n+1)}(x)=\pi_{D(n+1)}(y)$.

\medskip\noindent
Suppose we have already constructed $D(1),..,D(n)$. Since $D(n)$ is
countable, the topological weight of $X(D(n))$ is $\aleph_0$. So is
the weight of $P_c^*(X(D(n)))$ \cite{ch1}. Then the map
$P_c^*(p_{D(n)})\circ T\colon\mathbb R^A\to P_c^*(X(D(n)))$ depends
on countable many coordinates (see, for example \cite{pp}). This
means that there exists a countable set $D(n+1)$ satisfying $(5)$.
We can assume that $D(n+1)$ contains $D(n)$, which completes the
induction. Obviously, the set $D=\bigcup_{n\geq 1}D(n)$ is
countable. Let us show it is $T$-admissible. Suppose
$\pi_D(x)=\pi_D(y)$ for some $x\in X$ and $y\in\mathbb R^A$. Hence,
for every $n\geq 1$ we have $\pi_{D(n+1)}(x)=\pi_{D(n+1)}(y)$ and,
by (5), $P_c^*(p_{D(n)})(\delta_x)=P_c^*(p_{D(n)})(T(y))$. This
means that the support of each measure $P_c^*(p_{D(n)})(T(y))$ is
the point $p_{D(n)}(x)$. The last relation implies that the support
of $P_c^*(p_D)(T(y))$ is the point $p_D(x)$. Therefore,
$P_c^*(p_D)(T(y))=P_c^*(p_D)(\delta_x)$ and $D$ is $T$-admissible.

\medskip\noindent
{\em Claim $4$. Any union of $T$-admissible sets is $T$-admissible}.

Suppose $B$ is the union of $T$-admissible sets $B(s)$, $s\in S$,
and $\pi_B(x)=\pi_B(y)$ with $x\in X$ and $y\in\mathbb R^A$. Then
$\pi_{B(s)}(x)=\pi_{B(s)}(y)$ for every $s\in S$. Hence,
$P_c^*(p_{B(s)})(T(y))=P_c^*(p_{B(s)})(\delta_x)$, $s\in S$. So, the
support of each $P_c^*(p_{B(s)})(T(y))$ is the point $p_{B(s)})(x)$.
Consequently, the support of $P_c^*(p_B)(T(y))$ is the point
$p_B(x)$ because
$p_B(x)=\bigcap\{\big(p^B_{B(s)}\big)^{-1}(p_{B(s)}(x)):s\in S\}$.
This means that $B$ is $T$-admissible.

\medskip\noindent
{\em Claim $5$. Let $B\subset A$ be $T$-admissible. Then we have:
\begin{itemize}
\item[(a)] $X(B)$ is a closed subset of $\mathbb R^B$;
\item[$(b)$] $p_B(V)$ is functionally open in $X(B)$ for any functionally
open subset $V$ of $X$.
\end{itemize}}

Since $B$ is $T$-admissible, according to (4) there exists a map
$T_B\colon\mathbb R^B\to P_c^*(X(B))$ such that $T_B(z)=\delta_z$
for all $z\in X(B)$. To prove condition (a), suppose
$\{z_\alpha:\alpha\in\Lambda\}$ is a net in $X(B)$ converging to
some $z\in\mathbb R^B$. Then $\{T_B(z_\alpha)\}$ converges to
$T_B(z)$. But $T_B(z_\alpha)=\delta_{z_\alpha}\in i^*_{X(B)}(X(B))$
for every $\alpha$ and, since $i^*_{X(B)}(X(B))$ is a closed subset
of $P_c^*(X(B))$ (see Proposition 2.3(i)), $T_B(z)\in
i^*_{X(B)}(X(B))$. Hence, $T_B(z)=\delta_y$ for some $y\in X(B)$.
Using that $i^*_{X(B)}$ embeds $X(B)$ in $P_c^*(X(B))$, we obtain
that $\{z_\alpha\}$ converges to $y$, so $y=z\in X(B)$.

To prove $(b)$, let $V$ be a functionally open subset of $X$ and
$g\colon X\to [0,1]$ a continuous function with $V=g^{-1}((0,1])$.
Then $u(g)\in C^*(\mathbb R^A)$ with $0\leq u(g)(y)\leq 1$ for all
$y\in\mathbb R^A$ and let $W=u(g)^{-1}((0,1])$. Since $\pi_B(W)$ is
functionally open in $\mathbb R^B$ (see, for example \cite{vv2}),
$\pi_B(W)\cap X(B)$ is functionally open in $X(B)$. So, it suffices
to show that $p_B(V)=\pi_B(W)\cap X(B)$. Because $u(g)$ extends $g$,
we have $V\subset W$. So, $p_B(V)\subset\pi_B(W)\cap X(B)$. To prove
the other inclusion, let $z\in\pi_B(W)\cap X(B)$. Choose $x\in X$
and $y\in W$ with $\pi_B(x)=\pi_B(y)$. Then
$P_c^*(p_B)(T(y))=P_c^*(p_B)(\delta_x)=\delta_z$ (recall that $B$ is
$T$-admissible). Hence, $s^*(T(y))\subset p_B^{-1}(z)$. Since $y\in
W$, $T(y)(g)=u(g)(y)\in (0,1]$. This implies that $s^*(T(y))\cap
V\neq\varnothing$ (otherwise $T(y)(g)=0$ because $g(X-V)=0$, see
Proposition 2.1(ii)). Therefore, $z\in p_B(V)$, i.e. $\pi_B(W)\cap
X(B)\subset p_B(V)$. The proof of Claim 5 is completed.

\medskip
Let us continue the proof of $(iii)\Rightarrow (i)$. Since $A$ is
the set of all ordinals $\lambda<\omega(\tau)$, according to Claim
3, for every $\lambda$ there exists a countable $T$-admissible set
$B(\lambda)\subset A$ containing $\lambda$. Let
$A(\lambda)=\cup\{B(\eta):\eta<\lambda\}$ if $\lambda$ is a limit
ordinal, and $A(\lambda)=\cup\{B(\eta):\eta\leq\lambda\}$ otherwise.
By Claim 4, every $A(\lambda)$ is $T$-admissible. We are going to
use the following simplified notations:

\begin{center}
$X_\lambda=X(A(\lambda))$, $p_\lambda=p_{A(\lambda)}\colon
X\to X_\lambda$ and $p^\eta_\lambda\colon X\eta\to X_\lambda$
provided $\lambda<\eta$.
\end{center}

\noindent Since $A$ is the union of all $A(\lambda)$ and each
$X_\lambda$ is closed in $\mathbb R^{A(\lambda)}$ (see Claim 5(a)),
we obtain a continuous inverse system $S=\{X_\lambda,
p^\eta_\lambda, \lambda<\eta<\omega(\tau)\}$ whose limit space is
$X$. Recall that $S$ is continuous if for every limit ordinal
$\gamma$ the space $X_\gamma$ is the limit of the inverse system
$\{X_\lambda, p^\eta_\lambda, \lambda<\eta<\gamma\}$. Because of the
continuity of $S$, $X\in AE(0)$ provided $X_1\in AE(0)$ and each
short projection $p^{\lambda+1}_\lambda$ is 0-soft. The space $X_1$
being a closed subset of $\mathbb R^{A(1)}$ is a Polish space, so an
$AE(0)$ \cite{ch2}. Hence, it remains to show that all
$p^{\lambda+1}_\lambda$ are 0-soft.

\medskip
We fix $\lambda<\omega(\tau)$ and let
$E(\lambda)=A(\lambda)\cap\big(B(\lambda)\cup B(\lambda+1)\big)$.
Since $E(\lambda)$ is countable, there exists a sequence
$\{\beta_n\}\subset A(\lambda)$ such that $\beta_n\leq\lambda$ for
each $n$ and $E(\lambda)\subset C(\lambda)\subset A(\lambda)$, where
$C(\lambda)=\cup\{B(\beta_n):n\geq 1\}$. By Claim 4, the sets
$C(\lambda)$ and $D(\lambda)=B(\lambda)\cup B(\lambda+1)\cup
C(\lambda)$ are countable and $T$-admissible. Consider the following
diagram:
$$
\begin{CD}
X_{\lambda+1}@>{p^{\lambda+1}_\lambda}>>X_\lambda\cr
@V{p^{A(\lambda+1)}_{D(\lambda)}}VV
@VV{p^{A(\lambda)}_{C(\lambda)}}V\cr
X(D(\lambda))@>{p^{D(\lambda)}_{C(\lambda)}}>>X(C(\lambda))\cr
\end{CD}
$$

We are going to prove first that the diagram is a cartesian square.
This means that the map $g\colon X_{\lambda+1}\to Z$,
$g(x)=\big(p^{A(\lambda+1)}_{D(\lambda)}(x),p^{\lambda+1}_\lambda(x)\big)$,
is a homeomorphism. Here $Z=\{(x_1,x_2)\in X(D(\lambda))\times
X_\lambda:p^{D(\lambda)}_{C(\lambda)}(x_1)=p^{A(\lambda)}_{C(\lambda)}(x_2)\}$
is the fibered product of $X(D(\lambda))$ and $X_\lambda$ with
respect to the maps $p^{D(\lambda)}_{C(\lambda)}$ and
$p^{A(\lambda)}_{C(\lambda)}$.
%
%
%
Let $z=\big(x(1),x(2)\big)\in Z$. Since
$\big(D(\lambda)-C(\lambda)\big)\cap\big(A(\lambda)-C(\lambda)\big)=\varnothing$
and
$A(\lambda+1)=\big(D(\lambda)-C(\lambda)\big)\cup\big(A(\lambda)-C(\lambda)\big)\cup
C(\lambda)$, there exists exactly one point $x\in\mathbb
R^{A(\lambda+1)}$ such that
$\pi^{A(\lambda+1)}_{D(\lambda)}(x)=x(1)$ and
$\pi^{A(\lambda+1)}_{A(\lambda)}(x)=x(2)$. Choose $y\in\mathbb R^A$
with $\pi_{A(\lambda+1)}(y)=x$. Since $D(\lambda)$ and $A(\lambda)$
are $T$-admissible, $P_c^*(p_{D(\lambda)})(T(y))=\delta_{x(1)}$ and
$P_c^*(p_{A(\lambda)})(T(y))=\delta_{x(2)}$. Consequently,
$p^{A(\lambda+1)}_{D(\lambda)}(H)=x(1)$ and
$p^{A(\lambda+1)}_{A(\lambda)}(H)=x(2)$, where $H$ is the support of
the measure $P_c^*(p_{A(\lambda+1)})(T(y))$. Hence, $H=\{x\}$ is the
unique point of $X_{\lambda+1}$ with $g(x)=z$. Thus, $g$ is a
surjective and one-to-one map between $X_{\lambda+1}$ and $Z$. To
prove $g$ is a homeomorphism, it remains to show that $g^{-1}$ is
continuous. The above arguments yield that $x=g^{-1}(z)$ depends
continuously from $z\in Z$. Indeed, since $D(\lambda)\cap
A(\lambda)=C(\lambda)$, we have
\begin{center}
$x(1)=(a,b)\in\mathbb R^{D(\lambda)-C(\lambda)}\times\mathbb
R^{C(\lambda)}$ and $x(2)=(b,c)\in\mathbb
R^{C(\lambda)}\times\mathbb R^{A(\lambda)-C(\lambda)}$,
\end{center}
\noindent where $z=(x(1),x(2))\in Z$. Hence, $g^{-1}(z)=(a,b,c)$ is
a continuous function of $z$.

Since $D(\lambda)$ and $C(\lambda)$ are countable and $T$-admissible
sets, both $X(D(\lambda))$ and $X(C(\lambda))$ are Polish spaces and
$p^{D(\lambda)}_{C(\lambda)}$ is functionally open (see Claim 5(b)).
Hence, $p^{D(\lambda)}_{C(\lambda)}$ is $0$-soft \cite{ch2}. This
yields that $p^{\lambda+1}_\lambda$ is also $0$-soft because the
above diagram is a cartesian square.
\end{proof}

Next proposition provides a characterization of $AE(0)$-spaces in
terms of extension of vector-valued functions. This result was
inspired by \cite{b2}.

\begin{thm}\label{vector-valued}
A space $X\in AE(0)$ if and only if for any complete locally convex
space $E$ and any $C$-embedding of $X$ in a space $Y$ there exists a
regular extension operator $\colon C^*(X,E)\to C^*(Y,E)$ with
compact supports.
\end{thm}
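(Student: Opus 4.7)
The plan is to reduce the sufficiency of the condition to Theorem~\ref{main} by specializing $E=\mathbb R$, and to derive the necessity by combining the $P_c^*$-valued retraction supplied by Theorem~\ref{main}(iii) with the barycenter extension technique already used in the proof of Proposition~3.1. For the ``if'' direction I would simply apply the hypothesis with $E=\mathbb R$: the resulting regular extension operator $C^*(X)\to C^*(Y)$ with compact supports, for every $C$-embedding $X\subset Y$, is precisely condition $(iii)$ of Theorem~\ref{main}, whence $X\in AE(0)$.

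For the ``only if'' direction, I would start from $X\in AE(0)$ $C$-embedded in some $Y$ and a complete locally convex space $E$. Theorem~\ref{main}(iii) furnishes a continuous map $T\colon Y\to P_c^*(X)$ with $T(x)=\delta^*_x$ for every $x\in X$. For each $g\in C^*(X,E)$ the set $B(g)=\overline{\mathrm{conv}\,g(X)}$ is a closed bounded convex subset of the complete space $E$, so by \cite[Theorem~3.4 and Proposition~3.10]{b1} there is a continuous barycenter map $b\colon P_c^*(B(g))\to B(g)$. Setting $e(g)=b\circ P_c^*(g)\colon P_c^*(X)\to B(g)$ produces a continuous extension of $g$, and the integral representation $e(g)(\mu)=\int_X g\,d\mu$ makes $g\mapsto e(g)(\mu)$ linear in $g$ for each fixed $\mu$. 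The desired operator is then $u\colon C^*(X,E)\to C^*(Y,E)$ defined by $u(g)=e(g)\circ T$.

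Verification would follow the pattern of the proof of Proposition~3.1: linearity of $u$ is inherited from the integral representation; the extension property $u(g)|X=g$ is immediate from $T(x)=\delta^*_x$; regularity and the inclusion $u(g)\in C^*(Y,E)$ both follow from $u(g)(Y)\subset B(g)$; and the compact supports property is handled exactly as there, by viewing $T(y)$ as an element of $P(K(y))$ with $K(y)=s^*(T(y))\subset X$ compact, approximating $T(y)$ by probability measures with finite supports in $K(y)$, and concluding that $u(g)(y)$ depends only on the restriction $g|K(y)$, so that $s\big(\Lambda(y)\big)\subset K(y)\subset X$ where $\Lambda(y)(g)=u(g)(y)$. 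The main obstacle is really psychological: once one recognizes that Theorem~\ref{main}(iii) already supplies the required $P_c^*$-valued retraction of $Y$ onto $X$, the barycenter machinery developed for Proposition~3.1 promotes it to the vector-valued case without any further difficulty.
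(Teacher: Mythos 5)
Your proposal is correct and follows essentially the same route as the paper: the ``if'' direction specializes to $E=\mathbb R$ and invokes Theorem~\ref{main}(iii), while the ``only if'' direction takes the $P_c^*$-valued retraction from Theorem~\ref{main}(iii), extends each $f\in C^*(X,E)$ to $P_c^*(X)$ via the Banakh barycenter map applied to $\overline{conv~f(X)}$, and verifies regularity and compact supports exactly as in the proof of Proposition~3.1.
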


\begin{proof}
Suppose $X\in AE(0)$ and $X$ is $C$-embedded in a space $Y$. Then by
Theorem~\ref{main}(iii), there exists a regular extension operator
$v\colon C^*(X)\to C^*(Y)$ with compact supports. This is equivalent
to the existence of a $P_c^*$-valued retraction $T\colon Y\to
P_c^*(X)$.  We can extend each $f\in C^*(X,E)$ to a continuous
bounded map $e(f):P_c^*(X)\to E$. Indeed, let
$B(f)=\overline{conv~f(X)}$ and consider the map $P_c^*(f)\colon
P_c^*(X)\to P_c^*(B(f))$. Obviously, $B(f)$ is a bounded convex
closed subset $E$, so it is complete. Then, by \cite[Theorem 3.4 and
Proposition 3.10]{b1}, there exists a continuous map $b\colon
P_c^*(B(f))\to B(f)$ assigning to each measure $\nu\in P_c^*(B(f))$
its barycenter $b(\nu)$. The composition $e(f)=b\circ P_c^*(f)\colon
P_c^*(X)\to B(f)$ is a bounded continuous extension of $f$. We also
have

\medskip\noindent
(6)\hspace{0.3cm} $e(f)(\mu)=\int_Xfd\mu$ for every $\mu\in
P_c^*(X)$.

\medskip
\noindent Finally, we define $u\colon C^*(X,E)\to C^*(Y,E)$ by
$u(f)=e(f)\circ T$, $f\in C^*(X,E)$. The linearity of $u$ follows
from $(6)$. Moreover, for every $y\in Y$ the linear map
$\Lambda(y)\colon C^*(X,E)\to E$, $\Lambda(y)(f)=u(f)(y)$, is
regular because $\Lambda(y)(f)\in\overline{conv~f(X)}$. Using the
arguments from the proof of Proposition 3.1 (the final part), we can
show that each $\Lambda(y)$, $y\in Y$, has a compact support which
is contained in  $K(y)=s^*(T(y))\subset X$. Therefore, $u$ is a
regular extension operator with compact supports.

\medskip
The other implication follows from Theorem~\ref{main}. Indeed, since
$\mathbb R$ is complete, there exists a regular extension operator
$u\colon C^*(X)\to C^*(Y)$ provided $X$ is $C$-embedded in $Y$.
Hence, by Theorem~\ref{main}(iii), $X\in AE(0)$.
\end{proof}

Recall that a space $X$ is an absolute retract \cite{ch2} if for
every $C$-embedding of $X$ in a space $Y$ there exists a retraction
from $Y$ onto $X$.

\begin{cor}\label{AR}
Let $X$ be a convex bounded and complete subset of a locally convex
topological space. Then $X$ is an absolute retract provided $X\in
AE(0)$.
\end{cor}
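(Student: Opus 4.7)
The plan is to apply Theorem~\ref{vector-valued} to the identity inclusion $\iota\colon X\hookrightarrow E$, where $E$ is the ambient locally convex space. Given any $C$-embedding $X\subset Y$, I would first observe that $\iota$ belongs to $C^*(X,E)$: continuity is obvious, and the image $\iota(X)=X$ is bounded in $E$ by hypothesis. After replacing $E$ by its completion (which does not affect $X$, since $X$ is already complete), we may assume $E$ is a complete locally convex space. Then Theorem~\ref{vector-valued} supplies a regular extension operator $u\colon C^*(X,E)\to C^*(Y,E)$ with compact supports.

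Set $r=u(\iota)\colon Y\to E$. Two properties of $u$ do all the work. First, since $u$ is an \emph{extension} operator, $r(x)=\iota(x)=x$ for every $x\in X$, so $r$ fixes $X$ pointwise. Second, since $u$ is \emph{regular},
\[
r(y)=u(\iota)(y)\in\overline{\mathrm{conv}\,\iota(X)}=\overline{\mathrm{conv}\,X}
\qquad\text{for every }y\in Y.
\]
Here is the step where the hypotheses on $X$ enter decisively: $X$ is convex by assumption, and completeness (in the uniformity inherited from $E$) forces $X$ to be closed in $E$, so $\overline{\mathrm{conv}\,X}=X$. Consequently $r$ takes values in $X$, and is therefore a retraction $Y\to X$. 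Since the $C$-embedding of $X$ into $Y$ was arbitrary, $X$ is an absolute retract.

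There is no real obstacle to surmount; the argument is essentially a clean application of Theorem~\ref{vector-valued} to the distinguished bounded map $\iota$. The only subtlety is the mild completeness adjustment on $E$ so that the hypotheses of Theorem~\ref{vector-valued} are literally met, together with the observation that completeness of $X$ (not just of $E$) is precisely what guarantees that the regularity of $u$ confines $r$ to $X$ rather than merely to $\overline{\mathrm{conv}\,X}\subset E$.
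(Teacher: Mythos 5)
Your proof is correct and, once unwound, coincides with the paper's argument: the paper takes the $P_c$-valued retraction $T\colon Y\to P_c(X)$ supplied by Theorem~\ref{main} and composes it with Banakh's barycenter map $b\colon P_c(X)\to X$, whereas you apply Theorem~\ref{vector-valued} to the inclusion $\iota\colon X\to E$ --- but since the operator in Theorem~\ref{vector-valued} is built as $u(f)=e(f)\circ T$ with $e(f)(\mu)=\int_X f\,d\mu$, your retraction $u(\iota)$ is exactly $y\mapsto b(T(y))$. Your observation that regularity of $u$ together with convexity and completeness (hence closedness in the completed ambient space) of $X$ confines $u(\iota)$ to $X$ is a clean substitute for invoking the barycenter results directly, and the preliminary completion of $E$ is harmless.
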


\begin{proof}
Suppose $X$ is $C$-embedded in a space $Y$. According to
\cite[Theorem 3.4 and Proposition 3.10]{b1}, the barycenter of each
$\mu\in P_c(X)$ belongs to $X$ and the map $b\colon P_c(X)\to X$ is
continuous. Since $X\in AE(0)$, by Theorem~\ref{main}, there exists
a $P_c$-valued retraction $T\colon Y\to P_c(X)$. Then $r=b\circ
T\colon Y\to X$ is a retraction.
\end{proof}

\begin{lem}
Let $X\subset Y$ and $u\colon C(X)\to C(Y)$ be a regular extension
operator with compact supports. Suppose every closed bounded subset
of $X$ is compact. Then there exists a map $T_c\colon P_c(Y)\to
P_c(X)$ $($resp., $T_c^*\colon P_c^*(Y)\to P_c^*(X)$$)$ such that
$P_c(i)\circ T_c$ $($resp., $P_c^*(i)\circ T_c^*$$)$ is a
retraction, where $i\colon X\to Y$ is the embedding of $X$ into $Y$.
\end{lem}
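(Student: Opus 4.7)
The plan is to invert the correspondence between $u$ and a $P_c$-valued retraction $T\colon Y\to P_c(X)$ satisfying $T(x)=\delta_x$ for $x\in X$ and $u(f)(y)=T(y)(f)$, and then use $u$ to push measures on $Y$ back to measures on $X$. Concretely, for $\mu\in P_c(Y)$ I define $T_c(\mu)$ on $C(X)$ by
\[
T_c(\mu)(f)=\mu(u(f)),\quad f\in C(X).
\]
Linearity is immediate. Since $u$ is regular, $u(f)(y)\in\overline{conv~f(X)}$, so $u$ sends non-negative functions to non-negative functions, and $u(1)=1$ because $u$ extends the constant $1$. Hence $T_c(\mu)$ is a non-negative linear functional on $C(X)$ with $T_c(\mu)(1)=1$, i.e.\ a regular linear functional.

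The main technical step is to verify that $T_c(\mu)$ has a compact support contained in $X$. Let $K=s(\mu)$, compact in $Y$; then $T(K)$ is a compact subset of $P_c(X)$. By \cite[Proposition 3.1]{vv}, the set $H=\overline{\bigcup\{s(T(y)):y\in K\}}$ is bounded and closed in $X$, and the standing hypothesis on $X$ then forces $H$ to be compact. For any $f\in C(X)$ with $f(H)=0$ and any $y\in K$, the restriction $f|s(T(y))$ vanishes, and since $T(y)$ is regular its restriction to $C^*(X)$ is norm-continuous; hence Lemma 2.1(ii) yields $u(f)(y)=T(y)(f)=0$, so $u(f)|K=0$. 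A second application of Lemma 2.1(ii) — this time to $\mu$, which is likewise norm-continuous on $C^*(Y)$ — gives $T_c(\mu)(f)=\mu(u(f))=0$. Because $H$ is compact in $X$, hence closed in $\beta X$, any $x\in\beta X\setminus H$ admits a $\beta X$-neighborhood $U$ disjoint from $H$, and every $f\in C(X)$ with $f(X-U)=0$ then satisfies $f(H)=0$ and thus $T_c(\mu)(f)=0$. Consequently $s(T_c(\mu))\subset H\subset X$ and $T_c(\mu)\in P_c(X)$.

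Continuity of $T_c\colon P_c(Y)\to P_c(X)$ is automatic from the pointwise topologies: $\mu_\alpha\to\mu$ in $P_c(Y)$ forces $T_c(\mu_\alpha)(f)=\mu_\alpha(u(f))\to\mu(u(f))=T_c(\mu)(f)$ for each $f\in C(X)$. For the retraction property, given $\nu\in P_c(X)$ set $\mu=P_c(i)(\nu)$; since $u(f)\circ i=f$ for every $f\in C(X)$,
\[
T_c(\mu)(f)=\mu(u(f))=\nu(u(f)\circ i)=\nu(f),
\]
so $T_c\circ P_c(i)=\mathrm{id}_{P_c(X)}$. This immediately makes $P_c(i)\circ T_c$ an idempotent endomorphism of $P_c(Y)$ fixing $P_c(i)(P_c(X))$ pointwise, i.e.\ a retraction onto $P_c(i)(P_c(X))$. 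The $P_c^*$ version follows verbatim, replacing $C(\cdot)$ by $C^*(\cdot)$ and $s$ by $s^*$, and using the restriction $u|C^*(X)\colon C^*(X)\to C^*(Y)$, which is still a regular extension operator with compact supports by regularity of $u$ (together with Lemma 2.1(iii) to identify $s(T(y))$ with $s^*(T(y))$).

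The delicate step is the compact-support estimate in the second paragraph: it requires chaining Lemma 2.1(ii) twice through potentially unbounded functions — once applied to each $T(y)$ against $f\in C(X)$, and once applied to $\mu$ against the possibly unbounded extension $u(f)\in C(Y)$ — and both applications depend crucially on the regularity of the functionals involved, which is what ensures the norm-continuity needed to invoke Lemma 2.1(ii). The hypothesis that closed bounded subsets of $X$ are compact is used exactly once, to upgrade the boundedness of $H$ provided by \cite[Proposition 3.1]{vv} to compactness.
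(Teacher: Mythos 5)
Your proposal is correct and follows essentially the same route as the paper: define $T_c(\mu)(f)=\mu(u(f))$, use \cite[Proposition 3.1]{vv} to see that $H=\overline{\cup\{s(T(y)):y\in s(\mu)\}}$ is closed and bounded (hence compact by hypothesis), show functions vanishing on $H$ are annihilated by $T_c(\mu)$ so that $s(T_c(\mu))\subset H\subset X$, and verify the retraction identity; the $P_c^*$ case is handled the same way. You are in fact slightly more explicit than the paper in justifying the two vanishing steps via Lemma 2.1(ii) and in checking continuity and idempotence.
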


\begin{proof}
For every $\mu\in P_c(Y)$ define $T_c(\mu)\colon C(X)\to\mathbb R$
by $T_c(\mu)(f)=\mu(u(f))$, $f\in C(X)$. Obviously, each $T_c(\mu)$
is linear. Let us show that $T_c(\mu)\in P_c(X)$ for all $\mu\in
P_c(Y)$. Since $u$ has compact supports, the map $T\colon Y\to
P_c(X)$ generated by $u$ is continuous. Hence, $T\big(s(\mu)\big)$
is a compact subset of $P_c(X)$ (recall that $s(\mu)\subset Y$ is
compact). Then by \cite{a1} (see also \cite[Proposition 3.1]{vv}),
$H(\mu)=\overline{\cup\{s(T(y)):y\in s(\mu)\}}$ is closed and
bounded in $X$, and hence compact. Let us show that the support of
$T_c(\mu)$ is compact. That will be done if we prove that
$s(T_c(\mu))\subset H(\mu)$. To this end, let $f(H(\mu))=0$ for some
$f\in C(X)$. Consequently, $T(y)(f)=0$ for all $y\in s(\mu)$. So,
$u(f)(s(\mu))=0$. The last equality means that $T_c(\mu)(f)=0$.
Hence, each $T_c(\mu)$ has a compact support and $T_c$ is a map from
$P_c(Y)$ to $P_c(X)$. It is easily seen that
$P_c(i)\big(T_c(\mu)\big)=\mu$ for all $\mu\in
P_c(i)\big(P_c(X)\big)$. Therefore, $P_c(i)\circ T_c$ is a
retraction from $P_c(Y)$ onto $P_c(i)\big(P_c(X)\big)$.

Now, we consider the linear operators $T_c^*(\nu)\colon
C^*(X)\to\mathbb R$, $T_c^*(\nu)(h)=\nu(u(h))$ with $\nu\in
P_c^*(Y)$ and $h\in C^*(X)$. Observed that $u(h)\in C^*(Y)$ for
$h\in C^*(X)$ because $u$ is a regular operator, so the above
definition is correct. To show that $T_c^*$ is a map from $P_c^*(Y)$
to $P_c^*(X)$, for every $\nu\in P_c^*(Y)$ take the unique $\mu\in
P_c(Y)$ with $j_Y(\mu)=\nu$. Then $s(\mu)=s^*(\nu)$ according to
Proposition 2.1. Hence, $T_c^*(\nu)(h)=0$ provided $h\in C^*(X)$
with $h|s\big(T_c(\mu)\big)=0$. So, the support of $T_c^*(\nu)$ is
contained in $s\big(T_c(\mu)\big)$. This means that $T_c^*$ maps
$P_c^*(Y)$ into $P_c^*(X)$. Moreover, one can show that
$P_c^*(i)\circ T_c^*$ is a retraction.
\end{proof}

Ditor and Haydon \cite{dh} proved that if $X$ is a compact space,
then $P(X)$ is an absolute retract if and only if $X$ is a Dugundji
space of weight $\leq\aleph_1$. A similar result concerning the
space of all $\sigma$-additive probability measures was established
by Banakh-Chigogidze-Fedorchuk \cite{bcf}. Next theorem shows that
the same is true when $P_c(X)$ or $P_c^*(X)$ is an $AR$.

\begin{thm}
For a space $X$ the following are equivalent:
\begin{itemize}
\item[(i)] $P_c(X)$ $($resp., $P_c^*(X)$$)$ is an absolute retract;
\item[(ii)] $P_c(X)$ $($resp., $P_c^*(X)$$)$ is an $AE(0)$;
\item[(iii)] $X$ is a Dugundji space of weight $\leq\aleph_1$.
\end{itemize}
\end{thm}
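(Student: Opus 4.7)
I would prove the three-way equivalence cyclically, with (i)$\Rightarrow$(ii) and (iii)$\Rightarrow$(i) being the routine directions and (ii)$\Rightarrow$(iii) carrying the real content. For (i)$\Rightarrow$(ii), the standard observation is that every AR is an $AE(0)$: given a $0$-dimensional $Z$, a subspace $Z_0\subset Z$, and a $Z$-normal map $g\colon Z_0\to X$, fix a $C$-embedding $X\hookrightarrow\mathbb R^A$, extend $g$ coordinatewise by normality to $\bar g\colon Z\to\mathbb R^A$, and compose with a retraction $\mathbb R^A\to X$ (which exists since $X$ is an AR). For (iii)$\Rightarrow$(i), a Dugundji space is compact, so $P_c(X)=P_c^*(X)=P(X)$, and the Ditor--Haydon theorem \cite{dh} directly gives $P(X)\in AR$ whenever $X$ is Dugundji of weight $\leq\aleph_1$.

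The substantive direction is (ii)$\Rightarrow$(iii). The plan is to first show that $X$ must be compact, and then invoke Ditor--Haydon in the reverse direction. Assume $P_c^*(X)\in AE(0)$ (the case $P_c(X)\in AE(0)$ will be reduced to this: on the pseudocompact part $j_X$ is a homeomorphism, and if $X$ fails to be pseudocompact the arguments around Proposition 2.5(ii) force a $C$-embedded copy of $\mathbb N$ and yield a contradiction). The map $m_X^*$ embeds $P_c^*(X)$ as a convex, bounded subset of $\mathbb R^{C^*(X)}$ (each coordinate $\mu(f)$ is bounded by $\|f\|_\infty$), whose closure is homeomorphic to $P(\beta X)$.

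Using Theorem 4.1 applied to the inclusion $P_c^*(X)\hookrightarrow P(\beta X)$, the $AE(0)$ hypothesis yields a regular extension operator $C^*(P_c^*(X))\to C^*(P(\beta X))$ with compact supports, equivalently a map $T\colon P(\beta X)\to P_c^*(P_c^*(X))$ with $T(\mu)=\delta_\mu$ for every $\mu\in P_c^*(X)$. Since $P(\beta X)$ is a compact convex subset of the locally convex space $\mathbb R^{C^*(X)}$, the barycenter construction of \cite{b1} (as used in Proposition 3.1 and Corollary 4.4) provides a continuous map $b\colon P_c^*(P_c^*(X))\to P(\beta X)$ whose composition $b\circ T\colon P(\beta X)\to P(\beta X)$ is the identity on $P_c^*(X)$. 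A lower-semicontinuity argument on supports (in the spirit of Lemma 3.3 and Proposition 3.4) forces $b\circ T$ to take values in the closed set $P_c^*(X)$ everywhere, producing a genuine retraction $P(\beta X)\to P_c^*(X)$. As $P(\beta X)$ is compact, so is $P_c^*(X)$, whence $P_c^*(X)=P(\beta X)$ and $X=\beta X$, so $X$ is compact. Now $P_c^*(X)=P(X)\in AE(0)$, and Ditor--Haydon, applied in the direction ``$P(X)\in AE(0)\Rightarrow X$ Dugundji of weight $\leq\aleph_1$'', finishes the proof.

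The hardest step will be promoting the $AE(0)$-hypothesis on $P_c^*(X)$ to an honest extension operator into $C^*(P(\beta X))$: one must verify that $P_c^*(X)$ is $C^*$-embedded in $P(\beta X)$ (using density of finitely supported measures together with compactness of $P(\beta X)$) and then confirm that the retraction produced really lands inside $P_c^*(X)$ at every point of $\beta X\setminus X$ (this is precisely where compactness of $X$ gets forced). The rest of the argument is then a straightforward appeal to the already cited classical results.
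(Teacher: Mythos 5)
Your cyclic skeleton and the two routine directions match the paper, and you have correctly identified that the whole weight of $(ii)\Rightarrow(iii)$ rests on proving $X$ compact. But the mechanism you propose for that step has a genuine gap. Theorem 4.1/4.2 produces a regular extension operator only over a $C$-embedding (resp.\ $C^*$-embedding), and the inclusion $P_c^*(X)\hookrightarrow P(\beta X)$, while dense, is in general \emph{not} a $C^*$-embedding. Take $X=\mathbb N$: the sets $A=\{\delta_n:n\ge 1\}$ and $B=\{(1-1/n)\delta_n+(1/n)\delta_{n+1}:n\ge 1\}$ are disjoint and closed in the metrizable (hence normal) space $P_c^*(\mathbb N)$, so they are completely separated there; yet their closures in $P(\beta\mathbb N)$ both contain points $\delta_p$ with $p\in\beta\mathbb N\setminus\mathbb N$, so no bounded continuous separating function extends over $P(\beta\mathbb N)$. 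Consequently the $AE(0)$ hypothesis gives you no extension operator $C^*(P_c^*(X))\to C^*(P(\beta X))$ and no map $T$ defined on all of $P(\beta X)$. The subsequent step is also circular: you invoke ``the closed set $P_c^*(X)$'' inside $P(\beta X)$, but $P_c^*(X)$ is a dense convex subset of $P(\beta X)$, so it is closed there if and only if it equals $P(\beta X)$, i.e.\ if and only if $X$ is already compact --- exactly what you are trying to prove. A dense proper subset is never a retract of a compact space, so the retraction you aim to build cannot exist short of the conclusion. Your parenthetical reduction of the $P_c(X)$ case to the $P_c^*(X)$ case is likewise only a gesture; it omits the needed step that an $AE(0)$-space is realcompact, hence $X$ (closed in $P_c(X)$) is realcompact, hence not pseudocompact unless compact.

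The paper's proof of compactness runs by contradiction along a different route that you would need to adopt. If $X$ is not compact, the realcompactness argument above produces a closed $C$-embedded copy $Y$ of $\mathbb N$ in $X$; since $Y\in AE(0)$, Theorem 4.1 gives a regular extension operator $C(Y)\to C(X)$ with compact supports, and Lemma 4.4 then exhibits $P_c(Y)$ (resp.\ $P_c^*(Y)$) as a retract of $P_c(X)$ (resp.\ $P_c^*(X)$), hence an $AE(0)$. This is killed by the structure theory of $P_c(\mathbb N)$ and $P_c^*(\mathbb N)$ already established: a second countable $AE(0)$ is \v{C}ech-complete, contradicting Proposition 2.6 for $P_c^*(\mathbb N)$; and an $AE(0)$ with $G_\delta$ points is metrizable (Chigogidze), contradicting Proposition 2.5(ii) for $P_c(\mathbb N)$. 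Once $X$ is compact, $P_c(X)=P_c^*(X)=P(X)$, Corollary 4.3 upgrades $AE(0)$ to $AR$ for this convex compact set, and Ditor--Haydon finishes exactly as you intended.
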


\begin{proof}
$(i)\Rightarrow (ii)$ This implication is trivial because every $AR$
is an $AE(0)$.

$(ii)\Rightarrow (iii)$ It suffices to show that $X$ is compact.
Indeed, then both $P_c(X)$ and $P_c^*(X)$ are $AE(0)$ and coincide
with $P(X)$. So, by Corollary~\ref{AR}, $P(X)$ is an $AR$. Applying
the mentioned above result of Ditor-Haydon, we obtain that $X$ is a
Dugundji space of weight $\leq\aleph_1$.

Suppose $X$ is not compact. Since $P_c(X)$ (resp., $P_c^*(X)$) is an
$AE(0)$-space, it is realcompact. Hence, so is $X$ as a closed
subset of $P_c(X)$ (resp., $P_c^*(X)$). Consequently, $X$ is not
pseudocompact (otherwise it would be compact), and there exists a
closed $C$-embedded subset $Y$ of $X$ homeomorphic to $\mathbb N$
(see the proof of Proposition 2.6). Since $Y$ is an $AE(0)$,
according to Theorem~\ref{main}, there exists a regular extension
operator $u\colon C(Y)\to C(X)$ with compact supports. Then, by
Lemma 4.4, $P_c(Y)$ (resp., $P_c^*(Y)$) is homeomorphic to a retract
of $P_c(X)$ (resp., $P_c^*(X)$). Hence, one of the spaces $P_c(Y)$
and $P_c^*(Y)$ is an $AE(0)$ (as a retract of an $AE(0)$-space).
Suppose $P_c^*(Y)\in AE(0)$. Since $P_c^*(Y)$ is second countable,
this implies $P_c^*(Y)$ is \v{C}ech-complete. Hence, by Proposition
2.6, $Y$ is pseudocompact, a contradiction. If $P_c(Y)\in AE(0)$,
then $P_c(Y)$ is metrizable according to a result of Chigogidze
\cite{ch2} stating that every $AE(0)$-space whose points are
$G_\delta$-sets is metrizable (the points of $P_c(Y)$ are $G_\delta$
because $j_Y\colon P_c(Y)\to P_c^*(Y)$ is an one-to-one surjection
and  $P_c^*(Y)$ is metrizable). But  by Proposition 2.5(ii),
$P_c(Y)$ is metrizable only if $Y$ is compact and metrizable. So, we
have again a contradiction.

$(iii)\Rightarrow (i)$ This implication follows from the stated
above result of Ditor and Haydon \cite{dh}.
\end{proof}

\section{Properties preserved by Milyutin maps}

In this section we show that some topological properties are
preserved under Milyutin maps. Let $\mathfrak{F}$ be a family of
closed subsets of $X$. We say that {\em $X$ is collectionwise normal
with respect to $\mathfrak{F}$} if for every discrete family
$\{F_\alpha:\alpha\in A\}\subset\mathfrak{F}$  there exists a
discrete family $\{V_\alpha:\alpha\in A\}$ of open in $X$ sets with
$F\alpha\subset V_\alpha$ for each $\alpha\in A$. When $X$ is
collectionwise normal with respect to the family of all closed
subsets, it is called collectionwise normal.

\begin{thm}
Every weakly Milyutin map preserves paracompactness and
collectionwise normality.
\end{thm}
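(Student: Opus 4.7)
The plan is to lift every problem from $Y$ to $X$ via the weakly Milyutin structure, solve it on $X$ using the hypothesis, and then push the solution back down to $Y$ by means of the associated map $T\colon Y\to P_c^*(X)$ with $s^*(T(y))\subset f^{-1}(y)$. The general push-down device was already used in the proof of Proposition~3.1: for any complete locally convex space $E$ and any continuous $\sigma\colon X\to E$, the composition $\tau=b\circ P_c^*(\sigma)\circ T\colon Y\to E$, i.e.\ $\tau(y)=\int_X\sigma\,dT(y)$, is continuous (continuity of the barycenter $b\colon P_c^*(E)\to E$ being provided by \cite[Theorem 3.4 and Proposition 3.10]{b1}), and $\tau(y)\in\overline{\conv}\,\sigma(s^*(T(y)))\subset\overline{\conv}\,\sigma(f^{-1}(y))$.

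For paracompactness I would appeal to Michael's selection characterization: a $T_1$-space is paracompact iff every lower semi-continuous set-valued mapping from it into a Banach space with non-empty closed convex values admits a continuous selection. Given such a mapping $\Phi\colon Y\to E$, the composition $\Phi\circ f\colon X\to E$ is again lower semi-continuous with closed convex values, so Michael's theorem applied to the paracompact $X$ produces a continuous selection $\sigma\colon X\to E$ of $\Phi\circ f$, i.e.\ $\sigma(x)\in\Phi(f(x))$ for every $x\in X$. The push-forward $\tau(y)=\int_X\sigma\,dT(y)$ then lies in $\overline{\conv}\,\sigma(s^*(T(y)))\subset\overline{\conv}\,\Phi(y)=\Phi(y)$ and is continuous, so it is a continuous selection of $\Phi$; the converse direction of Michael's theorem yields paracompactness of $Y$.

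For collectionwise normality, let $\{F_\alpha\}_{\alpha\in\Lambda}$ be a discrete closed family in $Y$. Its preimage $\{f^{-1}(F_\alpha)\}$ is discrete closed in $X$, and collectionwise normality together with normality of $X$ yield a discrete open family $\{U_\alpha\}$ with $f^{-1}(F_\alpha)\subset U_\alpha$, along with Urysohn functions $g_\alpha\in C^*(X)$, $0\le g_\alpha\le 1$, equal to $1$ on $f^{-1}(F_\alpha)$ and to $0$ off $U_\alpha$. Since the $U_\alpha$ are pairwise disjoint and the family is locally finite, the rule $\sigma(x)=(g_\alpha(x))_{\alpha\in\Lambda}$ defines a continuous map from $X$ into the Banach space $c_0(\Lambda)$. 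The induced $\tau\colon Y\to c_0(\Lambda)$ sends every $y\in F_\alpha$ to the unit vector $e_\alpha$, because $s^*(T(y))\subset f^{-1}(F_\alpha)\subset U_\alpha$ forces $g_\alpha\equiv 1$ and $g_\beta\equiv 0$ ($\beta\neq\alpha$) on $s^*(T(y))$. Pulling back the sup norm gives a continuous pseudo-metric $d(y_1,y_2)=\|\tau(y_1)-\tau(y_2)\|_\infty$ on $Y$ with $d(F_\alpha,F_\beta)=\|e_\alpha-e_\beta\|_\infty=1$, and a standard pseudo-metric argument (using the triangle inequality to show that no point of $Y$ can be within distance $1/3$ of two distinct $F_\alpha$'s) shows that $G_\alpha=\{y\in Y:d(y,F_\alpha)<1/3\}$ is a discrete open expansion of $\{F_\alpha\}$. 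The main technical obstacle in both halves is the continuity of the push-forward $\tau$, which ultimately rests on continuity of the barycenter on $P_c^*(E)$ together with the compactness of the supports $s^*(T(y))$ that the weakly Milyutin hypothesis supplies.
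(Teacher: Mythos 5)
Your collectionwise-normality argument is essentially the paper's own proof in different packaging: the paper applies the averaging operator $u$ directly to the Urysohn functions $h_\alpha$ and takes the disjoint open family $u(h_\alpha)^{-1}\big((1/2,1]\big)$, whereas you assemble the same functions into $\sigma\colon X\to c_0(\Lambda)$ and push forward, so that $\tau=(u(g_\alpha))_{\alpha}$. Since $\sigma$ takes values in the closed unit ball of $c_0(\Lambda)$ (a bounded, closed, convex, complete set), the barycentric construction of Proposition 3.1 applies and this half is correct, modulo tightening the constant $1/3$ so that the triangle inequality really forces $\|e_\alpha-e_\beta\|_\infty<1$.

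The paracompactness half takes a genuinely different route (Michael's selection characterization, versus the paper's transport of a subordinated partition of unity through $u$ followed by \cite{m2}), but as written it has a gap: the barycenter map is \emph{not} continuous on $P_c^*(E)$ for a Banach space $E$; the results \cite[Theorem 3.4 and Proposition 3.10]{b1} invoked in Proposition 3.1 give continuity of $b$ only on $P_c^*(K)$ with $K$ closed, bounded, convex and complete. Concretely, in $P_c^*(\mathbb R)$ the measures $\mu_n=(1-1/n)\delta_0+\sum_{k=2}^{n+1}n^{-2}\delta_k$ converge to $\delta_0$ (convergence is tested only against bounded functions), yet their barycenters tend to $1/2$. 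This is precisely the phenomenon separating weakly Milyutin from Milyutin maps (cf.\ Proposition 2.5(i)): for a weakly Milyutin $f$ you control $y\mapsto T(y)(h)$ only for bounded $h$, so if the selection $\sigma$ of $\Phi\circ f$ is unbounded, the continuity of $\tau(y)=\int_X\sigma\,dT(y)$ is unjustified. The repair is short: in the converse direction of Michael's theorem one only needs selections for carriers whose values are the sets $\overline{\conv\{e_\alpha: y\in\omega_\alpha\}}$ inside the unit ball of $\ell_1(A)$ attached to an open cover $\omega$ of $Y$, so you may assume $\Phi$, and hence $\sigma$, is bounded, after which Proposition 3.1 gives the continuity you need. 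State that restriction and your argument goes through; the paper's proof sidesteps the issue entirely because a partition of unity consists of bounded functions.
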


\begin{proof}
Let $f\colon X\to Y$ be a weakly Milyutin map and $u\colon C^*(X)\to
C^*(Y)$ a regular averaging operator for $f$ with compact supports.

Suppose $X$ is collectionwise normal, and let $\{F_\alpha:\alpha\in
A\}$ be a discrete family of closed sets in $Y$. Then
$\{f^{-1}(F_\alpha):\alpha\in A\}$ is a discrete collection of
closed sets in $X$. So, there exists a discrete family
$\{V_\alpha:\alpha\in A\}$ of open sets in $X$ with
$f^{-1}(F_\alpha)\subset V_\alpha$, $\alpha\in A$. Let
$V_0=X-\cup\{f^{-1}(F_\alpha):\alpha\in A\}$ and
$\gamma=\{V_\alpha:\alpha\in A\}\cup\{V_0\}$. Since $\gamma$ is a
locally finite open cover of $X$ and $X$ is normal (as
collectionwise normal), there exists a partition of unity
$\xi=\{h_\alpha:\alpha\in A\}\cup\{h_0\}$ on $X$ subordinated to
$\gamma$ such that $h_\alpha\big(f^{-1}(F_\alpha)\big)=1$ for every
$\alpha$. Observe that $h_{\alpha(1)}(x)+h_{\alpha(2)}(x)\leq 1$ for
any $\alpha(1)\neq \alpha(2)$ and any $x\in X$. So,
$u(h_{\alpha(1)})(y)+u(h_{\alpha(2)})(y)\leq 1$ for all $y\in Y$.
This yields that $\{u(h_\alpha)^{-1}\big((1/2,1]\big):\alpha\in A\}$
is a disjoint open family in $Y$. Moreover, $F_\alpha\subset
u(h_\alpha)^{-1}\big((1/2,1]\big)$ for every $\alpha$. Therefore,
$Y$ is collectionwise normal (see \cite[Theorem 5.1.17]{e}).

Let $X$ be paracompact and $\omega$ an open cover of $Y$. So, there
exists a locally finite open cover $\gamma $ of $X$ which an
index-refinement of $f^{-1}(\omega)$. Let $\xi$ be a partition of
unity on $X$ subordinated to $\gamma$. It is easily seen that
$u(\xi)$ is a partition of unity on $Y$ subordinated to $\omega$.
Hence, by \cite{m2}, $Y$ is paracompact.
\end{proof}

\begin{cor}
Let $f:X\to Y$ be a weakly Milyutin map and $X$  a $($completely$)$
metrizable space. Then $Y$ is also $($completely$)$ metrizable.
\end{cor}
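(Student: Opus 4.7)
The plan is to deduce paracompactness of $Y$ from Theorem~5.1 and then build a $\sigma$-locally finite base for $Y$ by pushing a suitable sequence of partitions of unity from $X$ to $Y$ via a regular averaging operator $u\colon C^*(X)\to C^*(Y)$ with compact supports (equivalently, the associated map $T\colon Y\to P_c^*(X)$) that is guaranteed by the weak Milyutin hypothesis. Once such a base is exhibited, metrizability of $Y$ follows from the Nagata--Smirnov theorem, paracompactness being already in hand.

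Concretely, I would fix a compatible bounded metric on $X$ and for each $n\geq 1$ choose a locally finite open cover $\gamma_n=\{U_{n,\alpha}:\alpha\in A_n\}$ of $X$ whose mesh is less than $1/n$, together with a subordinate partition of unity $\{h_{n,\alpha}\}$. Setting $g_{n,\alpha}=u(h_{n,\alpha})$ produces nonnegative elements of $C^*(Y)$ summing to $u(1)=1$, and for integers $k\geq 2$ I would put $V_{n,\alpha,k}=g_{n,\alpha}^{-1}\bigl((1-1/k,1]\bigr)$. To check local finiteness of $\{V_{n,\alpha,k}\}_{\alpha\in A_n}$ at a point $y_0\in Y$, observe that the compact support $K_0=s^*(T(y_0))$ meets only finitely many $U_{n,\alpha}$, indexed by some finite $F\subset A_n$; choose $\phi\in C^*(X)$ with $\phi=1$ on $K_0$ vanishing off a small neighborhood of $K_0$ that avoids the remaining $U_{n,\alpha}$. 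The pointwise inequality $\sum_{\alpha\notin F}h_{n,\alpha}\leq 1-\phi$, combined with monotonicity of $u$ (coming from regularity), gives $\sum_{\alpha\notin F}g_{n,\alpha}\leq 1-u(\phi)$; since $u(\phi)(y_0)=T(y_0)(\phi)=1$, continuity of $u(\phi)$ yields a neighborhood of $y_0$ on which $g_{n,\alpha}<1/k$ for every $\alpha\notin F$, so $V_{n,\alpha,k}$ avoids that neighborhood whenever $\alpha\notin F$.

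The main obstacle is verifying that $\bigcup_{n,k}\{V_{n,\alpha,k}\}$ is actually a base: given $y_0\in Y$ and an open neighborhood $W$, one must produce $(n,\alpha,k)$ with $y_0\in V_{n,\alpha,k}\subset W$. This is the step at which the metric structure of $X$ has to be transferred to the topology of $Y$, and the delicate point is that a priori a neighborhood of $y_0$ in $Y$ need not be describable in terms of the $g_{n,\alpha}$. The tools to overcome this are again the identity $u(\phi)(y_0)=T(y_0)(\phi)=1$ whenever $\phi\equiv 1$ on $K_0$, the monotonicity of $u$, and the fact that the mesh of $\gamma_n$ tends to zero, which together force the mass of $T(y_0)$ to concentrate in a single $U_{n,\alpha}$ for large $n$, making $g_{n,\alpha}(y_0)$ close to $1$ and $V_{n,\alpha,k}$ small. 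Nagata--Smirnov then delivers metrizability of $Y$.

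For complete metrizability, I would use that a metrizable space is completely metrizable iff it is \v{C}ech-complete, i.e.\ a $G_\delta$ in its Stone--\v{C}ech compactification. Writing $X=\bigcap_n G_n$ with $G_n$ open in $\beta X$ and using the associated map $T$ together with the lsc support map $\Phi_X^*\colon P_c^*(X)\to X$ from Lemma~3.3 to pull these $G_\delta$ data back to $Y$, one checks that $Y$ is also $G_\delta$ in a suitable compactification, hence \v{C}ech-complete; combined with the metrizability already proved, this upgrades to complete metrizability of $Y$.
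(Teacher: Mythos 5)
Your reduction to Theorem 5.1 for paracompactness is fine, and the local-finiteness computation for the sets $V_{n,\alpha,k}=g_{n,\alpha}^{-1}\bigl((1-1/k,1]\bigr)$ is essentially sound (monotonicity of $u$ from regularity, and $u(\phi)(y_0)=T(y_0)(\phi)=1$ whenever $\phi\equiv 1$ on $s^*(T(y_0))$). But the step you yourself flag as the ``main obstacle'' is where the argument actually breaks, and the mechanism you propose to overcome it is false. The measure $T(y_0)$ is a \emph{fixed} element of $P_c^*(X)$ whose support $K_0=s^*(T(y_0))$ is merely some compact subset of the fibre $f^{-1}(y_0)$; it need not be a Dirac measure, and refining the covers $\gamma_n$ does not concentrate its mass in a single $U_{n,\alpha}$ --- it spreads the mass over more and more of them. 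Concretely, if $T(y_0)=\frac12\delta^*_{x_1}+\frac12\delta^*_{x_2}$ with $x_1\neq x_2$, then as soon as $1/n<d(x_1,x_2)$ every $g_{n,\alpha}(y_0)=T(y_0)(h_{n,\alpha})\leq 1/2$, so $y_0$ lies in no $V_{n,\alpha,k}$ with $k\geq 2$: your family is not even a cover of $Y$, let alone a base, and Nagata--Smirnov cannot be invoked. The trick from Theorem 5.1 works only for functions that are identically $1$ on a saturated set $f^{-1}(F_\alpha)$, hence on the whole support of $T(y)$; a small-mesh partition of unity on $X$ is not saturated with respect to $f$, and this is precisely why the base property cannot be extracted from the $g_{n,\alpha}$ individually. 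The complete-metrizability paragraph has the same character: ``pulling back the $G_\delta$ data'' through $T$ and the lsc support map is not a construction --- \v{C}ech-completeness is not preserved by arbitrary continuous surjections, and you never identify a class of maps (open, perfect, \dots) along which it does transfer.

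The paper avoids all of this by working with set-valued selections rather than partitions of unity: $\Phi_X^*\circ T\colon Y\to X$ is an lsc compact-valued selection of $f^{-1}$ (Lemma 3.3); since $Y$ is paracompact by Theorem 5.1, Michael's theorem yields a usc compact-valued selection $\psi$ of it, and $f$ restricted to $X_1=\cup\{\psi(y):y\in Y\}$ is a perfect surjection onto $Y$, so $Y$ is metrizable as a perfect image of a metrizable space. For the completeness assertion the paper invokes a result of Ageev and Tymchatyn producing a closed $X_0\subset X$ on which $f$ restricts to an \emph{open} surjection, and open metric images of completely metrizable spaces are completely metrizable. If you want to salvage your write-up, this is the route to take: the information carried by $T$ that you actually need is the support multifunction, not the values of $u$ on a mesh-refining sequence of partitions of unity.
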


\begin{proof}
Let $T\colon Y\to P_c^*(X)$ be a map associated with $f$. Then
$\phi=\Phi_X^*\circ T\colon Y\to X$ is a lsc compact-valued map (see
Lemma 3.3 for the map $\Phi_X^*$) such that $\phi(y)\subset
f^{-1}(y)$ for every $y\in Y$. Since $Y$ is paracompact (by Theorem
4.1), we can apply Michael's selection theorem \cite{m3} to find an
upper semi-continuous (br., usc) compact-valued selection
$\psi\colon Y\to X$ for $\phi$ (recall that $\psi$ is usc provided
the set $\{y\in Y:\psi(y)\cap F\neq\varnothing\}$ is closed in $Y$
for every closed $F\subset X$). Then $f|X_1:X_1\to Y$ is a perfect
surjection, where $X_1=\cup\{\psi(y):y\in Y\}$. Hence, $Y$ is
metrizable as a perfect image of a metrizable space.

If $X$ is completely metrizable, then so is $Y$. Indeed, by
\cite[Theorem 1.2]{at}, there exists a closed subset $X_0\subset X$
such that $f|X_0:X_0\to X$ is an open surjection. Then $Y$ is
complete (as a metric space being an open image of a complete metric
space).
\end{proof}

\begin{pro}
Let $f\colon X\to Y$ be a weakly Milyutin map with $X$ being a
product of metrizable spaces. Then we have:
\begin{itemize}
\item[(i)] The closure of any family of $G_\delta$-sets in $X$ is a
zero-set in $X$;
\item[(ii)] $X$ is collectionwise normal with respect to the family
of all closed $G_\delta$-sets in $X$.
\end{itemize}
\end{pro}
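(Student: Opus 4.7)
The plan is to exploit the structure of $X=\prod_{\alpha\in A}X_\alpha$ as a product of metrizable spaces; I would note up front that the weakly Milyutin hypothesis on $f$ plays no role in either conclusion, since both assertions concern $X$ alone. The principal tool is the classical factorization theorem: every $h\in C(X)$ depends on countably many coordinates, so every zero-set in $X$ is the preimage under a countable projection $p_B\colon X\to X(B)=\prod_{\alpha\in B}X_\alpha$ of a zero-set in $X(B)$, which is a metric and hence perfectly normal space.

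For (i), let $\mathcal{H}=\{H_\beta\}$ be a family of $G_\delta$-sets and set $F=\overline{\bigcup_\beta H_\beta}$. Replacing each $H_\beta$ by $\overline{H_\beta}$ leaves $F$ unchanged, so I may assume every $H_\beta$ is a closed $G_\delta$. For each $\beta$ I would produce a countable $B_\beta\subset A$ and a zero-set $Z_\beta\subset X(B_\beta)$ with $H_\beta=p_{B_\beta}^{-1}(Z_\beta)$, using that every open set in an intersection representation of $H_\beta$ is a union of basic open sets each depending on finitely many coordinates, and that in the metric space $X(B_\beta)$ closed $G_\delta$-sets are zero-sets. Then, in the spirit of the $T$-admissible selection used in the proof of Theorem~\ref{main}, I would build a single countable $B\subset A$ that ``reflects'' the whole family, so that $F=p_B^{-1}(\overline{p_B(\bigcup_\beta H_\beta)})$; the right-hand side is a zero-set in the metric space $X(B)$ by perfect normality, and its preimage under $p_B$ is a zero-set in $X$.

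For (ii), let $\{F_\alpha\}$ be a discrete family of closed $G_\delta$-sets. By (i) each $F_\alpha$ is a zero-set, say $F_\alpha=\varphi_\alpha^{-1}(0)$ with $\varphi_\alpha\colon X\to[0,1]$. I would first invoke classical normality of $X$ (products of metric spaces being normal) to obtain pairwise disjoint open sets $W_\alpha\supset F_\alpha$, and then refine to a discrete swelling by setting $V_\alpha=\{x:\varphi_\alpha(x)<1/2\}\cap W_\alpha$; local finiteness of $\{V_\alpha\}$ follows from the discreteness of $\{F_\alpha\}$ together with the control provided by the $\varphi_\alpha$'s on the complements of small neighborhoods of $F_\alpha$.

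The main obstacle is step (i), specifically the jump from a pointwise countable support $B_\beta$ to a single countable $B$ capturing an uncountable family: the naive union $\bigcup_\beta B_\beta$ need not be countable, so one cannot simply aggregate. Instead I expect to need a reflection/closing-off argument analogous to the $T$-admissible construction of Theorem~\ref{main} (countably many iterations of ``expand $B$ so that all relevant continuous functions on the subproduct factor through it''), or equivalently an appeal to the $\kappa$-metrizability of products of metric spaces via a Shchepin-style canonical function witnessing $F$.
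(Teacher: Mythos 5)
Your reading of the statement is where things go wrong. Although the proposition as printed says ``in $X$'' twice, this is a typo: the paper's own proof begins ``Let $G$ be a union of $G_\delta$-sets in $Y$'' and ends by producing $u(h)\in C^*(Y)$ with $u(h)^{-1}(0)=\overline{G}$, and part (ii) concludes with a discrete open family in $Y$ separating a discrete family of closed $G_\delta$-sets of $Y$. All of Section 5 is about pushing properties of $X$ forward to $Y$ through the regular averaging operator $u\colon C^*(X)\to C^*(Y)$, and the moment you observed that ``the weakly Milyutin hypothesis plays no role'' you should have suspected that the intended conclusion concerns $Y$. The paper's argument for (i) is short: take $h\in C^*(X)$ with $h^{-1}(0)=\overline{f^{-1}(G)}$ --- this exists by Klebanov's theorem, which is exactly the $X$-version of (i) and is \emph{cited}, not reproved --- and then check that $u(h)^{-1}(0)=\overline{G}$ using that $s^*(T(y))\subset f^{-1}(y)$ and the regularity of $u$. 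For (ii) the separating functions are built on the $\Sigma$-product $\Sigma(a)\subset X$, which is collectionwise normal (Gul'ko) and $C$-embedded in $X$ (Tkachenko), using that each closed $G_\delta$-set $H_\alpha=f^{-1}(F_\alpha)$ satisfies $H_\alpha=\overline{H_\alpha\cap\Sigma(a)}$; the resulting functions $g_\alpha$ are then transported to $Y$ by $u$ as in Theorem 5.1.

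Even judged as a proof of the literal $X$-statements, the proposal has two genuine gaps. In (i) you are attempting to reprove Klebanov's theorem and you concede the decisive step yourself: an uncountable family of $G_\delta$-sets admits no single countable coordinate set reflecting it, and the closing-off you gesture at is precisely the hard content of that theorem, not a routine adaptation of the $T$-admissibility argument of Theorem 4.1 (which closes off under one map into a second-countable space, a fundamentally easier situation). In (ii) the appeal to ``classical normality of $X$ (products of metric spaces being normal)'' is false: $\mathbb N^{\omega_1}$ is a product of metric spaces and is not normal, so you cannot begin by separating the $F_\alpha$ by disjoint open sets in $X$. This failure is exactly why the paper routes the separation through the $\Sigma$-product rather than through $X$ itself.
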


\begin{proof}Let $X=\prod\{X_\gamma:\gamma\in\Gamma\}$, where each
$X_\gamma$ is metrizable. Suppose $u\colon C^*(X)\to C^*(Y)$ is a
regular averaging operator for $f$ with compact supports.

(i) Let $G$ be a union of $G_\delta$-sets in $Y$. Then so is
$f^{-1}(G)$ in $X$ and, by \cite[Corollary]{k}, there exists $h\in
C^*(X)$ with $h^{-1}(0)=\overline{f^{-1}(G)}$. Since $h(T(y))=0$ for
each $y\in G$, $u(h)(G)=0$. On the other hand, $\inf\{h(x):x\in
T(y)\}>0$ for $y\not\in\overline{G}$. Hence, $u(h)(y)>0$ for any
$y\not\in\overline{G}$. Consequently, $u(h)^{-1}(0)=\overline{G}$.

(ii) Let $\{F_\alpha:\alpha\in A\}$ be a discrete family of closed
$G_\delta$-sets in $Y$. Then so is the family
$\{H_\alpha=f^{-1}(F_\alpha):\alpha\in A\}$ in $X$. Moreover, by
(i), each $F_\alpha$ is a zero-set in $Y$, hence $H_\alpha$ is a
zero-set in $X$.

We can assume that $\Gamma$ is uncountable (otherwise $X$ is
metrizable and the proof follows from Theorem 5.1). Consider the
$\Sigma$-product $\Sigma(a)$ of all $X_\gamma$ with a base-point
$a\in X$. Since $\Sigma(a)$ is $G_\delta$-dense in $X$ (i.e., every
$G_\delta$-subset of $X$ meets $\Sigma(a)$), $\Sigma(a)$ is
$C$-embedded in $X$ \cite{t} and

\medskip\noindent
(7)\hspace{0.3cm} $H_\alpha=\overline{H_\alpha\cap\Sigma(a)}$ for
any $\alpha$.

\smallskip\noindent
Because $\Sigma(a)$ is collectionwise normal \cite{g}, there exists
a discrete family $\{W_\alpha:\alpha\in A\}$ of open subsets of
$\Sigma(a)$ such that $H_\alpha\cap\Sigma(a)\subset W_\alpha$,
$\alpha\in A$. Let
$W_0=\Sigma(a)-\cup\{H_\alpha\cap\Sigma(a):\alpha\in A\}$.  Choose a
partition of unity $\{h_\alpha:\alpha\in A\}\cup\{h_0\}$ in
$\Sigma(a)$ subordinated to the locally finite cover
$\{W_\alpha:\alpha\in A\}\cup\{W_0\}$ of $\Sigma(a)$ such that
$h_\alpha\big(H_\alpha\cap\Sigma(a)\big)=1$ for each $\alpha$. Since
$\Sigma(a)$ is $C$-embedded in $X$, each $h_\alpha$  can be extended
to a function $g_\alpha$ on $X$. Because of (7),
$g_\alpha(H_\alpha)=1$, $\alpha\in A$. The density of $\Sigma(a)$ in
$X$ implies that $g_{\alpha(1)}(x)+g_{\alpha(2)}(x)\leq 1$ for any
$\alpha(1)\neq \alpha(2)$ and any $x\in X$. As in the proof of
Theorem 5.1, this implies that $F_\alpha\subset
U_\alpha=u(g_\alpha)^{-1}\big((1/2,1]\big)$ and the family
$\{U_\alpha:\alpha\in A\}$ is disjoint. Then, as in the proof of
\cite[Theorem 5.1.17]{e}, there exists a discrete family
$\{V_\alpha:\alpha\in A\}$ of open subsets of $Y$ with
$F_\alpha\subset V_\alpha$, $\alpha\in A$.
\end{proof}

A space $X$ is called {\em $k$-metrizable} \cite{sc1} if there
exists a {\em $k$-metric on $X$}, i.e., a non-negative real-valued
function $d$ on $X\times\mathcal{RC}(X)$, where $\mathcal{RC}(X)$
denotes the family of all regularly closed subset of $X$ (i.e.,
closed sets $F\subset X$ with $F=\overline{~int_X(F)}$) satisfying
the following conditions:
\begin{itemize}
\item[(K1)] $d(x,F)=0$ iff $x\in F$ for every $x\in X$ and
$F\in\mathcal{RC}(X)$;
\item[(K2)]  $F_1\subset F_2$ implies $d(x,F_2)\leq d(x,F_1)$ for
every $x\in X$;
\item[(K3)] $d(x,F)$ is continuous with respect to $x$ for every
$F\in\mathcal{RC}(X)$;
\item[(K4)] $d\big(x,\overline{\cup\{F_\alpha:\alpha\in
A\}}\big)=\inf\{d(x,F_\alpha):\alpha\in A\}$ for every $x\in X$ and
every increasing linearly ordered by inclusion family
$\{F_\alpha\}_{\alpha\in A}\subset\mathcal{RC}(X)$.
\end{itemize}

If $\mathcal K(X)$ is a family of closed subsets of $X$, then a
function $d\colon X\times\mathcal K(X)\to\mathcal R$ satisfying
conditions $(K1)- (K3)$ with $\mathcal{RC}(X)$ replaced by $\mathcal
K(X)$ is called {\em a monotone continuous annihilator} of the
family $\mathcal K(X)$ \cite{dr}. When $\mathcal K(X)$ consists of
all zero sets in $X$, then any monotone continuous annihilator is
said to be a {\em $\delta$-metric on $X$} \cite{dr}. The well known
notion of stratifiability \cite{bo} can be express as follows: $X$
is stratifiable iff there exists a monotone continuous annihilator
on $X$ for the family of all closed subsets of $X$.

A space $X$ is perfectly $k$-normal \cite{sc2} provided every
$F\in\mathcal{RC}(X)$ is a zero-set in $X$.

\begin{thm}
Every weakly Milyutin map $f:X\to Y$ preserves the following
properties: stratifiability, $\delta$-metrizability, and perfectly
$k$-normality. If, in addition,
$cl_X\big(f^{-1}(U)\big)=f^{-1}\big(cl_Y(U)\big)$ for every open
$U\subset Y$, then $f$ preserves $k$-metrizability.
\end{thm}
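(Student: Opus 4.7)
The plan is to start from a regular averaging operator $u\colon C^*(X)\to C^*(Y)$ with compact supports, produced by Proposition 3.1 from the weakly Milyutin hypothesis. Equivalently, this is a continuous $T\colon Y\to P_c^*(X)$ whose support $K(y):=s^*(T(y))$ is a nonempty compact subset of $f^{-1}(y)$, with $u(h)(y)=T(y)(h)$. Three features of $u$ drive everything: (a) $u$ preserves constants and is positive, hence monotone; (b) $u(h)(y)=0$ whenever $h|_{K(y)}\equiv 0$; (c) $u(h)(y)\geq c$ whenever $h|_{K(y)}\geq c$. Note also that $K(y)\subset f^{-1}(F)$ is equivalent to $y\in F$ for any $F\subset Y$.

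For stratifiability, $\delta$-metrizability, and $k$-metrizability the transfer is uniform. Given a monotone continuous annihilator $d$ on $X$ for the relevant family $\mathcal{K}(X)$, pass to the bounded surrogate $\tilde d=d/(1+d)\in[0,1]$ and define
$$
D(y,F)=u\bigl(\tilde d(\,\cdot\,,f^{-1}(F))\bigr)(y),\qquad F\in\mathcal{K}(Y).
$$
Axiom (K3) is automatic because $u$ carries $C^*(X)$ into $C^*(Y)$; (K2) follows from the monotonicity of $\tilde d$ in its second slot together with (a); and (K1) is delivered by (b), (c), and the continuity of $\tilde d(\cdot,f^{-1}(F))$ on the compact set $K(y)$. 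This requires only that $f^{-1}$ carry $\mathcal{K}(Y)$ into $\mathcal{K}(X)$, which is automatic for all closed sets (stratifiability) and for zero-sets ($\delta$-metrizability). It is precisely what the extra hypothesis $\overline{f^{-1}(V)}=f^{-1}(\overline{V})$ delivers when $\mathcal{K}=\mathcal{RC}$: writing $F=\overline{V}$ with $V=\operatorname{int}F$ gives $f^{-1}(F)=\overline{f^{-1}(V)}\in\mathcal{RC}(X)$.

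For perfect $k$-normality I would argue more directly, without invoking the extra hypothesis. Given $F=\overline{U}\in\mathcal{RC}(Y)$, the set $\overline{f^{-1}(U)}$ lies in $\mathcal{RC}(X)$, so perfect $k$-normality of $X$ yields $h\in C^*(X)$, $h\geq 0$, with $h^{-1}(0)=\overline{f^{-1}(U)}$. The claim is that $u(h)^{-1}(0)=F$. For $y\in U$ we have $K(y)\subset f^{-1}(U)\subset\{h=0\}$, so $u(h)(y)=0$ by (b); continuity of $u(h)$ extends the vanishing to $\overline{U}=F$. Conversely, for $y\notin F$ pick an open $W\ni y$ with $W\cap U=\varnothing$: then $f^{-1}(W)$ is open and disjoint from $f^{-1}(U)$, hence from $\overline{f^{-1}(U)}$, so $h>0$ on the compact $K(y)\subset f^{-1}(W)$, whence $h\geq c>0$ there and $u(h)(y)\geq c$ by (c).

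The main obstacle is (K4) for $k$-metrizability. Given a linearly ordered increasing family $\{F_\alpha\}\subset\mathcal{RC}(Y)$, a short calculation with $V_\alpha=\operatorname{int}F_\alpha$ and $\bigcup_\alpha V_\alpha$, applying the extra hypothesis twice, yields $f^{-1}(\overline{\bigcup F_\alpha})=\overline{\bigcup f^{-1}(F_\alpha)}$. By (K4) for $d$, the continuous functions $\tilde d(\cdot,f^{-1}(F_\alpha))$ decrease pointwise to $\tilde d(\cdot,\overline{\bigcup f^{-1}(F_\alpha)})$; on the compact $K(y)$ the convergence is then uniform by Dini's theorem, so commutation with $u$ (equivalently, monotone convergence against the probability measure $T(y)$) gives $D(y,\overline{\bigcup F_\alpha})=\inf_\alpha D(y,F_\alpha)$. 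This interchange of the monotone limit with the integrating measure is the only nontrivial analytic step in the argument.
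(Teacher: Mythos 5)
Your proposal is correct and follows essentially the same route as the paper: both transfer the annihilator/$k$-metric by applying the averaging operator to $d(\cdot,f^{-1}(F))$ (the paper uses $cl_X\big(f^{-1}(int_Y(F))\big)$, which coincides with $f^{-1}(F)$ under the extra hypothesis), verify (K1)--(K3) from positivity and the compactness of the supports $s^*(T(y))\subset f^{-1}(y)$, and obtain (K4) from uniform convergence on that compact support --- your appeal to Dini's theorem is exactly the finite-subcover argument the paper writes out by hand. The paper dispatches the stratifiability, $\delta$-metrizability and perfect $k$-normality cases with ``the proof is similar,'' and your explicit treatment of those cases is consistent with its method.
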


\begin{proof}
We consider only the case $f$ satisfies the additional condition
which is denoted by (s) (the proof of the other cases is similar).
Let $u:C^*(X)\to C^*(Y)$ be a regular averaging operator for $f$
having compact supports, and $d(x,F)$ be a $k$-metric on $X$. We may
assume that $d(x,F)\leq 1$ for any $x\in X$ and
$F\in\mathcal{RC}(X)$, see \cite{sc1}. Let
$F_G=cl_X\big(f^{-1}(int_Y(G))\big)$ for each $G\in\mathcal{RC}(Y)$,
and define $h_G(x)=d(x,F_G)$. Consider the function
$\rho:Y\times\mathcal{RC}(Y)\to\mathbb R$, $\rho(y,G)=u(h_G)(y)$. We
are going to check that $\rho$ is a $k$-metric on $Y$.

Suppose $G(1), G(2)\in\mathcal{RC}(Y)$ and $G(1)\subset G(2)$. Then
$F_{G(1)}\subset F_{G(2)}$, so $h_{G(2)}\leq h_{G(1)}$.
Consequently, $\rho(y,G(2))\leq\rho(y,G(1))$ for any $y\in Y$. On
the other hand, obviously, $\rho(y,G)$ is  continuous with respect
to $y$ for every $G\in\mathcal{RC}(Y)$. Hence, $\rho$ satisfies
conditions $(K2)$ and $(K3)$.

Suppose $G\in\mathcal{RC}(Y)$. Then  $s^*\big(T(y)\big)\subset
f^{-1}(y)\subset F_G$ for every $y\in int_Y(G)$, where $T\colon Y\to
P_c^*(X)$ is the associated map to $f$ generated by $u$.
Consequently, $h_G|s^*\big(T(y)\big)=0$ which implies $u(h_G)(y)=0$,
$y\in int_Y(G)$. On the other hand, if $y\not\in G$, then
$s^*\big(T(y)\big)\cap F_G=\varnothing$ and $h_G(x)>0$ for all $x\in
s^*\big(T(y)\big)$. Since $u(h_G)(y)\geq\inf\{h_G(x):x\in
s^*\big(T(y)\big)\}$ (recall that $u$ is an averaging operator for
$f$), $u(h_G)(y)>0$. Hence, $u(h_G)(y)=\rho(y,G)=0$ iff $y\in G$, so
$\rho$ satisfies condition $(K1)$.

 To check condition $(K4)$, suppose $\{G(\alpha):\alpha\in A\}\subset\mathcal{RC}(Y)$ is
 an increasing linearly ordered by inclusion family and
 $G=cl_Y\big(\cup\{G(\alpha):\alpha\in A\}\big)$. Using that
 $f$ satisfies condition (s), we have $F_G=cl_X\big(\cup\{F_{G(\alpha)}:\alpha\in
 A\}\big)$.  Since $\{F_{G(\alpha)}:\alpha\in A\}$ is also increasing and linearly ordered by inclusion,
 according to condition $(K4)$,
 $h_G(x)=\inf\{h_{G(\alpha)}(x):\alpha\in A\}$ for every $x\in X$.
 Let $y\in Y$ and $\epsilon>0$. Then for every $x\in X$ there exists
 $\alpha_x\in A$ such that $h_{G(\alpha_x)}(x)<h_G(x)+\epsilon$.
 Choose a neighborhood $V(x)$ of $x$ in $X$ such that
 $h_{G(\alpha_x)}(z)<h_G(z)+\epsilon$ for all $z\in V(x)$. Since
 $s^*\big(T(y)\big)$ is compact, it can be covered by finitely many
 $V(x(i))$, $i=1,..,n$, with $x(i)\in s^*\big(T(y)\big)$. Let
 $\beta=\max\{\alpha_{x(i)}:i\leq n\}$. Then
 $h_{G(\beta)}(x)<h_G(x)+ \epsilon$ for all $x\in
 s^*\big(T(y)\big)$. The last equality yields
 $\rho(y,G(\beta))\leq\rho(y,G)+\epsilon$ because
 $u(h_{G(\beta)})(y)$ and $u(h_G)(y)$ depend only on the
 restrictions $h_{G(\beta)}|s^*\big(T(y)\big)$ and $h_G|s^*\big(T(y)\big)$, respectively.
 Thus, $\inf\{\rho(y,G(\alpha)):\alpha\in A\}\leq\rho(y,G)$.
 The inequality $\rho(y,G)\leq\inf\{\rho(y,G(\alpha)):\alpha\in A\}$
 is obvious because $G$ contains each $G(\alpha)$, so
 $\rho$ satisfies condition $(K4)$. Therefore, $Y$ is
 $k$-metrizable.
\end{proof}

 Next corollary provides a positive answer to a question of
 Shchepin \cite{sc3}.

\begin{cor}
Every $AE(0)$-space is $k$-metrizable.
\end{cor}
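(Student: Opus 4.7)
The plan will be to apply Theorem~5.4 to a carefully constructed weakly Milyutin surjection $q\colon Z\to Y$ from a $k$-metrizable space $Z$. First, I would $C$-embed $Y$ into $\mathbb{R}^A$ (for instance with $A=C(Y)$), and by Corollary~3.10 take a $0$-dimensional product $Z$ of metrizable spaces together with a perfect $0$-invertible Milyutin map $\phi\colon Z\to\mathbb{R}^A$; let $g\colon\mathbb{R}^A\to P_c(Z)$ be the associated map, so that $s(g(x))\subset\phi^{-1}(x)$ for every $x\in\mathbb{R}^A$. Since $Z$ is a product of metric spaces it is $k$-metrizable. Because $Y$ is $C$-embedded in $\mathbb{R}^A$, any $h\in C(Y)$ extends to some $\widetilde{h}\in C(\mathbb{R}^A)$, and then $\widetilde{h}\circ\phi$ extends $h\circ\bigl(\phi|\phi^{-1}(Y)\bigr)$ to $Z$; hence $\phi|\phi^{-1}(Y)\colon\phi^{-1}(Y)\to Y$ is $Z$-normal. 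Using $Y\in AE(0)$ and $\dim Z=0$, I would extend this map to a continuous $q\colon Z\to Y$.

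Next I would verify that $q$ is a Milyutin map. The restriction $T'=g|_Y\colon Y\to P_c(Z)$ satisfies $s(T'(y))=s(g(y))\subset\phi^{-1}(y)\subset q^{-1}(y)$ for every $y\in Y$: the last inclusion holds because $q|\phi^{-1}(Y)=\phi|\phi^{-1}(Y)$, so every $z\in\phi^{-1}(y)$ satisfies $q(z)=\phi(z)=y$. Thus $T'$ is a continuous map associated with $q$, whence $q$ is Milyutin (in particular weakly Milyutin), and the corresponding regular averaging operator has compact supports.

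The main obstacle will be verifying condition~(s) of Theorem~5.4 for $q$, namely $cl_Z(q^{-1}(U))=q^{-1}(cl_Y(U))$ for every open $U\subset Y$. The inclusion $\subseteq$ is automatic from continuity of $q$, but the reverse inclusion is not supplied by a bare $AE(0)$-extension, since $q$ need not be open or skeletal a priori. The plan is to arrange the choice of $\phi$ in Corollary~3.10 (and hence of $q$) so that $q$ is open; granting this, condition~(s) follows immediately, because for $z\in q^{-1}(cl_Y(U))$ and any open neighbourhood $V$ of $z$, the image $q(V)$ is open in $Y$ and contains $q(z)\in cl_Y(U)$, so it meets $U$, whence $V$ meets $q^{-1}(U)$. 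With (s) in hand, Theorem~5.4 applied to the Milyutin surjection $q\colon Z\to Y$ from the $k$-metrizable space $Z$ delivers that $Y$ is $k$-metrizable, answering Shchepin's question affirmatively.
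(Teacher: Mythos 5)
Your construction up to the point of producing a Milyutin surjection $q\colon Z\to Y$ from a $k$-metrizable ($0$-dimensional product of metrizable spaces) domain is sound: the restriction $T'=g|Y$ is indeed a map associated with $q$, since $s(g(y))\subset\phi^{-1}(y)\subset q^{-1}(y)$. But the step you yourself flag as the main obstacle is a genuine gap, and your proposed fix does not close it. You write that you will ``arrange the choice of $\phi$ \dots so that $q$ is open,'' but $q$ is obtained by extending the $Z$-normal map $\phi|\phi^{-1}(Y)$ over all of $Z$ using only the $AE(0)$ property of $Y$; such an extension carries no openness information whatsoever. Even if $\phi$ (hence $\phi|\phi^{-1}(Y)$) were open, the extension is free to collapse large pieces of $Z-\phi^{-1}(Y)$ onto single points of $Y$, destroying openness and, with it, condition (s). Nothing in Corollary 3.10 or in the $AE(0)$ hypothesis lets you control the behaviour of $q$ off $\phi^{-1}(Y)$, so condition (s) remains unverified and Theorem 5.4 cannot be applied in the form you need.

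The paper takes a different, shorter route precisely to get condition (s) for free: by a theorem of Chigogidze there is a surjective $0$-soft map $f\colon\mathbb N^\tau\to X$ (with $\tau$ the weight of $X$), and a second result of Chigogidze says that every $0$-soft map between $AE(0)$-spaces is functionally open, which yields (s). That $f$ is Milyutin then comes from Proposition 3.12 (every $0$-soft map is Milyutin), and $\mathbb N^\tau$ is $k$-metrizable as a product of metrizable spaces, so Theorem 5.4 applies. If you want to salvage your approach, you would need to replace the bare $AE(0)$-extension of $\phi|\phi^{-1}(Y)$ by a map with some softness or (functional) openness built in --- which is essentially what the $0$-soft surjection from $\mathbb N^\tau$ provides.
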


\begin{proof}
Let $X$ be an $AE(0)$-space of weight $\tau$. By \cite[Theorem
4]{ch2}, there exists a surjective $0$-soft map $f\colon\mathbb
N^\tau\to X$. Since $\mathbb N^\tau\in AE(0)$ (as a product of
$AE(0)$-space) and every $0$-soft map between $AE(0)$-spaces is
functionally open \cite[Theorem 1.15]{ch2}, $f$ satisfies condition
(s) from the previous theorem. On the other hand, $\mathbb N^\tau$
is $k$-metrizable as a product of metrizable spaces \cite[Theorem
15]{sc1}. Hence, the proof follows from Proposition 3.12 and Theorem
5.4.
\end{proof}


\bigskip

\end{document}